  \def\mcolon{\! : \!}
\newcommand\myrestriction{\mathord\restriction}
\def\mr#1{\myrestriction_{#1}}
\newbox\smilebox
\newbox\anchorbox
\newbox\noanchorbox
\newbox\tempbox
\def\anchor{\hbox{\vtop{
           \hbox to \wd\smilebox{\hfil\vrule width.4pt height7pt depth1pt\hfil}
           \vskip  -11.5truept
           \hbox to \wd\smilebox{\hfil$\smile$\hfil}}}}
\def\noanchor{\hbox{\vtop{
           \hbox to \wd\anchorbox{\hfil\anchor\hfil}
           \vskip -14truept
           \hbox to \wd\anchorbox{\hfil/\hfil}}}}
\def\fg#1#2#3{\setbox\tempbox=\hbox{$\scriptstyle{#2}$}
\ifnum\wd\anchorbox>\wd\tempbox\dimen255=\wd\anchorbox
\else\dimen255=\wd\tempbox\fi
{#1\,\vtop{\hbox to \dimen255{\hfil\anchor\hfil}
           \vskip -6truept
           \hbox to \dimen255{\hfil$\scriptstyle{#2}$\hfil}}
           \,#3}}
\def\nfg#1#2#3{\setbox\tempbox=\hbox{$\scriptstyle{#2}$}
\ifnum\wd\noanchorbox>\wd\tempbox\dimen255=\wd\noanchorbox
\else\dimen255=\wd\tempbox\fi
{#1\,\vtop{\hbox to \dimen255{\hfil\noanchor\hfil}
           \vskip -6truept
           \hbox to \dimen255{\hfil$\scriptstyle{#2}$\hfil}}
           \,#3}}
\def\north#1#2{#1\,
\hbox{$\bot$\llap {\hbox to\wd1 {\hfil $/$\hfil}}}
\,#2}
\def\nao#1#2#3{#1\  \hbox{\vtop{
\baselineskip=4pt
\hbox{$\bot$\llap {\hbox to\wd1 {\hfil $/$\hfil}}
\hskip .05em \llap{\hbox{$^{\scriptscriptstyle{a}}$}}}\hbox{$\scriptstyle
{#2}$}}}\, #3}
\def\abar{\overline{a}}
\def\bbar{\overline{b}}
\def\cbar{\overline{c}}
\def\dbar{\overline{d}}
\def\ebar{\overline{e}}
\def\gbar{\overline{g}}
\def\hbar{\overline{h}}
\def\wbar{\overline{w}}
\def\xbar{\overline{x}}
\def\ybar{\overline{y}}
\def\zbar{\overline{z}}
\def\pcl{{\rm acl}}
\def\tp{{\rm tp}}
\def\A{{\cal A}}
\def\B{{\cal B}}
\def\CC{{\cal C}}
\def\D{{\cal D}}
\def\E{{\cal E}}
\def\FF{{\bf F}}
\def\F{{\cal F}}
\def\FF{{\bf F}}
\def\II{{\bf I}}
\def\L{{L_{{\rm ord}}}}
\def\SS{{\cal  S}}
\def\P{{\cal P}}
\def\Q{{\mathbb Q}}
\def\QQ{{\mathbb Q}}
\def\Z{{\mathbb Z}}
\def\tp{{\rm tp}}
\def\dom{{\rm dom}}
\def\pcl{{\rm pcl}}
\def\Fa0{{\FF^a_{\aleph_0}}}
\def\<{\langle}
\def\>{\rangle}
\def\P{{\bf P}}
\def\ee{{\bf e}}
\def\At{{\bf At_T}}
\def\pcl{{\rm pcl}}
\def\rk{{\rm rk}}
\def\Gdot{{\bf \tilde{G}}}
\def\phi{\varphi}
\def\rk{{\rm rk}}
\newtheorem{Theorem}{Theorem}[section]
\newtheorem{Proposition}[Theorem]{Proposition}
\newtheorem{Definition}[Theorem]{Definition}
\newtheorem{Remark}[Theorem]{Remark}
\newtheorem{Example}[Theorem]{Example}
\newtheorem{Lemma}[Theorem]{Lemma}
\newtheorem{Corollary}[Theorem]{Corollary}
\newtheorem{Fact}[Theorem]{Fact}
\newtheorem{Data}[Theorem]{Data}
\newtheorem{Construction}[Theorem]{Construction}
\newtheorem{Claim}[Theorem]{Claim}
\begin{document}

\title{An analogue of $U$-rank for atomic classes}

\author{John Baldwin\thanks{ Research partially supported by Simons travel grant G3535.} \\University of Illinois, Chicago\and  Chris Laskowski\\University of Maryland\thanks{Partially supported
by NSF grants DMS-1855789 and DMS-2154101.}\and \\ Saharon Shelah\\Hebrew University\thanks{NSF-BSF 2021: grant with M. Malliaris, NSF 2051825, BSF 3013005232 (2021-10-2026-09)
Rutgers 2018 DMS 1833363: NSF DMS Rutgers visitor program (PI S. Thomas) (2018-2022).
Publication 1183 of the  Shelah archive.}}

\date{\today}

\maketitle

\begin{abstract}
For a countable, complete, first-order theory $T$, we study $\At$, the class of atomic models of $T$.
We develop an analogue of $U$-rank and prove two results.  On one hand, if some $\tp(d/a)$ is not ranked,
then there are $2^{\aleph_1}$ non-isomorphic models in $\At$ of size $\aleph_1$.  On the other hand, if all types
have finite rank, then the rank is fully additive and 
every finite tuple is dominated by an independent set of realizations of pseudo-minimal types.
\end{abstract}

For a countable, complete first order theory $T$, a model $M$ is {\em atomic} if $\tp(a)$ is principal, i.e., is generated by a complete formula for every finite tuple $a$ from $M$.
In  this paper, we continue our investigations of dichotomies among classes $\At$ of atomic models of a countable, complete, first-order theory $T$.  
One reason for studying such classes relates to complete sentences of $L_{\omega_1,\omega}$.
It is well known, see e.g., \cite[\S 6]{Baldwincatmon} that for every complete $L_{\omega_1,\omega}$-sentence $\Phi$, there is a complete first-order theory $T$ in a possibly larger countable
language such that the reducts of models of $\Phi$ 
are in $1$-$1$ correspondence with the atomic  models of $T$.

We wish to develop a classification theory for atomic classes $\At$ akin to the work of the third author concerning $Mod(T)$ for complete, first order $T$.
In the first-order context, a fundamental dividing line is {\em superstability}.  The third author proved that if $T$ is unsuperstable, then $Mod(T)$ contains $2^\kappa$ non-isomorphic
models of size $\kappa$ for each uncountable cardinal $\kappa$.  On the other hand, if $T$ is superstable, then models of $T$ admit a desirable independence relation, non-forking.
From this, one can measure the forking complexity of a type by assigning ranks to the space of types, e.g., $R^\infty(p)$ or
$U(p)$.  These ranks allow one to prove structural results for $Mod(T)$ by way of inductive arguments on the space of types.

When one is only considering the class $\At$ of atomic models of $T$, the usual dividing lines are not relevant and the test questions need to be altered.
It is notable that in the context of atomic models, even (first order) stability is not relevant.  See, for example, Example~\ref{ranked}.  
There, $T=Th(N)$ is not stable in the first order context, yet $\At$ has a unique atomic model in every infinite cardinality.  
Worse, as the Upward L\"owenheim-Skolem theorem can fail for atomic models, asking for many atomic models in all uncountable cardinalities  may well be meaningless; e.g. if there are models only up to $\aleph_1$.
However, by classical results of Vaught, an atomic model of size $\aleph_1$ exists if and only if the (unique) countable atomic model is not minimal.
Thus, it is natural to call an atomic class $\At$ {\em unstructured} if it contains $2^{\aleph_1}$ non-isomorphic atomic models, each of size $\aleph_1$,
and then to ask what effect does `structured' have on its countable models. 

In this paper, we  introduce and develop a rank, $\rk(d/a)$, on all finite tuples $d,a$ from a fixed countable atomic model $N$ and with Theorem~\ref{bigone} we prove that
if if $\At$ is structured (fewer than $2^{\aleph_1}$ non-isomorphic atomic models of size $\aleph_1$) then $\rk(d/a)$ exists for all $d,a\subseteq N$.  
Our rank $\rk$ is similar to $U$-rank, which, in the first order context, is the foundation rank on the space of complete types, which are tree ordered by the relation $p<q$ iff
$q$ is a forking extension of $p$.   
In first order, a theory is superstable if and only if $U(p)$ is ordinal valued for every complete type $p$. 
 
Our rank can also be viewed as a foundation rank of types $\tp(d/a)$ with respect to the relation of `an extension making some element pseudo-algebraic.' 
However, because $\rk(d/a)$ is determined by $\tp(d/a)$, which is generated by a complete formula, we additionally get that our rank is continuous, which is not generally true
of $U$-rank in first order, superstable theories.    
For such theories, an alternate rank is  $R^\infty$-rank, but it is a rank on formulas as opposed to types.  Its natural generalization to types, given by $R^\infty(p)=\min\{R^\infty(\theta):\theta\in p\}$ is only semi-continuous.   
It is pleasing that our rank  possesses both of the desirable properties of $R^\infty$-rank and $U$-rank -- it is a rank on types that is fully  continuous and  is a foundation rank
of a natural extendibility property.

The new rank is defined in Section~2, with the salient features developed in Sections 3 and 4. 
The main results are stated explicitly in Theorem~\ref{summary}, but here is a summary.  
We prove semantic equivalents of the rank in terms of the existence of certain chains, which shows that $\rk(d/a)=0$ and $\rk(d/a)=1$ are equivalent to being pseudo-algebraic and
pseudo-minimal, respectively.    
Continuing upward, we see that `having rank $n<\omega$ is extendible,' i.e., if $\rk(d/a)=n$, then for every type $q\in S_{at}(a)$ there is some $b$ realizing $q$ with $\rk(d/ab)=n$.
This result implies  that among finitely ranked atomic classes, the rank is fully additive.  
Using this additivity, 
we conclude that any finite set is dominated in some sense by an independent sequence of psuedo-minimal types.    Collectively, these results show that finitely ranked atomic classes $\At$ are similar to finitely ranked superstable theories in the first order context.  
Finally, in Section~5,  we prove our main result,  Theorem~\ref{bigone}.  
Its rather lengthy argument shows   that the assumption of `few atomic models in $\aleph_1$' implies that 
the class $\At$ is ranked.   Even though the theorem is proved  in ZFC, heavy use is made of certain forcing constructions.

\section{Context} En route to proving a first order theory $T$ is categorical in $\aleph_1$ is categorical in all uncountable
cardinalities, Morley  \cite{Morley65}, Morley exploited the upwards L\"{o}wenheim-Skolem theorem.
He applied the Erd\"{o}s-Rado theorem to deduce that the unique model in $\aleph_1$ can be represented as
an Ehrenfeucht-Mostowski over the cardinal $\aleph_1$ and concluded that the theory admitted only countably
many types over a countable set. He called this property {\em totally transcendental}, now usually called $\omega$-stable. Shelah later extended this result  under the set theoretic hypothesis
$2^{\aleph_n}< 2^{\aleph_{n+1}}$, by showing  a complete $L_{\omega_1,\omega}$-sentence that is categorical  in all infinite cardinals below
$\aleph_\omega$  is {\em excellent} and consequently, has arbitrarily large models  and is categorical in all uncountable
cardinalities.

As described in \cite{BLSmanymod}, \cite{Sh87a}, and \cite[Chapter 6]{Baldwincatmon}, the models of such a sentence can be thought of as the atomic models (all finite sequences in each model
realize a principal type) of a first order theory; we work here with that assumption.

\cite{Sh87a,Shaec1} proved (See also \cite[Chapter 17]{Baldwincatmon}):
\begin{Fact}[Martin's Axiom] There is a sentence $\psi$ in
$L(Q)$ with the joint embedding property that is
$\kappa$-categorical for every $\kappa < 2^{\aleph_0}$. In ZFC one
can prove $\psi$ is  $\aleph_0$-categorical but the associated AEC
has neither the amalgamation property in $\aleph_0$ nor is
$\omega$-stable.
\end{Fact}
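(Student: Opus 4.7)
The plan is to construct $\psi$ around an almost-disjoint-family structure whose model theory is sensitive to Martin's Axiom. Work in a two-sorted language with sorts $U, V$, a binary relation $E \subseteq U \times V$, and symbols making $U$ into a standard copy of $(\mathbb{N}, S, 0)$. Let $\psi$ assert: \textit{(i)} $\neg Q x\, U(x)$, so $U$ is at most countable; \textit{(ii)} the usual first-order axioms pinning down the successor structure on $U$; \textit{(iii)} each fibre $v^E = \{u : E(u,v)\}$ is infinite; \textit{(iv)} distinct fibres have finite intersection; and \textit{(v)} a coherence/maximality clause controlling which infinite subsets of $U$ may appear as fibres. The joint embedding property would follow by identifying the (rigid) $U$-sorts of two models and merging the $V$-sorts after a generic permutation keeping the combined family almost-disjoint.

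For $\aleph_0$-categoricity in ZFC, clause \textit{(i)} makes $U$ countable and \textit{(ii)} makes it unique, so a standard back-and-forth on $V$ using \textit{(iii)}--\textit{(v)} identifies the two countable almost-disjoint families. Under Martin's Axiom, for $\aleph_0 < \kappa < 2^{\aleph_0}$, I would run a transfinite back-and-forth of length $\kappa$ between two models of size $\kappa$; at each step, MA applied to a ccc poset of finite approximations produces the next point of a partial isomorphism, meeting the $\kappa$-many dense sets encoding extensibility, exhaustion, and preservation of the almost-disjoint structure.

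In ZFC, amalgamation at $\aleph_0$ would fail as follows: starting from the unique countable $M$ and choosing an infinite $X \subseteq U(M)$ generically with respect to the existing fibres, one builds one extension with a new fibre almost contained in $X$ and another with a new fibre almost disjoint from $X$; clause \textit{(iv)} then blocks any common extension. Failure of $\omega$-stability would follow immediately from \textit{(iv)}: over a countable model with countably many fibres, a new $V$-element has continuum-many possible almost-disjointness patterns, so the type space has size $2^{\aleph_0}$. The principal obstacle is the MA back-and-forth step: one must design the dense sets so that a generic filter provides injectivity, surjectivity, and preservation of clause \textit{(v)} all at once, while verifying that the approximation poset is ccc. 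The tension is that \textit{(v)} restricts which almost-disjoint families can occur as models, which limits which partial maps can be extended; balancing flexibility against the constraints of \textit{(v)} is where the real work lies.
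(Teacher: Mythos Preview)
The paper does not prove this Fact; it is quoted from \cite{Sh87a,Shaec1} (with a pointer to \cite[Chapter 17]{Baldwincatmon}) as background, so there is no paper-side argument to compare against. Evaluated on its own terms, your overall strategy---encode an almost-disjoint family on a countable set and run an MA-powered ccc back-and-forth in cardinalities below $2^{\aleph_0}$---is indeed the shape of Shelah's example. But the sketch has two genuine gaps.

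First, clause (v) is never actually stated. You call it ``a coherence/maximality clause controlling which infinite subsets of $U$ may appear as fibres'' and later concede that balancing it against flexibility ``is where the real work lies.'' That is correct, and it means the proposal has not yet done the real work: each of your claims ($\aleph_0$-categoricity, failure of amalgamation, the MA back-and-forth, even the choice of the ccc poset) depends entirely on what (v) says, and nothing has been committed to.

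Second, pinning $U$ down as a rigid copy of $(\mathbb N,S,0)$ is both unavailable in $L(Q)$ and counterproductive. Unavailable: $L(Q)$ is first-order logic plus ``there exist uncountably many''; $\neg Qx\,U(x)$ together with any first-order successor axioms still admits countable nonstandard models, so $U$ is not determined up to isomorphism and your clause (ii) does not do what you claim. Counterproductive: if $U$ \emph{were} rigid, any isomorphism between models would be the identity on $U$, hence two models would be isomorphic exactly when their families of fibres were literally the same family of subsets of $\mathbb N$; your ``standard back-and-forth on $V$'' then has no freedom, and no reasonable (v) will force all countable models to have setwise identical fibres. In Shelah's actual construction the countable sort carries many automorphisms, and the delicate point is precisely how ``finite intersection'' and the maximality condition are expressed with only $Q$ available---exactly the content you have deferred to the unspecified clause (v).
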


$L(Q)$ is first order logic extended by the quantifier, `there exists countably many'. Shelah
proposed a variant to get such a counterexample in $L_{\omega_1,\omega}$;  however, there was a gap.
This article is part of a more than 20 year effort to fill the gap or, mostly, to prove no such example 
exists by showing $\aleph_1$-categoricity implies $\omega$-stability; the existence of a model in $\beth_1^+$
is state of the art in that direction \cite{BLSwhendoes}. This particular paper is in the midst of those 
alternatives: establishing the existence of a rank analogous to the $U$-rank in superstable theories from
the assumption that there are few (atomic)  models in $\aleph_1$.

%

The hope was to show that any ranked sentence has a model in the continuum, but sadly that goal remains unattained.

\section{The new rank and statements of the main results}

Throughout this section, fix a complete theory $T$ in a countable language and assume there is a countable  atomic model $N$ that is not minimal.
So long as we restrict to complete types
 of finite tuples over finite subsets of $N$, $N$ serves as a `monster model'.   For every
finite $a$, $N$ realizes all types over $a$,
 and moreover $N$ is homogeneous -- if $a,b,c$ are finite tuples from $N$ with $\tp(a/c)=\tp(b/c)$, then there is an automorphism $\sigma\in Aut(N)$
fixing $c$ pointwise
with $\sigma(a)=b$.

\medskip

 {\bf Throughout Sections 2-4,  we assume all finite tuples are from this  countable, atomic model $N$.}
 
\medskip

We recall a definition from \cite[\S 2]{BLSmanymod}.

\begin{Definition} \label{pcl} {\em  We say {\em $d$ is in the pseudo-closure of $a$}, $d\in\pcl(a)$, if   every model $M\preceq N$ containing $a$ also contains $d$.
For a finite tuple $a$, $\pcl(a)=\{d\in N:d\in\pcl(a)\}$ and for
$A\subseteq N$ any set, $\pcl(A)=\bigcup\{\pcl(a): a\in A$\  finite$\}$.

A complete type $p\in S(a)$ is {\em pseudo-algebraic} if $d\in\pcl(a)$ for some (equivalently every) $d$ realizing $p$.
}
\end{Definition}

As we are assuming $N$ has a proper elementary substructure, psuedo-closure is not degenerate, i.e., $N\neq\pcl(\emptyset)$.

\begin{Definition}  {\em 
In \cite{BLSmanymod}, a type $p=\tp(d/a)$ is {\em pseudo-minimal} if $d\not\in\pcl(a)$ and for every $b,c$,
if $c\in\pcl(abd)\setminus\pcl(ab)$, then $d\in\pcl(abc)$.

We say that the {\em pseudo-minimal types are dense} if, for every non-pseudo-algebraic $p=\tp(d/a)$,
there is some $a^*$ such that $\tp(d/aa^*)$ is pseudo-minimal.
}
\end{Definition}

\begin{Definition}\label{bfP}  {\em  Let $\P$ denote the set of all types $\tp(d/a)$ for finite tuples $a,d\subseteq N$.   As $N$ is atomic, every $p\in\P$ is principal.

We define a rank
$\rk : \P\rightarrow{\bf ON}\cup\{\infty\}$ by induction on $\alpha$ requiring for any finite sequence $a$
:
\begin{itemize} \item $\rk(d/a) \ge 0$
 \item  $\rk(d/a)
\ge \alpha>0$ if and only if for every $r(y)\in S_{at}(a)$ and for every $\beta<\alpha$, there exist tuples  $a',b,c$ from $N$ such that
\begin{enumerate}
\item  $\tp(a'/a)=r$;
\item  $\rk(\tp(d/aa'bc))\ge\beta$; and
\item  $c\in\pcl(daa'b)\setminus\pcl(aa'b)$.
\end{enumerate}
\item  For an ordinal $\alpha$, $\rk(p)=\alpha$ if $\rk(p)\ge \alpha$, but $\rk(p)\not\ge\alpha+1$.
\item  Call $\At$ {\em ranked} if $\rk(p)$ is ordinal-valued for every $p\in\P$.
\item  Call $\At$ {\em finitely ranked} if $\rk(p)<\omega$ for every $p\in \P$.  
\end{itemize}
}
\end{Definition}

Observe that  $d\in \pcl(a)$ if and only if $\rk(d/a) =0$.  While the ranks are on {\em complete} formulas, not all formulas
have been ranked. To remedy this we can define $\rk(\phi(x,a)) = \sup\{\rk(p): \phi(x,a) \in p \in \P\}$.  
The following example may give some intuition.  
\begin{Example}  \label{ranked}
{\em
Let $L=\{A,B,\pi,\le
\}$ and 
let $N$ be the $L$-structure where $A$ and $B$ partition
	the universe with $B$ infinite,
	$\pi:A\rightarrow B$ is a total surjective
	function and $(\pi^{-1}(b),\le)\cong (\Z,\le)$, with $ a\not\le a'$ whenever $\pi(a)\neq\pi(a')$.  
	Then $N$ is an atomic model of $T=Th(N)$, and any $M\models T$ will be atomic if and only if $(\pi^{-1}(b),\le)\cong (\Z,\le)$ for every $b\in B$.

	Now choose elements $a,b\in N$  such that $\pi(a)=b$.
	Clearly, $a$ is not
algebraic	over $b$ in the classical sense, however they are ``equi-pseudo-algebraic" i.e., $b\in\pcl(a)$ (trivially) and $a\in\pcl(b)$.

In terms of ranks,
note that  $\pcl(\emptyset) = \emptyset$, so for any $e\in N$, $\rk(e/\emptyset) \geq 1$.
For $a,b$ with $\pi(a)=b$, 
 $rk(a/\emptyset)=\rk(b/\emptyset)=1 $ and both of these types are pseudo-minimal. 
However, $\rk(a/b)=\rk(b/a)=0$.     
Here, $\At$ is categorical in every infinite power and is finitely ranked.
To give an example of a structure of rank 2, add an equivalence relation $E$ and insist that each class is a model
of the current example. 
}

\end{Example}

We will prove four main results about this rank:

\begin{Theorem}  \label{summary}  Let $T$ be a complete theory in a countable language for which there is an uncountable atomic model.  Then:
\begin{enumerate}
\item  (Proposition~\ref{charinfty}) 
For $p=\tp(d/a)\in\P$, $\rk(p)=\infty$ if and only if there is an infinite sequence of models $M_0\preceq M_1\preceq\dots\preceq N$  with $a\in M_0$
and  tuples
$c_n\in M_{n+1}$ such that $c_n\in\pcl(M_nd)\setminus M_n$ for each $n$.
\item  (Theorem~\ref{bigone}) If $\At$ has $<2^{\aleph_1}$ non-isomorphic atomic models of size $\aleph_1$, then $\At$ is ranked.
\item  If $\At$ is ranked, then: (Corollaries~\ref{dense},\ref{extension}, and Proposition~\ref{additivity})
\begin{enumerate}
\item  The pseudo-minimal types are dense;
\item  If $\rk(d/a)=n<\omega$ then there is a model $M\supseteq a$ with $\rk(d/M)=n$;
\item  Among types of finite rank, the rank is fully additive, i.e., $\rk(de/a)=\rk(d/ea)+\rk(e/a)$ whenever $\rk(de/a)<\omega$
\end{enumerate}
\item (Proposition~\ref{domexistence})
 If $\At$ is finitely ranked, then for every pseudo-minimal $\theta(x)$, for every independent tuple $\cbar\in\theta(N)^n$ and every finite $b\subseteq N$,
there is a finite $h\subseteq N$ for which $\cbar$ $\theta$-dominates $b$ over $h$ (Definition~\ref{domdef}).

\end{enumerate}
\end{Theorem}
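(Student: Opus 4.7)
The plan is a rank-decrement iteration driven by the finitely-ranked hypothesis and full additivity. Parsing the conclusion in its basic form, I read $\cbar$ \emph{$\theta$-dominates} $b$ over $h$ as meaning that no realization of $\theta$ lies in $\pcl(bh) \setminus \pcl(\cbar h)$; a stronger reading in terms of independent tuples of $\theta$-realizations should follow by iterating pseudo-minimality once the single-element version is proved. The well-founded invariant will be $\rk(b/\cbar h)$, which by hypothesis is a natural number for every finite $h$.

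I would construct the required $h$ as the union of a finite chain $h_0 \subseteq h_1 \subseteq \cdots$, starting with $h_0 := \emptyset$. At stage $i$, if $\cbar$ already $\theta$-dominates $b$ over $h_i$, stop and output $h := h_i$. Otherwise choose a witness $e_i \models \theta$ with $e_i \in \pcl(b h_i) \setminus \pcl(\cbar h_i)$, and set $h_{i+1} := h_i \cup \{e_i\}$. Since $\theta$ is pseudo-minimal and $e_i \notin \pcl(\cbar h_i)$, the type $\tp(e_i/\cbar h_i)$ is pseudo-minimal and so $\rk(e_i/\cbar h_i) = 1$; since $e_i \in \pcl(b \cbar h_i)$ we have $\rk(e_i/b\cbar h_i) = 0$. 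Applying full additivity (Proposition~\ref{additivity}) in both orderings of the pair $(b, e_i)$ yields
\[
\rk(b/\cbar h_i e_i) + 1 \;=\; \rk(b e_i /\cbar h_i) \;=\; 0 + \rk(b/\cbar h_i),
\]
so $\rk(b/\cbar h_{i+1}) = \rk(b/\cbar h_i) - 1$. Hence the process terminates in at most $\rk(b/\cbar)$ stages and outputs a finite $h$ of the required form.

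The main obstacle I anticipate is confirming that pseudo-minimality of $\theta$ transfers freely to realizations over growing bases --- i.e.\ whenever $e \models \theta$ and $e \notin \pcl(A)$, the type $\tp(e/A)$ is pseudo-minimal. This is what licenses the reading $\rk(e_i/\cbar h_i) = 1$ at every stage; it should be a direct consequence of the definition of pseudo-minimality applied to $\theta$, but it is invoked repeatedly and needs a careful statement. A secondary concern is reconciling the informal reading of $\theta$-domination above with whatever Definition~\ref{domdef} formalizes: if that definition quantifies over independent $n$-tuples of $\theta$-realizations rather than single elements, one promotes the single-element statement by running the same rank-decrement on tuples, invoking density of pseudo-minimal types (Corollary~\ref{dense}) if further decomposition is needed. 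Neither of these issues alters the overall shape of the argument.
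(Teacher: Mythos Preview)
Your reading of $\theta$-domination is too weak, and this is the essential gap. Definition~\ref{domdef} requires that for \emph{every} finite $h^*\supseteq h$ and \emph{every} tuple $\cbar^*\subseteq\theta(N)$, if $\cbar\cbar^*$ is independent over $h^*$ then $\cbar^*$ remains independent over $h^*\cbar b$. Your termination condition---no single $e\models\theta$ lies in $\pcl(bh)\setminus\pcl(\cbar h)$---speaks only about the fixed base $h$ (and should read $\pcl(b\cbar h)$ rather than $\pcl(bh)$, else even the case $h^*=h$, $\cbar^*=(e)$ is not covered). Nothing in your iteration controls what happens when the base is enlarged: there may be $h^*\supsetneq h$ with $\cbar$ still independent over $h^*$ and a fresh $e\in\pcl(b\cbar h^*)\setminus\pcl(\cbar h^*)$ that your process never encounters, since the witnesses you harvest are restricted to realizations of $\theta$ over the current $h_i$. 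Your suggested promotion ``from single elements to tuples'' does not address this; the problem is the universal quantifier over $h^*$, not the length of $\cbar^*$.

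The paper runs exactly the additivity computation you wrote down, but with a different choice of $h$: among all finite $h$ (containing the given base) with $\cbar$ independent over $h$, take one for which $\rk(b/h\cbar)$ is \emph{globally minimal}. Minimality together with monotonicity then force $\rk(b/h'\cbar'\cbar)=\rk(b/h'\cbar)$ for every $h'\supseteq h$ and every $\cbar'$ with $\cbar'\cbar$ independent over $h'$, and from there two applications of Corollary~\ref{add} give $\rk(\cbar'/h'\cbar b)=\rk(\cbar'/h'\cbar)$, which is precisely domination. So your rank mechanism is the right engine; the missing idea is to replace the local stopping rule ``no $\theta$-witness at $h$'' by the global condition ``$\rk(b/h\cbar)$ is minimal,'' which is what secures the quantifier over all $h^*\supseteq h$.
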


Note that the main result from \cite{BLSmanymod}, that failure of density of pseudo-minimal types implies  the existence of $2^{\aleph_1}$
non-isomorphic atomic models of size $\aleph_1$, follows immediately from Theorem~\ref{summary}.

\section{Properties of the rank, chains, and additivity}
 We first record some properties of the new rank. 
We begin with two easy monotonicity results.

\begin{Lemma} \label{monotonebase}  Suppose $d_0\subseteq d$ and $a\subseteq a^*$ are from $N$.
Then:
\begin{enumerate}
\item  $\rk(d_0/a)\le\rk(d/a)$; and
\item  $\rk(d/a^*)\le \rk(d/a)$.
\end{enumerate}
\end{Lemma}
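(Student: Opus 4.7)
I plan to prove both parts by induction on $\alpha$, showing that $\rk(d/a)\ge\alpha$ whenever the corresponding hypothesis $\rk(d_0/a)\ge\alpha$ (for part (1)) or $\rk(d/a^*)\ge\alpha$ (for part (2)) holds.  The base case $\alpha=0$ is immediate from the definition, and limit stages carry over with no work, so the content lies in the successor step.

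For part (1), fix $r\in S_{at}(a)$ and $\beta<\alpha$, and extract witnesses $a',b,c$ from the hypothesis $\rk(d_0/a)\ge\alpha$.  The clause $\tp(a'/a)=r$ is copied verbatim.  Since $d_0\subseteq d$, we have $\pcl(d_0aa'b)\subseteq\pcl(daa'b)$, and hence $c\in\pcl(daa'b)\setminus\pcl(aa'b)$ follows from the corresponding statement for $d_0$.  The remaining clause, $\rk(d_0/aa'bc)\ge\beta$, upgrades to $\rk(d/aa'bc)\ge\beta$ by applying the inductive hypothesis at rank $\beta$ with the enlarged base $aa'bc$.

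For part (2), a direct verification suffices; no separate induction is needed beyond that already in (1).  Given $r\in S_{at}(a)$ and $\beta<\alpha$, extend $r$ to some $r^*\in S_{at}(a^*)$, possible because $N$ is atomic and realizes every type over a finite set, and apply $\rk(d/a^*)\ge\alpha$ to $r^*$ and $\beta$ to produce witnesses $a',b,c$.  Replace $b$ by the finite tuple $b':=a^*b$.  Then $\tp(a'/a)=r$ by restriction, and because the set $aa'b'c$ coincides with $a^*a'bc$, both the rank clause $\rk(d/aa'b'c)\ge\beta$ and the pseudo-closure clause $c\in\pcl(daa'b')\setminus\pcl(aa'b')$ transfer immediately from what was produced over $a^*$.

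The whole argument is essentially bookkeeping.  The only subtleties are recognizing in part (1) that the inductive hypothesis must be invoked at the enlarged base $aa'bc$ rather than at $a$, and in part (2) that packing $a^*$ into the parameter $b$ converts a witness over $a^*$ into a witness over $a$; I do not anticipate any real obstacle.
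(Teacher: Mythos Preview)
Your proposal is correct and essentially matches the paper's argument.  The paper leaves (1) to the reader; your induction on $\alpha$, using the inductive hypothesis at the enlarged base $aa'bc$, is exactly the intended fill-in.  For (2), the paper does precisely what you do: extend $r\in S_{at}(a)$ to $r^*\in S_{at}(a^*)$ (by choosing a realization $a_0$ of $r$ and setting $r^*=\tp(a_0/a^*)$), pull witnesses $a',b,c$ from $\rk(d/a^*)\ge\alpha$, and absorb $a^*$ into the $b$-slot via $b':=a^*b$, so that $aa'b'c=a^*a'bc$ as sets and both clauses transfer without any induction.  One cosmetic remark: your phrase ``no separate induction is needed beyond that already in (1)'' slightly undersells the situation, since (2) does not invoke (1) at all; the rank clause transfers because the two base sets literally coincide.
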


\begin{proof} (1) is easy and is left to the reader.  For (2), 
we prove that for any ordinal $\alpha$, $\rk(d/a^*)\ge\alpha+1$ implies $\rk(d/a)\ge\alpha+1$, which suffices.
Suppose $\rk(d/a^*)\ge\alpha+1$.  Choose any $r\in S_{at}(a)$ and any realization $a_0$ of $r$.
Let $r^*:=\tp(a_0/a^*)$.  As $\rk(d/a^*)\ge\alpha+1$, choose a realization $a'$ of $r^*$ and $b,c$ such that
$\rk(d/a^*a'bc)\ge\alpha$, $c\in\pcl(a^*a'bd)\setminus\pcl(a^*a'b)$.
Put $b':=a^*b$.  As $a'b'=a'a^*b$ (as sets),  $a',b',c$ satisfy $a'\models r$, $\rk(d/a'bc)\ge\alpha$,  and $c\in\pcl(a'b'd)\setminus\pcl(a'b')$.
Thus, $\rk(d/a)\ge\alpha+1$.
\end{proof}

The following two Lemmas establish that the rank is continuous.

\begin{Lemma}   \label{betaplus}   For any ordinal $\alpha$, if $\rk(d/a)=\alpha$, then for all $\beta<\alpha$ there is $\beta^*$, $\beta\le\beta^*<\alpha$
and $e\subseteq N$ such that $\rk(d/ae)=\beta^*$.
\end{Lemma}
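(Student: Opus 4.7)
The plan is to unwind the rank definition directly, exploiting the gap between $\rk(d/a)\ge\alpha$ and $\rk(d/a)\not\ge\alpha+1$. First I would note that the condition $\rk(d/a)\not\ge\alpha+1$ says: there exists $r_0\in S_{at}(a)$ and some $\beta'<\alpha+1$ such that no triple $(a',b,c)$ with $\tp(a'/a)=r_0$ simultaneously satisfies the pcl-condition and $\rk(d/aa'bc)\ge\beta'$. Since $\rk(d/a)\ge\alpha$ already supplies witnesses for every $r\in S_{at}(a)$ and every $\beta'<\alpha$, the offending $\beta'$ for $r_0$ must be exactly $\alpha$. So one fixes a ``bad'' type $r_0\in S_{at}(a)$ with the property that no $(a',b,c)$ realizing $r_0$ both satisfies $c\in\pcl(daa'b)\setminus\pcl(aa'b)$ and $\rk(d/aa'bc)\ge\alpha$.

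Given any $\beta<\alpha$, I would then apply the hypothesis $\rk(d/a)\ge\alpha$ to the pair $(r_0,\beta)$. This produces tuples $a',b,c$ with $\tp(a'/a)=r_0$, $c\in\pcl(daa'b)\setminus\pcl(aa'b)$, and $\rk(d/aa'bc)\ge\beta$. Setting $e:=a'bc$, monotonicity (Lemma~\ref{monotonebase}(2)) gives $\rk(d/ae)\le\rk(d/a)=\alpha$, while the ``badness'' of $r_0$ rules out $\rk(d/ae)\ge\alpha$, so $\rk(d/ae)<\alpha$. Letting $\beta^*:=\rk(d/ae)$, we have $\beta\le\beta^*<\alpha$, as required.

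The argument works uniformly for limit and successor $\alpha$; the only case to double-check is $\alpha=0$, where the statement is vacuous (no $\beta<0$). I do not anticipate a real obstacle here: the whole proof is a careful bookkeeping of the quantifiers in Definition~\ref{bfP}, and the key insight is simply that the single ``bad'' type $r_0$ witnessing $\rk(d/a)\not\ge\alpha+1$ also serves as the chosen type in the $\rk(d/a)\ge\alpha$ clause for every $\beta<\alpha$, automatically giving extensions of rank strictly between $\beta$ and $\alpha$.
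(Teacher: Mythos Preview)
Your proposal is correct and follows essentially the same route as the paper: isolate a ``bad'' type $r_0$ witnessing $\rk(d/a)\not\ge\alpha+1$, then feed that same $r_0$ together with the given $\beta$ into the clause $\rk(d/a)\ge\alpha$ to produce $e=a'bc$ with $\beta\le\rk(d/ae)<\alpha$. The only cosmetic differences are that the paper skips your intermediate step of arguing $\beta'=\alpha$ (it simply states the badness of $r$ at level $\alpha$ directly) and does not invoke monotonicity, since the badness of $r_0$ already forces $\rk(d/ae)<\alpha$ outright.
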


\begin{proof}  First, since $\rk(d/a)\not\ge \alpha+1$, choose some $r\in S_{at}(a)$ for which there do not exist
$a',b,c$ from $N$ such that $a'$ realizes $r$, $\rk(d/aa'bc)\ge \alpha$ and $c\in\pcl(aa'bd)\setminus\pcl(aa'b)$.
Now  choose $\beta<\alpha$.  By the definition of $\rk(d/a)\ge\alpha$ applied to $\beta$ and the $r$ from above,
there are  $a',b,c$ such that $a'$ realizes $r$, $c\in\pcl(aa'bd)\setminus\pcl(aa'b)$, and $\rk(d/aa'bc)\ge\beta$.
Take $e:=a'bc$ and $\beta^*:=\rk(d/ae)$.
\end{proof}

\begin{Lemma} \label{continuousrank}   For any ordinal $\alpha$, if $\rk(d/a)=\alpha$, then for every $\gamma<\alpha$ there is some $b\subseteq N$ such that
$\rk(d/ab)=\gamma$.
\end{Lemma}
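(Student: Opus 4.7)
The plan is to reduce Lemma \ref{continuousrank} to Lemma \ref{betaplus} by a straightforward transfinite induction on $\alpha = \rk(d/a)$. The intuition is that Lemma \ref{betaplus} already secures a witness whose rank lies somewhere in the window $[\gamma, \alpha)$; all that remains is to ``push down'' that witness to hit $\gamma$ exactly, using the inductive hypothesis together with the monotonicity Lemma \ref{monotonebase}(2) to extend parameters without raising rank.

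More concretely, I would argue as follows. The base case $\alpha = 0$ is vacuous since there is no $\gamma < 0$. For the inductive step, assume the statement holds for every ordinal $\alpha' < \alpha$ and for all tuples in $N$. Fix $d, a$ with $\rk(d/a) = \alpha$ and fix $\gamma < \alpha$. Apply Lemma \ref{betaplus} to the pair $(d,a)$ and to the ordinal $\beta := \gamma$: this yields a tuple $e \subseteq N$ and an ordinal $\beta^*$ with $\gamma \le \beta^* < \alpha$ such that $\rk(d/ae) = \beta^*$. If $\beta^* = \gamma$, then $b := e$ is the desired witness and we are done. Otherwise $\gamma < \beta^* < \alpha$, and the inductive hypothesis applies to $\rk(d/ae) = \beta^*$: we obtain $b' \subseteq N$ with $\rk(d/aeb') = \gamma$. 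Setting $b := eb'$ and viewing $ab$ as $aeb'$, this is the required tuple.

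The step requiring a small sanity check is that the inductive hypothesis is being invoked over the augmented parameter set $ae$ rather than over $a$, but this is legitimate because the hypothesis is universally quantified over the base tuple. Otherwise, nothing delicate happens: Lemma \ref{monotonebase} is not even needed for the conclusion (the new rank lands exactly on $\gamma$ by construction), and continuity of $\rk$ therefore follows immediately. I do not anticipate any real obstacle here; the work has already been done inside Lemma \ref{betaplus}, whose proof used the failure clause in the definition of $\rk(d/a) \not\ge \alpha+1$ to produce a witness strictly below $\alpha$. The present lemma merely iterates that construction finitely or transfinitely many times along a descending sequence of ordinals, and such an iteration terminates precisely because the ordinals below $\alpha$ are well-ordered, which is exactly what the induction on $\alpha$ encodes.
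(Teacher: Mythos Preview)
Your proposal is correct and essentially identical to the paper's own proof: both argue by transfinite induction on $\alpha$, invoke Lemma~\ref{betaplus} with $\beta=\gamma$ to land in $[\gamma,\alpha)$, and then apply the inductive hypothesis over the enlarged base $ae$ to hit $\gamma$ exactly. Your remark that the inductive hypothesis is quantified over all base tuples is exactly the observation needed, and no appeal to Lemma~\ref{monotonebase} is required.
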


\begin{proof}  We prove this by induction on $\alpha$.  For $\alpha=0$ there is nothing to prove, so fix $\alpha>0$ and assume the statement holds for all $\beta<\alpha$.
Choose any $\gamma<\alpha$ and apply Lemma~\ref{betaplus} to get $e\subseteq N$ such that $\rk(d/ae)\in [\gamma,\alpha)$.   If $\rk(d/ae)=\gamma$ we are done.   Otherwise,
apply the inductive hypothesis to $\rk(d/ae)$ to get $b^*$ such that $\rk(d/aeb^*)=\gamma$.  Then $b:=eb^*$ is as required.
\end{proof}

\begin{Proposition}  \label{Prop2}
\begin{enumerate}
\item  There is some countable ordinal $\gamma^*$ such that, for all finite $d,a\subseteq N$, if
$\rk(d/a)\ge\gamma^*$, then $\rk(d/a)=\infty$.
If $\rk(d/a)\ge\omega_1$, then $\rk(d/a)=\infty$.
\item  If $\rk(d/a)=\infty$, then 
\begin{enumerate}
\item  For any $r\in S_{at}(a)$ there is $a'$ realizing $r$ with $\rk(d/aa')=\infty$; and
\item  There are $b^*,c\subseteq N$ with $c\in\pcl(ab^*d)\setminus \pcl(ab^*)$ with $\rk(d/ab^*c)=\infty$.
\end{enumerate}
\end{enumerate}
\end{Proposition}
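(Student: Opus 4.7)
The plan is to establish (1) by a straightforward cardinality count on the type space $\P$, and to derive each half of (2) by contradiction, using the defining clause of $\rk(d/a)\ge\alpha+1$ with $\alpha$ chosen just above a countable supremum of putative ordinal-valued ranks.

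For (1), since $N$ is countable, so is the collection of finite tuples from $N$, hence $\P$ is countable.  The set $\{\rk(p):p\in\P,\ \rk(p)\in{\bf ON}\}$ is therefore a countable set of countable ordinals, whose supremum is some countable ordinal; let $\gamma^*$ be one more than this supremum, so $\gamma^*<\omega_1$.  No ordinal $\ge\gamma^*$ lies in the range of $\rk$, so $\rk(p)\ge\gamma^*$ forces $\rk(p)=\infty$.  The second assertion is immediate from $\gamma^*<\omega_1$.

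For (2)(a), fix $r\in S_{at}(a)$ and suppose toward a contradiction that $\rk(d/aa')<\infty$ for every $a'\in N$ realizing $r$.  By countability of $N$ the set $\{\rk(d/aa'):a'\models r\}$ is a countable set of ordinals, bounded strictly below some countable $\alpha$.  But $\rk(d/a)=\infty$ entails $\rk(d/a)\ge\alpha+1$, so by the definition, applied to this $r$ and $\beta=\alpha$, there exist $a'\models r$ and $b,c\subseteq N$ with $\rk(d/aa'bc)\ge\alpha$ and $c\in\pcl(aa'bd)\setminus\pcl(aa'b)$.  Lemma~\ref{monotonebase}(2) then yields $\rk(d/aa')\ge\rk(d/aa'bc)\ge\alpha$, contradicting the choice of $\alpha$.

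For (2)(b), assume instead that $\rk(d/ab^*c)<\infty$ for every pair $b^*,c\subseteq N$ with $c\in\pcl(ab^*d)\setminus\pcl(ab^*)$.  Again by countability of $N$, these ordinal values are bounded strictly below some countable $\alpha$.  Choose any $r\in S_{at}(a)$ (for instance $\tp(e/a)$ for an $e\in N$) and apply $\rk(d/a)\ge\alpha+1$ with this $r$ and $\beta=\alpha$ to obtain $a'\models r$ and $b,c$ with $\rk(d/aa'bc)\ge\alpha$ and $c\in\pcl(aa'bd)\setminus\pcl(aa'b)$.  Setting $b^*:=a'b$ gives $c\in\pcl(ab^*d)\setminus\pcl(ab^*)$ and $\rk(d/ab^*c)\ge\alpha$, contradicting the bound.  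The only subtlety is that the defining clause for $\rk(d/a)\ge\alpha+1$ quantifies universally over $r\in S_{at}(a)$, which lets us insert the particular $r$ we want in each part and use monotonicity in (a) to convert a rank bound with extra parameters into a bound on $\rk(d/aa')$ itself; the whole argument reduces to pigeonhole over the countable space $\P$.
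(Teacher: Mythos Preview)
Your proof is correct and follows essentially the paper's approach.  One point to tighten in (1): from the countability of $\P$ you correctly conclude that the set of ordinal rank-values is countable, but you also assert that each such value is a \emph{countable} ordinal, which does not follow from cardinality alone; the paper handles this by invoking Lemma~\ref{continuousrank}, so that the image $I\setminus\{\infty\}$ is downward closed and hence, being a countable set, equals some countable ordinal $\gamma$.  Your treatment of (2)(a),(b) by contradiction, taking a local bound $\alpha$ on the putative ordinal values, is a minor rephrasing of the paper's direct application of the global $\gamma^*$ from (1); both arguments hinge on the same monotonicity step $\rk(d/aa')\ge\rk(d/aa'bc)$ and the same choice $b^*:=a'b$.
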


\begin{proof}  (1)  Let $I$ be the image of the rank function $\rk:\P\rightarrow ON\cup\{\infty\}$.
By Lemma~\ref{continuousrank} $I\setminus\{\infty\}$ is downward closed and is countable since $\P$ is.
Thus, $I\setminus\{\infty\}=\gamma$ for some countable ordinal $\gamma$.  Then $\gamma^*:=\gamma+1$ satisfies (1).

(2)  Suppose $\rk(d/a)=\infty$.  Then $\rk(d/a)\ge \gamma^*+1$ for $\gamma^*$ as in (1).  It follows from the definition of $\rk(d/a)$
that for every $r\in S_{at}(a)$ there are $a',b,c$ in $N$ such that $a'$ realizes $r$, $c\in\pcl(aa'bd)\setminus \pcl(aa'b)$, and $\rk(d/aa'bc)\ge\gamma^*$.
Then $\rk(d/aa')\ge\rk(d/aa'bc)\ge\gamma^*$, so $\rk(d/aa')=\infty$, satisfying (2a).  For (2b), take $b^*:=a'b$.
\end{proof}

\subsection{Finite and infinite chains}

We begin with a notational extension of our rank function and show that $\rk(d/a)\ge n$ can be characterized by a certain finite chain of submodels of $N$. 

\begin{Definition}\label{extrank}  {\em  Suppose $A\subseteq N$ is infinite.    For $d\subseteq N$ finite, say $\rk(d/A):=\min\{\rk(d/a):a\subseteq A\ \hbox{finite}\}$.
In particular, $\rk(d/A)=\infty$ if and only if $\rk(d/a)=\infty$ for all finite $a\subseteq A$.
}
\end{Definition}

\begin{Proposition} \label{chain}  For all $n\in\omega$, for all finite $d,a\subseteq N$,
$\rk(d/a)\ge n$ if and only if there is a chain $M_0\preceq M_1\preceq\dots\preceq M_n=N$ with $a\subseteq M_0$
and $\pcl(M_id)\cap(M_{i+1}\setminus M_i)\neq \emptyset$ for all $i<n$.
\end{Proposition}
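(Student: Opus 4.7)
The plan is to prove the equivalence by induction on $n$. The base case $n=0$ is trivial: the ``chain'' collapses to $M_0=N$ with no pseudo-closure requirement to check, while $\rk(d/a)\ge 0$ holds unconditionally.

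For the inductive step $n\to n+1$, I would handle the two directions separately. For $(\Leftarrow)$, assume a chain $M_0\preceq\dots\preceq M_{n+1}=N$ with $a\subseteq M_0$ and witnesses $c_i\in\pcl(M_id)\cap(M_{i+1}\setminus M_i)$. Fix arbitrary $r\in S_{at}(a)$ and $\beta\le n$. Because $M_0\preceq N$ is itself an atomic model, $r$ is realized by some $a'\in M_0$. Take the witness $c_0\in\pcl(M_0d)\cap(M_1\setminus M_0)$ and a finite $b\subseteq M_0$ with $c_0\in\pcl(bd)$. Then $aa'b\subseteq M_0$ forces $\pcl(aa'b)\subseteq M_0$, so $c_0\notin M_0$ gives $c_0\notin\pcl(aa'b)$, while $c_0\in\pcl(aa'bd)$ is immediate. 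The tail chain $M_1\preceq\dots\preceq M_{n+1}=N$ has length $n$ and contains $aa'bc_0\subseteq M_1$, so by the inductive hypothesis $\rk(d/aa'bc_0)\ge n\ge\beta$, verifying the defining condition for $\rk(d/a)\ge n+1$.

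For $(\Rightarrow)$, assume $\rk(d/a)\ge n+1$. Iterating the defining clause $n+1$ times, each time taking (say) the trivial type so that we may absorb $a'=\emptyset$, extract witnesses $b_0,c_0,b_1,c_1,\dots,b_n,c_n$ as follows: setting $e_i:=ab_0c_0b_1c_1\cdots c_{i-1}b_i$, one obtains $c_i\in\pcl(e_id)\setminus\pcl(e_i)$ together with $\rk(d/e_ic_i)\ge n-i$, so the extraction continues. Now build the chain top-down: set $M_{n+1}=N$, and for $i=n,n-1,\dots,0$ choose an elementary submodel $M_i\preceq M_{i+1}$ with $e_i\subseteq M_i$ and $c_i\notin M_i$. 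One checks routinely that $a\subseteq e_0\subseteq M_0$; that $c_i\in e_{i+1}\subseteq M_{i+1}$ for $i<n$ (with $c_n\in N=M_{n+1}$); that $c_i\notin M_i$ by construction; and that $c_i\in\pcl(e_id)\subseteq\pcl(M_id)$ since $e_i\subseteq M_i$.

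The main obstacle is this top-down choice: I need each $M_i$ to be an elementary submodel of $M_{i+1}$ (not merely of $N$) that contains $e_i$ yet avoids $c_i$. The hypothesis $c_i\notin\pcl(e_i)$ directly yields some $M^*\preceq N$ with $e_i\subseteq M^*$ and $c_i\notin M^*$, but $M^*$ need not sit inside $M_{i+1}$. The resolution is a standard fact, implicit throughout the paper, that pseudo-algebraicity depends only on the complete type $\tp(c/e)$ and is therefore preserved under passage to elementary submodels: if $M\preceq N$ is atomic with $e\subseteq M$, $c\in M$, and $c\notin\pcl(e)$, then some $M'\preceq M$ satisfies $e\subseteq M'$ and $c\notin M'$. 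Granting this, the top-down construction runs through uniformly and the induction closes.
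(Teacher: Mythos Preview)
Your proof is correct. The $(\Leftarrow)$ direction is essentially identical to the paper's.

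For $(\Rightarrow)$ the paper organizes things a bit differently: rather than unwinding the rank definition $n+1$ times and then building the entire chain top-down, it extracts a single pair $b,c$ (with $c\in\pcl(abd)\setminus\pcl(ab)$ and $\rk(d/abc)\ge n$), applies the inductive hypothesis directly to $\tp(d/abc)$ to obtain a chain $M_0\preceq\dots\preceq M_n=N$ with $abc\subseteq M_0$, and then prepends a single $M_{-1}\preceq M_0$ containing $ab$ but omitting $c$. Both arguments hinge on exactly the fact you isolate---that since $\pcl$ depends only on the complete type and all countable atomic models of $T$ are isomorphic, one can find the omitting submodel \emph{inside} a given $M_i$ rather than merely inside $N$. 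The paper's organization exploits the inductive hypothesis more directly and needs this fact only once per inductive step; your top-down build is perfectly valid and is in fact precisely the argument the paper gives later as Lemma~\ref{finitestr} when treating $d/a$-approximations.
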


\begin{proof}  By induction on $n$.  This is trivial when $n=0$, so assume the result for $n$.
First, assume $\rk(d/a)\ge n+1$.  Take $r(y):=`y=a'$, $\beta=n$, and choose $a',b,c$ such that $a'$ realizes $r$ (hence $a'=a$),
$\rk(d/abc)\ge n$, $c\in\pcl(abd)\setminus\pcl(ab)$.   Apply the inductive hypothesis to $\tp(d/abc)$ to get
$M_0\preceq\dots\preceq M_n=N$ with $abc\subseteq M_0$ and $\pcl(M_id)\cap(M_{i+1}\setminus M_i)\neq \emptyset$ for all $i<n$.
Since $c\not\in\pcl(ab)$, there is $M_{-1}\preceq M_0$ with $ab\subseteq M_{-1}$, but $c\in M_0\setminus M_{-1}$.  As $c\in\pcl(abd)$, we have
$c\in\pcl(M_{-1}d)$, so $M_{-1}\preceq M_0\preceq \dots \preceq M_n=N$ is a requisite chain of length $n+1$.

Conversely, assume a chain $M_0\preceq \dots\preceq M_{n+1}=N$ satisfies $a\subseteq M_0$ and $\pcl(M_id)\cap(M_{i+1}\setminus M_i)\neq \emptyset$ for all $i<n+1$.
To see that $\rk(d/a)\ge n+1$, choose any $r\in S_{at}(a)$.  Choose any $a'\in M_0$ realizing $r$.  From our assumption on the chain, choose $c\in M_1\setminus M_0$
with $c\in\pcl(M_0d)$.  Choose a finite $b\subseteq M_0$ such that $c\in\pcl(aa'bd)$.  As $c\not\in M_0$, $c\not\in\pcl(aa'b)$.
However, $aa'bc\subseteq M_1$ and the $n$-chain $M_1\preceq\dots\preceq M_n=N$ satisfies $\pcl(M_id)\cap(M_{i+1}\setminus M_i)\neq \emptyset$ for all $1\le i<n+1$.
Thus, $\rk(d/aa'bc)\ge n$ by our inductive hypothesis.  
The above witnesses that $\rk(d/a)\ge n+1$, so we are done.
\end{proof}

Proposition~\ref{chain} has many corollaries.  The first indicates that the adjectives of pseudo-algebraic and pseudo-minimal occur naturally.

\begin{Corollary} \label{nomenclature} Suppose $d,a\subseteq N$ are finite sets.   Then:
\begin{enumerate}
\item  $\rk(d/a)=0$ if and only if $\tp(d/a)$ is pseudo-algebraic; and
\item  $\rk(d/a)=1$ if and only if $\tp(d/a)$ is pseudo-minimal.
\end{enumerate}
\end{Corollary}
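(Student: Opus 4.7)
Both parts reduce to specializing Proposition \ref{chain} to $n = 1$ and $n = 2$. Part (1) is essentially the remark made just after Definition \ref{bfP}: $\rk(d/a) = 0$ iff $\rk(d/a) \not\geq 1$, and the $n = 1$ case of Proposition \ref{chain} says this is equivalent to: no $M_0 \preceq N$ containing $a$ admits an element of $(N \setminus M_0) \cap \pcl(M_0 d)$. If $d \in \pcl(a)$, then $d \in M_0$ for every such $M_0$, so $\pcl(M_0 d) = \pcl(M_0) \subseteq M_0$ and no witness exists. Conversely, if $d \notin \pcl(a)$, choose any $M_0 \preceq N$ containing $a$ with $d \notin M_0$ and take $c := d$.

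For part (2), by part (1) both sides already imply $d \notin \pcl(a)$, so it suffices to show that $\rk(d/a) \geq 2$ is equivalent to the failure of pseudo-minimality. Applying Proposition \ref{chain} at $n = 2$, $\rk(d/a) \geq 2$ iff there is a chain $M_0 \preceq M_1 \preceq N$ with $a \subseteq M_0$, some $c_0 \in (M_1 \setminus M_0) \cap \pcl(M_0 d)$, and some element of $(N \setminus M_1) \cap \pcl(M_1 d)$. For the forward direction, pick a finite $b \subseteq M_0$ with $c_0 \in \pcl(abd)$. Then $c_0 \notin \pcl(ab)$ since $c_0 \notin M_0 \supseteq ab$, and $d \notin M_1$ (else $\pcl(M_1 d) = \pcl(M_1) \subseteq M_1$, contradicting the upper chain step), so $d \notin \pcl(abc_0)$ because $abc_0 \subseteq M_1$. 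Hence $b, c_0$ witness the failure of pseudo-minimality. For the converse, given $b, c$ with $c \in \pcl(abd) \setminus \pcl(ab)$ and $d \notin \pcl(abc)$, build the chain from the top down: first pick $M_1 \preceq N$ with $abc \subseteq M_1$ and $d \notin M_1$, so that $d$ itself lies in $(N \setminus M_1) \cap \pcl(M_1 d)$; then pick $M_0 \preceq M_1$ with $ab \subseteq M_0$ and $c \notin M_0$, yielding $c \in \pcl(abd) \subseteq \pcl(M_0 d)$ and $c \in M_1 \setminus M_0$.

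The one delicate step is producing $M_0 \preceq M_1$ (rather than merely $M_0 \preceq N$) that contains $ab$ and omits $c$. This step uses only $c \notin \pcl(ab)$ together with homogeneity and the uniqueness of the countable atomic model of $T$, and is precisely the move already invoked in the proof of Proposition \ref{chain}. I expect this to be the main technical point; beyond it, both equivalences come out as direct unpackings of the chain characterization.
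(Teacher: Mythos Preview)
Your proposal is correct and follows essentially the same route as the paper's proof: both parts are obtained by unpacking Proposition~\ref{chain} at $n=1$ and $n=2$, with the 2-chain built top-down from a witness $(b,c)$ to non-pseudo-minimality and, conversely, such a witness read off from a given 2-chain. One tiny cosmetic point: in part~(1), when $d\notin\pcl(a)$ you should take $c$ to be a \emph{component} of the tuple $d$ lying outside $M_0$, not the whole tuple; otherwise your argument and the paper's coincide.
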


\begin{proof}   (1)  Note that $\rk(d/a)=0$ iff  $\rk(d/a)\not\ge 1$ iff there does not  exist $M\preceq N$ with $a\subseteq M$, $d\not\subseteq M$
 iff $\tp(d/a)$ is pseudo-algebraic.

(2)  First, suppose $\rk(d/a)=1$.  By (1), $\rk(d/a)$ is not pseudo-algebraic.  To see that $\tp(d/a)$ is pseudo-minimal, choose any $c\in\pcl(da)\setminus\pcl(a)$,
and assume by way of contradiction that $d\not\in\pcl(ac)$.
Since $d\not\in \pcl(ac)$, choose $M_1\preceq N$ with $ac\subseteq d$, but $d\not\subseteq M_1$.  Also, since $c\not\in\pcl(a)$, there is $M_0\preceq M_1$
with $a\subseteq M_0$, but $c\not\subseteq M_0$.  Then the 2-chain $(M_0,M_1,N)$ witnesses that $\rk(d/a)\ge 2$.
Conversely, assume $\tp(d/a)$ is pseudo-minimal and we show there cannot be a 2-chain $M_0\preceq M_1\preceq N$ with $a\subseteq M_0$ and $c\in M_1\setminus M_0$
with $c\in\pcl(M_0d)\setminus M_0$.   If there were, then taking any finite $b\subseteq M_0$ for which $c\in\pcl(bd)$, the elements $a,b,c,d$ contradict the pseudo-minimality of
$\tp(d/a)$.
\end{proof}

\begin{Corollary}   \label{dense} If $\At$ is ranked, then the pseudo-minimal types are dense.
\end{Corollary}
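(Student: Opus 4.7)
The plan is to derive the corollary directly from Lemma~\ref{continuousrank} (continuity of the rank) together with Corollary~\ref{nomenclature}, which identifies pseudo-algebraic types with rank~$0$ and pseudo-minimal types with rank~$1$.

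First I would fix a non-pseudo-algebraic type $p=\tp(d/a)\in\P$. Since $\At$ is ranked, $\rk(d/a)$ is an ordinal, and by Corollary~\ref{nomenclature}(1) the hypothesis that $p$ is not pseudo-algebraic forces $\rk(d/a)\ge 1$. Let $\alpha:=\rk(d/a)$.

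Next I would split on $\alpha$. If $\alpha=1$, then Corollary~\ref{nomenclature}(2) already gives that $\tp(d/a)$ itself is pseudo-minimal, so the empty extension $a^*:=\emptyset$ suffices. If $\alpha\ge 2$, then $1<\alpha$, so applying Lemma~\ref{continuousrank} with $\gamma:=1$ produces a finite tuple $a^*\subseteq N$ with $\rk(d/aa^*)=1$. Invoking Corollary~\ref{nomenclature}(2) once more, $\tp(d/aa^*)$ is pseudo-minimal, which is exactly what density of pseudo-minimal types demands.

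There is no real obstacle here: the work has been done in the continuity lemma and in the semantic identification of rank $0$ and rank $1$. The only subtlety worth flagging is that Lemma~\ref{continuousrank} requires $\gamma<\alpha$ strictly, so the case $\alpha=1$ has to be handled separately, but in that case $p$ is already pseudo-minimal over $a$ and no extension is needed.
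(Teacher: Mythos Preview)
Your proof is correct and follows essentially the same approach as the paper: invoke Corollary~\ref{nomenclature} to get $\rk(d/a)\ge 1$, then use Lemma~\ref{continuousrank} to drop the rank to exactly $1$, and apply Corollary~\ref{nomenclature} again. Your explicit case split on $\alpha=1$ versus $\alpha\ge 2$ is slightly more careful than the paper's phrasing, which applies Lemma~\ref{continuousrank} uniformly and leaves the $\alpha=1$ case (where one simply takes $b=\emptyset$) implicit.
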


\begin{proof}  Choose any $d,a\subseteq N$ such that $\tp(d/a)$ is not pseudo-algebraic.  Since $\rk(d/a)$ exists, by Corollary~\ref{nomenclature}, $\rk(d/a)\ge 1$.  
By Lemma~\ref{continuousrank} there is $b\subseteq N$ such that $\rk(d/ab)=1$, hence $\tp(d/ab)$ is pseudo-minimal.
\end{proof}

Finally, we see that finitely ranked types can be extended to types over models with the same rank.

\begin{Corollary}  \label{extension}  Suppose $\rk(d/a)=n<\omega$.  Then:
\begin{enumerate}
\item  For any $r\in S_{at}(a)$ there is $a'$ realizing $r$ such that $\rk(d/aa')=n$.
\item  There is a model $M\preceq N$ with $a\subseteq M$ and $\rk(d/M)=n$.
\end{enumerate}
\end{Corollary}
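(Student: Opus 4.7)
The plan is to apply the chain characterization (Proposition~\ref{chain}) both times. Since $\rk(d/a)=n<\omega$, Proposition~\ref{chain} yields a chain $M_0\preceq M_1\preceq\cdots\preceq M_n=N$ with $a\subseteq M_0$ and $\pcl(M_id)\cap(M_{i+1}\setminus M_i)\neq\emptyset$ for each $i<n$. This fixed chain, and in particular the model $M_0$, serves as the witness for both parts.

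For (1), fix $r\in S_{at}(a)$. Since $r$ lies in $S_{at}(a)$ it is isolated by some complete formula $\phi(y,a)$, and because $N$ realizes every type over $a$ the sentence $\exists y\,\phi(y,a)$ holds in $N$; by $M_0\preceq N$ and $a\subseteq M_0$ it holds in $M_0$, so there is $a'\in M_0$ realizing $r$. Then $aa'\subseteq M_0$, so the same chain $M_0\preceq M_1\preceq\cdots\preceq M_n=N$ (with base $aa'$ in place of $a$) is a witness via Proposition~\ref{chain} that $\rk(d/aa')\ge n$. Combined with the monotonicity bound $\rk(d/aa')\le\rk(d/a)=n$ from Lemma~\ref{monotonebase}(2), this gives $\rk(d/aa')=n$.

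For (2), take $M:=M_0$. For any finite $e\subseteq M$, the tuple $ae$ is also contained in $M_0$, so the chain $M_0\preceq M_1\preceq\cdots\preceq M_n=N$ witnesses $\rk(d/ae)\ge n$ by Proposition~\ref{chain}; monotonicity in the base (Lemma~\ref{monotonebase}(2)) then gives $\rk(d/e)\ge\rk(d/ae)\ge n$. Since $a\subseteq M$ and $\rk(d/a)=n$, the infimum in Definition~\ref{extrank} is exactly $n$, so $\rk(d/M)=n$.

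There is no real obstacle here beyond observing that the chain from Proposition~\ref{chain} admits the parameter $a'$ from part (1) into its base model: the only nontrivial point is that any atomic type over a finite set contained in $M_0$ is actually realized in $M_0$, which follows from isolation by a complete formula together with $M_0\preceq N$.
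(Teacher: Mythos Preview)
Your proof is correct. For part~(1) it is essentially identical to the paper's argument: fix an $n$-chain via Proposition~\ref{chain}, realize $r$ inside $M_0$, and observe that the same chain witnesses $\rk(d/aa')\ge n$, with equality from Lemma~\ref{monotonebase}(2).

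For part~(2) you take a slightly different (and more direct) route than the paper. The paper argues that (2) ``follows immediately by iterating (1) $\omega$ times,'' i.e., one dovetails through all atomic types over successive finite sets to build a model $M\supseteq a$ as a union, maintaining $\rk(d/a_k)=n$ at each finite stage. You instead simply set $M:=M_0$ from the chain already produced and observe that for every finite $e\subseteq M_0$ the same chain witnesses $\rk(d/e)\ge n$ (your detour through $ae$ and monotonicity is harmless but unnecessary, since $e\subseteq M_0$ already suffices for Proposition~\ref{chain}). This avoids the bookkeeping of an $\omega$-step construction at the cost of nothing; it is the cleaner argument. Both approaches ultimately rest on the same fact---that any finite tuple from $M_0$ can be absorbed into the base without dropping the rank below $n$---so the difference is organizational rather than substantive.
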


\begin{proof}  (1) Using Proposition~\ref{chain}, choose an $n$-chain $M_0\preceq\dots M_n=N$ with $a\subseteq M_0$ and $\pcl(M_id)\cap(M_{i+1}\setminus M_i)\neq \emptyset$ for all $i<n$.  Choose any $a'\in r(M_0)$.   Then the same chain demonstrates that $\rk(d/aa')\ge n$.  Hence $\rk(d/aa')=n$ by Lemma~\ref{monotonebase}.

(2) follows immediately by iterating (1) $\omega$ times.  
\end{proof}

\subsection{Additivity}

\begin{Lemma}  \label{choice}  Suppose $X$ is finite and $\tp(d/X)<\omega$.  If $c\in \pcl(dX)\setminus \pcl(X)$, then
$\rk(d/Xc)<\rk(d/X)$.
\end{Lemma}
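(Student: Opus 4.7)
The plan is to assume $n := \rk(d/X) < \omega$, suppose for contradiction that $\rk(d/Xc) \geq n$, and derive a contradiction. By Lemma~\ref{monotonebase}(2) this forces $\rk(d/Xc) = n$, so the goal becomes showing $\rk(d/X) \geq n+1$, which contradicts $\rk(d/X) = n$.

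Unpacking the definition of rank at $\beta = n$, I fix an arbitrary $r \in S_{at}(X)$ and seek witnesses $a', b, e \subseteq N$ with $\tp(a'/X)=r$, $\rk(d/Xa'be) \geq n$, and $e \in \pcl(dXa'b) \setminus \pcl(Xa'b)$. I intend to take $b = \emptyset$ and $e = c$, so the task reduces to finding $a'$ realizing $r$ with $\rk(d/Xa'c) \geq n$ \emph{and} $c \notin \pcl(Xa')$. The inclusion $c \in \pcl(dX) \subseteq \pcl(dXa')$ will then be immediate.

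First I choose a special realization $a^*$ of $r$ that already avoids $c$ in pseudo-closure. Since $c \notin \pcl(X)$, pick $M' \preceq N$ with $X \subseteq M'$ and $c \notin M'$. Because $N$ is atomic, $r$ is principal, so $r$ is realized in every elementary submodel containing $X$; pick $a^* \in M'$ realizing $r$. Then $Xa^* \subseteq M'$ and $c \notin M'$ witness $c \notin \pcl(Xa^*)$. Now let $r' := \tp(a^*/Xc) \in S_{at}(Xc)$ and apply Corollary~\ref{extension}(1) to $\rk(d/Xc) = n$ and $r'$ to obtain $a' \models r'$ with $\rk(d/Xca') = n$. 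Since $\tp(a'c/X) = \tp(a^*c/X)$, homogeneity of $N$ over finite sets yields $\sigma \in \mathrm{Aut}(N)$ fixing $X$ pointwise with $\sigma(a^*) = a'$ and $\sigma(c) = c$; then $\sigma(M')$ is an elementary submodel of $N$ containing $Xa'$ but missing $c$, so $c \notin \pcl(Xa')$. Thus $(a', \emptyset, c)$ satisfies all three required conditions, proving $\rk(d/X) \geq n+1$ and closing the contradiction.

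The main obstacle is precisely the selection of $a'$: a naive application of Corollary~\ref{extension}(1) to an \emph{arbitrary} completion of $r$ over $Xc$ could return an $a'$ with $c \in \pcl(Xa')$, which would violate the third clause of the rank definition and collapse the argument. The remedy is to pre-select $a^*$ inside a model omitting $c$ (forcing $c \notin \pcl(Xa^*)$), apply Corollary~\ref{extension}(1) to the \emph{specific} completion $r' = \tp(a^*/Xc)$, and then transfer the non-pseudo-closure statement from $a^*$ to $a'$ via the homogeneity of $N$.
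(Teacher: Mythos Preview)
Your argument is correct, but it takes a more circuitous route than the paper. The paper's proof is a three-line application of the chain characterization (Proposition~\ref{chain}): set $n=\rk(d/Xc)$, pick an $n$-chain $M_0\preceq\cdots\preceq M_n=N$ witnessing this with $Xc\subseteq M_0$, and then, since $c\notin\pcl(X)$, choose $M_{-1}\preceq M_0$ containing $X$ but omitting $c$. Because $c\in\pcl(dX)\subseteq\pcl(dM_{-1})$, the extended chain $M_{-1}\preceq M_0\preceq\cdots\preceq M_n$ is an $(n+1)$-chain for $\tp(d/X)$, so $\rk(d/X)\ge n+1$. No contradiction framing, no appeal to Corollary~\ref{extension}, no homogeneity argument.

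Your approach instead unwinds the rank definition directly and invokes Corollary~\ref{extension}(1) to manufacture the realization $a'$ with $\rk(d/Xca')=n$, then uses homogeneity to transfer $c\notin\pcl(Xa^*)$ to $c\notin\pcl(Xa')$. This works, but note that Corollary~\ref{extension}(1) is itself proved via Proposition~\ref{chain}, so you are implicitly passing through the chain picture anyway---just wrapped in more packaging. The paper's version cuts out the middleman and is the more natural argument once Proposition~\ref{chain} is available. One minor remark: the homogeneity step is unnecessary even in your framework, since whether $c\in\pcl(Xa')$ depends only on $\tp(a'c/X)=\tp(a^*c/X)$.
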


\begin{proof}  Say $\rk(d/Xc)=n$ and choose an $n$-chain $M_0\preceq\dots M_n=N$ with $Xc\subseteq M_0$ and 
$\pcl(M_id)\cap(M_{i+1}\setminus M_i)\neq \emptyset$ for all $i<n$.    Since $c\in M_0$ but $c\not\in\pcl(X)$, choose $M_{-1}\preceq M_0$
with $X\subseteq M_{-1}$ and $c\in M_0\setminus M_{-1}$.  This gives an $(n+1)$-chain for $\tp(d/X)$, hence $\rk(d/X)\ge n+1$.
\end{proof}

Note that the same result holds when $X$ is infinite as well (so long as the ranks are finite).  

\begin{Proposition}  \label{additivity}   Suppose $a,d,e\subseteq N$ are finite.
\begin{enumerate}
\item  If $\rk(e/a)=k$ and $\rk(d/ae)=\ell$ are both finite, then $\rk(de/a)=k+\ell$, so in particular is finite.
\item  If $\rk(de/a)<\omega$, then $\rk(de/a)=\rk(e/a)+\rk(d/ae)$.
\end{enumerate}
\end{Proposition}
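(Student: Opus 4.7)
The plan is to reduce Part (2) to Part (1) via monotonicity, then prove Part (1) by strong induction on $n = k + \ell$, handling both inequalities together in the inductive step. Part (2) follows immediately: if $\rk(de/a) < \omega$, then Lemma~\ref{monotonebase}(1) gives $\rk(e/a) \leq \rk(de/a) < \omega$, while the chain characterization (Proposition~\ref{chain}) yields $\rk(de/ae) = \rk(d/ae)$ (since $e \in ae$ forces $\pcl(M_i de) = \pcl(M_i d)$ along any such chain), so Lemma~\ref{monotonebase}(2) gives $\rk(d/ae) = \rk(de/ae) \leq \rk(de/a) < \omega$; Part (1) then applies.

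For Part (1), the base case $n = 0$ is immediate: $e \in \pcl(a)$ and $d \in \pcl(ae) = \pcl(a)$ force $\rk(de/a) = 0$. In the inductive step I establish the two inequalities separately.

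For the lower bound $\rk(de/a) \geq n$, assume $k \geq 1$ (the case $k = 0$ follows from $\rk(de/a) \geq \rk(de/ae) = \rk(d/ae) = \ell = n$). Fix $r \in S_{at}(a)$ and $\beta < n$. The definition of $\rk(e/a) \geq k$ supplies $(a', b, c)$ with $\tp(a'/a) = r$, $c \in \pcl(eaa'b) \setminus \pcl(aa'b)$, and $\rk(e/aa'bc) \geq k - 1$ (forced to equal $k - 1$ by Lemma~\ref{choice}). Let $\tau = \tp(a'bc/ae) \in S_{at}(ae)$. The crucial move is to apply Corollary~\ref{extension}(1) to $d/ae$ with the type $\tau$, producing $(a^*, b^*, c^*)$ realizing $\tau$ over $ae$ with $\rk(d/aea^*b^*c^*) = \ell$. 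Because all the $e$-witness properties (namely, $\tp(a^*/a) = r$, the pcl conditions on $c^*$, and $\rk(e/aa^*b^*c^*) = k - 1$) depend only on $\tp(a^*b^*c^*/ae) = \tau$, they also hold for $(a^*, b^*, c^*)$. Applying the IH to the new base $aa^*b^*c^*$, where the relevant ranks sum to $(k - 1) + \ell = n - 1 < n$, yields $\rk(de/aa^*b^*c^*) = n - 1 \geq \beta$, and the pcl conditions on $c^*$ transfer to $de$ via $\pcl(eaa^*b^*) \subseteq \pcl(deaa^*b^*)$. This verifies the rank definition for $\rk(de/a) \geq n$.

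For the upper bound, suppose for contradiction $\rk(de/a) \geq n + 1$. Applying the rank definition with the trivial type over $a$ and $\beta = n$, obtain $(b, c)$ with $\rk(de/abc) \geq n$ and $c \in \pcl(deab) \setminus \pcl(ab)$. If $c \in \pcl(eab)$, Lemma~\ref{choice} applied to $e/ab$ forces $\rk(e/abc) < \rk(e/ab) \leq k$; otherwise $c \in \pcl(deab) \setminus \pcl(eab)$ and Lemma~\ref{choice} applied to $d/eab$ forces $\rk(d/abce) < \rk(d/eab) \leq \ell$. Either way, $\rk(e/abc) + \rk(d/abce) \leq n - 1 < n$, so by the IH $\rk(de/abc) \leq n - 1$, contradicting $\rk(de/abc) \geq n$. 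The main technical obstacle is the lower bound: coupling the $e$-witness constraints with a realization that also attains the maximum $d/ae$-rank. The resolution is the observation that $e$-witness properties are determined by $\tp(a'bc/ae)$, so Corollary~\ref{extension}(1) can supply an aligned realization meeting both sets of constraints simultaneously.
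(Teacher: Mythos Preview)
Your proof is correct, but it follows a genuinely different route from the paper's, particularly for the lower bound in Part~(1).

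The paper argues both inequalities via the chain characterization (Proposition~\ref{chain}) rather than by induction on $k+\ell$. For the upper bound, it takes any $n$-chain $M_0\preceq\dots\preceq M_n=N$ witnessing $\rk(de/a)\ge n$ and walks down it: at each step the witness $c\in\pcl(M_j de)\setminus M_j$ either lies in $\pcl(M_j e)$ or not, and Lemma~\ref{choice} forces one of $\rk(e/M_j)$, $\rk(d/M_j e)$ to strictly drop, so after $n$ steps the sum $\rk(e/M_0)+\rk(d/M_0 e)\ge n$; monotonicity then gives $n\le k+\ell$. This is the same dichotomy you use, but executed globally along one chain rather than unwound one step and fed to an inductive hypothesis. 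For the lower bound, the paper simply \emph{concatenates chains}: start with an $\ell$-chain $M_0\preceq\dots\preceq M_\ell=N$ for $d/ea$, then use an isomorphism $f:N\to M_0$ fixing $ea$ to transport a $k$-chain for $e/a$ into $M_0$, producing a $(k+\ell)$-chain for $de/a$.

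Your lower bound instead works directly from the rank definition, and the key device---using Corollary~\ref{extension}(1) to realize $\tau=\tp(a'bc/ae)$ while preserving $\rk(d/ae\,\cdot\,)=\ell$, then transferring the $e$-witness properties because they depend only on $\tau$---is a nice observation that the paper does not need. The trade-off: the paper's concatenation argument is shorter and more self-contained (it does not invoke Corollary~\ref{extension}), while your inductive scheme makes the additivity feel closer to the recursive definition of rank and avoids the isomorphism-transport step. Both arrive at the same place with comparable effort.
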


\begin{proof}  (1)  Choose any $n\in\omega$ such that $\rk(de/a)\ge n$ and we argue that $n\le k+\ell$.
To see this, choose an $n$-chain $M_0\preceq\dots\preceq M_n=N$ with $a\subseteq M_0$ and $\pcl(M_ide)\cap(M_{i+1}\setminus M_i)\neq \emptyset$ for all $i<n$. 
We argue by induction on $i$ that 
$$\rk(e/M_{n-i})+\rk(d/M_{n-i}e)\ge i$$
for all $0\le i\le n$.  For $i=0$ this is obvious, so assume it holds for $i<n$ and we show this for $i+1$.  Let $j=n-i-1$.
Choose $c\in (M_{j+1}\setminus M_{j})\cap\pcl(de/M_{j})$.
There are two cases.  If $c\in\pcl(eM_{j})\setminus M_j$ then by Lemma~\ref{choice}, $\rk(e/M_{j})>\rk(e/M_{j+1})$.
On the other hand, if $c\in\pcl(deM_j)\setminus \pcl(eM_j)$, then $rk(d/eM_j)>\rk(d/eM_{j+1})$.
In either case, the sum is incremented by at least one.

However, by Lemma~\ref{monotonebase}, $\rk(e/M_0)\le \rk(e/a)=k$ and $\rk(d/M_0e)\le \rk(ae)=\ell$.  It follows that $n\le k+\ell$.
In particular, $n:=\rk(de/a)$ is finite.   To show that $n\ge k+\ell$, it suffices to produce an $n$-chain for $\tp(de/a)$.
For this, since $\rk(d/ea)=\ell$, find $M_0\preceq \dots\preceq M_\ell$ with $ea\subseteq M_0$ and 
$\pcl(M_id)\cap(M_{i+1}\setminus M_i)\neq \emptyset$ for all $i<\ell$.

Next, fix an isomorphism $f:N\rightarrow M_0$ fixing $ea$ pointwise.
Since $\rk(e/a)=k$, there exist $M_{-k}\preceq \dots\preceq M_0$ with $a\subseteq M_{-k}$ and $\pcl(M_ie)\cap(M_{i+1}\setminus M_i)\neq \emptyset$ for all $-k\le i<0$.
The concatenation of these gives a $(k+\ell)$-chain $M_{-k}\preceq \dots M_\ell$ witnessing that $\rk(de/a)\ge k+\ell$.

(2)  By monotonicity, if $\rk(de/a)<\omega$, then both $\rk(d/ea)$ and $\rk(e/a)$ are finite as well (and, in fact, are at most $\rk(de/a)$).
So we are done by (1).
\end{proof}

We record the following immediate corollary (recall Definition~\ref{bfP}).

\begin{Corollary}  \label{add}   If $\At$ is finitely ranked 
then  $$\rk(ab/c)=\rk(a/bc)+\rk(b/c)$$
for all finite $a,b,c$ from $N$.
\end{Corollary}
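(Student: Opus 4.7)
The plan is to derive the Corollary as an immediate application of Proposition~\ref{additivity}(2), with no further work required beyond a variable renaming. First I would observe that the hypothesis ``$\At$ is finitely ranked'' unpacks, via Definition~\ref{bfP}, to: $\rk(p) < \omega$ for every $p \in \P$. In particular, for any finite tuples $a,b,c \subseteq N$, the type $\tp(ab/c)$ lies in $\P$ and therefore satisfies $\rk(ab/c) < \omega$. This is exactly the hypothesis under which Proposition~\ref{additivity}(2) applies.

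Next, I would invoke Proposition~\ref{additivity}(2) with the translation of dummy variables $d \mapsto a$, $e \mapsto b$, and (the proposition's) $a \mapsto c$. The statement there, $\rk(de/a) = \rk(e/a) + \rk(d/ae)$, then reads
\[
\rk(ab/c) = \rk(b/c) + \rk(a/cb),
\]
and since $cb$ and $bc$ denote the same finite set of parameters, this is precisely $\rk(ab/c) = \rk(a/bc) + \rk(b/c)$ after reordering the summands. The universal quantification over $a,b,c$ in the corollary is handled uniformly because the finite rank hypothesis is a global assumption on $\At$.

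There is no real obstacle; the substantive content — namely the induction on chain-length and the case split in Lemma~\ref{choice} distinguishing whether the witnessing element $c \in \pcl(M_i de) \setminus M_i$ already lies in $\pcl(eM_i)$ — was already carried out inside the proof of Proposition~\ref{additivity}. The purpose of recording the corollary is purely presentational: it phrases full additivity in the symmetric notation $\rk(ab/c) = \rk(a/bc) + \rk(b/c)$ that will be convenient for later sections, in particular when iterating additivity to decompose a finite tuple into a sequence of pseudo-minimal steps in the proof of Proposition~\ref{domexistence}.
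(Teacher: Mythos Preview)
Your proposal is correct and matches the paper's approach exactly: the paper simply records this as an immediate corollary of Proposition~\ref{additivity} without writing out a proof, and your derivation---unpacking ``finitely ranked'' via Definition~\ref{bfP} to obtain $\rk(ab/c)<\omega$ and then invoking Proposition~\ref{additivity}(2) with the obvious variable substitution---is precisely the intended one-line justification.
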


Next, we consider infinite chains and see that their existence characterizes $\rk(d/a)=\infty$.
The chains defined here are crucial for the construction of $2^{\aleph_1}$ models in $\aleph_1$.

\begin{Definition} \label{chaindef} {\em  Fix $\tp(d/a)\in \P$.
A {\em $d/a$-chain} is an
$\omega$-sequence $\<(M_i,c_i):i\in\omega\>$  with union $M^*$ such that
\begin{enumerate}
\item  $a\subseteq M_0$ and $c_0$ is meaningless;
\item  $M_0\preceq M_1\preceq\dots M^*\preceq N$ is a nested sequence of (countable atomic) models;
\item  $c_{i+1}\in M_{i+1}\setminus M_i$ for every $i\in\omega$; and
\item  $c_{i+1}\in \pcl(M_id)$.
\end{enumerate}
A {\em better} $d/a$-chain also satisfies:
\begin{enumerate}
\setcounter{enumi}{4}
\item  For every $c\in\pcl(M^*d)$, if $\rk(d/M^*c)=\infty$, then $c\in M^*$; and
\item For every finite $e\in M^*$, every non-pseudoalgebraic 1-type $q\in S_{at}(e)$ is realized in $N\setminus M^*$.
\end{enumerate}
}
\end{Definition}

Note that it follows from Clauses (3) and (4) that $d\not\in\bigcup_{i\in\omega} M_i$.  
With Proposition~\ref{charinfty} we show that having a (better) $d/a$-chain characterizes $\rk(d/a)=\infty$.
We begin by defining finite approximations of a $d/a$-chain.

\begin{Definition} \label{approximate}
{\em  A $d/a$-approximation is a sequence $\ee=\<(e_i,c_i):i\in\lg(\ee)\>$ where
\begin{enumerate}
\item  $a\subseteq e_0\subseteq N$, $c_0=\emptyset$ and $\lg(\ee)<\omega$;
\item  $c_{i+1}\in\pcl(e_i,d)\setminus\pcl(e_i)$; and
\item $e_i\cup\{c_{i+1}\}\subseteq e_{i+1}$;
\item  $\rk(d/e_{\lg(\ee)})=\infty$.
\end{enumerate}
}
\end{Definition}

\begin{Lemma}  \label{finitestr}  Given any $n\ge 1$ and any $d/a$-approximation $\ee=\<(e_i,c_i):i\in\lg(\ee)\>$ of length $n=\lg(\ee)$, there is a sequence
 $M_0\preceq \dots \preceq M_{n-1}\preceq N$
such that for each $i<n$
\begin{itemize}
\item  $e_i\subseteq N_i$; and
\item  $c_{i+1}\not\in N_i$
\end{itemize}
\end{Lemma}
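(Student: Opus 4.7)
My strategy is to build the models by downward induction on $i$, starting at $M_{n-1}$ and successively peeling off smaller countable atomic elementary submodels $M_{n-2}, M_{n-3}, \ldots, M_0$. Each step will produce $M_i \preceq M_{i+1}$, so transitivity of $\preceq$ guarantees $M_i \preceq N$ throughout.

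For the base case, condition~(2) of Definition~\ref{approximate} at index $n-1$ gives $c_n \in \pcl(e_{n-1}, d) \setminus \pcl(e_{n-1})$. Unpacking $c_n \notin \pcl(e_{n-1})$ via Definition~\ref{pcl}, and then cutting down with downward L\"owenheim--Skolem, yields $M_{n-1} \preceq N$ countable atomic with $e_{n-1} \subseteq M_{n-1}$ and $c_n \notin M_{n-1}$. For the inductive step, suppose $M_{i+1} \preceq N$ is countable atomic and contains $e_{i+1}$; by condition~(3) of Definition~\ref{approximate}, it also contains $e_i$ and $c_{i+1}$. Using $c_{i+1} \notin \pcl(e_i)$, one needs to find $M_i \preceq M_{i+1}$ containing $e_i$ but omitting $c_{i+1}$.

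The main obstacle is this last step, which rests on an auxiliary ``relative pseudo-closure'' fact already invoked implicitly in the proof of Proposition~\ref{chain}: for any countable atomic $M \preceq N$ and finite tuples $\bar a, b \in M$ with $b \notin \pcl(\bar a)$, there exists $M' \preceq M$ with $\bar a \subseteq M'$ and $b \notin M'$. I would prove this auxiliary fact as follows. Fix a countable witness $M^* \preceq N$ to $b \notin \pcl(\bar a)$ (so $\bar a \subseteq M^*$ and $b \notin M^*$). By uniqueness and $\omega$-homogeneity of the countable atomic model of $T$, and using that $M \preceq N$ forces $\tp^N(\bar a) = \tp^M(\bar a)$, there is an isomorphism $\tau \colon N \to M$ fixing $\bar a$ pointwise. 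Set $M'' := \tau(M^*) \preceq M$; then $\bar a \subseteq M''$ and $\tau(b) \notin M''$, though possibly $\tau(b) \neq b$. Since $\tp^M(\bar a, b) = \tp^N(\bar a, b) = \tp^M(\bar a, \tau(b))$, a further appeal to $\omega$-homogeneity of $M$ yields an automorphism $\rho \in \mathrm{Aut}(M)$ fixing $\bar a$ with $\rho(\tau(b)) = b$. Then $M' := \rho(M'')$ is an elementary submodel of $M$, contains $\bar a$, and excludes $b$ (if $b$ lay in $M'$, then $\tau(b) = \rho^{-1}(b)$ would lie in $M''$, contradiction). Once this auxiliary fact is in hand, iterating the inductive step $n-1$ times completes the construction of the chain.
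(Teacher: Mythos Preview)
Your proof is correct and follows essentially the same reverse-induction strategy as the paper's proof, which simply asserts that ``we can construct inside of $N_{n-1}$'' a model containing $e_{n-2}$ but omitting $c_{n-1}$, and continues. Your auxiliary fact (that a witness to $b\notin\pcl(\bar a)$ can be found inside any prescribed countable atomic $M\preceq N$) is exactly the point the paper leaves implicit, and your justification via uniqueness and homogeneity of the countable atomic model is the intended one.
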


\begin{proof} By reverse induction.  First, since $c_n\not\in\pcl(e_{n-1})$, there is a (countable, atomic) model $N_{n-1}$ that contains $e_{n-1}$ but not $c_n$.
Next, we work inside $N_{n-1}$.  As $c_{n-1}\subseteq e_{n-1}$, $c_{n-1}\subseteq N_{n-1}$.  Since $c_{n-1}\not\in\pcl(e_{n-2})$, there is
$N_{n-2}$, which we can construct inside of $N_{n-1}$, that contains $e_{n-2}$
but not $c_{n-1}$.  Now continue.
\end{proof}

The following Proposition is a slight strengthening  of Theorem~\ref{summary}(1).  Better $d/a$-chains  constitute the Data used in proving Theorem~\ref{bigone}.

\begin{Proposition}  \label{charinfty}  The following are equivalent for any $\tp(d/a)\in\P$.
\begin{enumerate}
\item  
$\rk(d/a)=\infty$;
\item  A $d/a$-chain exists;
\item    A better $d/a$-chain exists.
\end{enumerate}
\end{Proposition}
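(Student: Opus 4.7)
The implication $(3)\Rightarrow(2)$ is immediate since a better $d/a$-chain satisfies all the clauses of a $d/a$-chain by definition. For $(2)\Rightarrow(1)$, given a $d/a$-chain $\<(M_i,c_i):i\in\omega\>$, I would fix an arbitrary $n\ge 1$ and exhibit the truncated chain $M_0\preceq M_1\preceq\dots\preceq M_{n-1}\preceq N$ (substituting the ambient model $N$ for the top). The pcl-witness for $i<n-1$ is $c_{i+1}$ directly from the chain; for $i=n-1$ it is $c_n$, which lies in $N\setminus M_{n-1}$ (since $c_n\in M_n\preceq N$ and $c_n\notin M_{n-1}$) with $c_n\in\pcl(M_{n-1}d)$. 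Proposition~\ref{chain} then gives $\rk(d/a)\ge n$ for every $n$, and Proposition~\ref{Prop2}(1) upgrades this to $\rk(d/a)=\infty$.

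The substantive direction is $(1)\Rightarrow(3)$. The plan is to build a better $d/a$-chain by an $\omega$-stage construction, maintaining the invariant that each $M_i$ is a countable atomic elementary substructure of $N$ with $a\subseteq M_i$, satisfies $\rk(d/M_i)=\infty$ in the sense of Definition~\ref{extrank}, and realizes all the clauses (1)--(4) on the initial segment. At stage $i$, given $M_i$, I would first invoke Proposition~\ref{Prop2}(2b) (applied to a finite $e\subseteq M_i$ with $\rk(d/e)=\infty$) to obtain tuples $b^\ast,c_{i+1}$ with $c_{i+1}\in\pcl(eb^\ast d)\setminus\pcl(eb^\ast)$ and $\rk(d/eb^\ast c_{i+1})=\infty$. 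I would then close off to a countable atomic $M_{i+1}\preceq N$ containing $M_i\cup b^\ast\cup\{c_{i+1}\}$, preserving the rank invariant by iterating Proposition~\ref{Prop2}(2a) to realize all finite types over the growing base while keeping $\rk(d/\cdot)=\infty$ on each finite piece.

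Clauses (5) and (6) are achieved by interleaving two standard bookkeeping enumerations. For (5), I would enumerate $N=\{n_k:k\in\omega\}$ and at the corresponding stage, if the current element $n_k$ satisfies $n_k\in\pcl(M_ id)$ and $\rk(d/M_in_k)=\infty$, ensure $n_k\in M_{i+1}$. This suffices because any $c\in\pcl(M^\ast d)$ already lies in $\pcl(M_id)$ for some $i$ (by finite generation of pcl), and $\rk(d/M^\ast c)=\infty$ transfers to $\rk(d/M_ic)=\infty$ via monotonicity (Lemma~\ref{monotonebase} combined with Definition~\ref{extrank}). For (6), I would enumerate all pairs $(e,q)$ with $e\subseteq N$ finite and $q\in S_{at}(e)$ non-pseudoalgebraic, and at the stage where $e\subseteq M_i$, reserve a realization $d^\ast\in N\setminus M_i$ of $q$ (such a realization exists because non-pseudoalgebraic types are non-algebraic in the usual sense, hence have infinitely many realizations in $N$, and $M_i$ is only one of many models containing $e$; using the homogeneity of $N$ we can conjugate an external realization into one avoiding $M_i$).

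The main obstacle is arranging the two bookkeeping schemes so that they do not interfere: clause (6) demands that the reserved $d^\ast$'s never be absorbed into any future $M_j$, which constrains the choice of $c_{j+1}$ and of the closing-off elements, since we need $d^\ast\notin\pcl(M_jc_{j+1})$ at every subsequent stage. The key observation that unblocks this is that at each stage there is genuine freedom in the choice of $c_{j+1}$: since $\rk(d/M_j)=\infty$, the set of admissible pairs $(b^\ast,c)$ provided by Proposition~\ref{Prop2}(2b) is not forced to include the finitely many reserved elements in its pcl, so one may pick $c_{j+1}$ respecting the reservations. A priority argument ordering the requirements (with the finitely many active reservations trumping new tasks) then allows all three invariants (rank preservation, clause (5) absorption, clause (6) avoidance) to be sustained simultaneously, producing the desired better $d/a$-chain.
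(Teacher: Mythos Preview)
Your $(2)\Rightarrow(1)$ has a real gap. Proposition~\ref{chain} only gives $\rk(d/a)\ge n$ for each finite $n$, hence $\rk(d/a)\ge\omega$; but Proposition~\ref{Prop2}(1) upgrades $\rk\ge\gamma^*$ to $\rk=\infty$ for some fixed countable $\gamma^*$ that may well exceed $\omega$, so the invocation does not go through. The paper closes this by proving, via transfinite induction on $\alpha$, the stronger statement that $\rk(d/e)\ge\alpha$ for \emph{every} finite $e\subseteq\bigcup_i M_i$: given $r\in S_{at}(e)$ and $\beta<\alpha$, realize $r$ by some $a'\in M_n\supseteq e$, take $c=c_{n+1}$ and a finite $b\subseteq M_n$ with $c\in\pcl(ea'bd)$, and apply the inductive hypothesis to $ea'bc\subseteq M^*$. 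Finite chains cannot substitute for this transfinite step.

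For $(1)\Rightarrow(3)$, your treatment of clauses (1)--(5) is in spirit the paper's (the paper organizes it via finite $d/a$-approximations rather than full models at each stage, but the content is the same). Clause~(6), however, is handled quite differently, and your priority argument is under-justified: the difficulty is not only the choice of $c_{j+1}$ but every Henkin witness added while closing off to $M_{j+1}$, and you have not argued that the realizations supplied by Proposition~\ref{Prop2}(2a) can always be chosen distinct from the reserved $d^*$'s, nor that a model containing $M_j\cup\{c_{j+1}\}$ and simultaneously omitting all currently reserved elements must exist. The paper sidesteps this tension entirely: it first builds a chain satisfying (1)--(5) only, then passes to a countable atomic $N'\succeq N$ in which every non-pseudoalgebraic $q\in S_{at}(b)$ with $b\subseteq M^*$ finite is realized in $N'\setminus M^*$. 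Since $\pcl$ computed in $N$ agrees with $\pcl$ computed in $N'$, the chain remains a $d/a$-chain satisfying (5) relative to $N'$; pulling back along any isomorphism $N'\to N$ fixing $da$ then yields the desired better chain in $N$.
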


Proof.  $(3)\Rightarrow(2)$ is trivial.

$(2)\Rightarrow(1)$:    
Fix  a $d/a$-chain $\<(M_i,c_i):i\in\omega\>$.  To show that $\rk(d/a)=\infty$, it suffices to prove that for all ordinals $\alpha$,

\begin{quotation}
For every finite $e\subseteq \bigcup M_i$, $\rk(d/e)\ge\alpha$.
\end{quotation}
We  establish this by induction on $\alpha$.  Fix $\alpha$ and assume that this holds for every $\beta<\alpha$.
Fix any finite $e\subseteq \bigcup M_i$.   We directly argue that $\rk(d/e)\ge\alpha$
from the definition of $\rk$.  So fix $r\in S_{at}(e)$ and $\beta<\alpha$.
Choose $n$ such that $e\subseteq M_n$.  Pick any realization $a'$ of $r$ in $M_n$.
Since $c_{n+1}\in\pcl(M_nd)$, we can find a finite $b\subseteq M_n$ such that
$c_{n+1}\in \pcl(ea'bd)$.  Let $q=\tp(d/ea'bc_{n+1})$.  By our inductive hypothesis, $\rk(q)\ge\beta$.  Thus, $\rk(d/e)\ge\alpha$ by the definition of $\rk$.

$(1)\rightarrow(3)$:  Suppose $\rk(d/a)=\infty$.  Trivially, $\<(a,\emptyset)\>$ is a $d/a$-approximation of length 1.  We will construct a $d/a$-chain satisfying 
Clause~(5) of  Definition~\ref{chaindef} in $\omega$ steps and  then modify it to obtain Clause~(6) as well.  To get the first part,  
it suffices to show that any $d/a$-approximation can be extended in each of three ways.

\medskip
\noindent{\bf Extending the sequence}

Fix any $d/a$-approximation $\ee$ of length $n$.  In order to get a $d/a$-approximation of length $(n+1)$ extending $\ee$, first note that $\rk(d/e_{n-1})=\infty$.  Thus, 
taking $r=\emptyset$ (or, if you prefer, let
$r(y):=`y=e_{n-1}'$) there are $b,c$ such that $c\not\in\pcl(e_{n-1}b)$, but
$c\in\pcl(e_{n-1}bd)$, and $\rk(d/e_{n-1}bc)=\infty$.  So, let $e_{n-1}'=e_{n-1}b$ and $e_n'=e_{n-1}bc$ (with $e_j'=e_j$ for all $j<n-1$).

\medskip
\noindent{\bf Enlarging $e_j$ one step toward a model}

Fix any $d/a$-approximation
$\ee=\<(e_i,c_i):i\in\lg(\ee)\>$ of length $n=\lg(\ee)$.
Choose any $j<n$ and fix any consistent formula $\phi(x,e_j)$.  We will produce a larger $d/a$-approximation $\ee'$ where $e_j'$ contains a realization of $\phi(x,e_j)$.  To do this, first choose a sequence of models $N_0\subseteq N_1\subseteq\dots\subseteq N_{n-1}$ as in Lemma~\ref{finitestr}.  So, $c_i\subseteq e_i\subseteq N_i$ and $c_{i+1}\in N_i$.  Next, choose $a^*$ from $N_j$ realizing $\phi(x,e_j)$.  Let $r^*=\tp(a^*/e_{n-1})$.

Finally, we apply Proposition~\ref{Prop2}(2) to get $a',b,c$ with $a'$ realizing $r^*$ and $\rk(d/e_{n-1}a'bc)=\infty$.  Let $q=\tp(d/e_{n-1}a')$ be the restriction, which also has $\rk(q)=\infty$.
Now, for $i<j$, let $e_i'=e_i$, while $e_i'=e_ia'$ for all $j\le i<n$.

\medskip
\noindent{\bf One step toward Clause (5) of Definition~\ref{chaindef}}

Fix any $d/a$-approximation
$\ee=\<(e_i,c_i):i\in\lg(\ee)\>$ of length $n=\lg(\ee)$,  and choose any $c\in\pcl(e_{n-1}d)$.  If $\rk(d/e_{n-1}c)<\infty$, then do nothing at this stage.
But if $\rk(d/e_{n-1}c)=\infty$, then affix $c$ to $e_{n-1}$ and continue.

\medskip

By dovetailing these three processes, we can construct a $d/a$-chain $\<(M_n,c_n):n\in\omega\>$ satisfying Clause~(5) in $\omega$
steps.  To obtain Clause~(6), Let $M^*=\bigcup\{M_n:n\in\omega\}$.  As $M^*$ is countable and $S_{at}(b)$ is countable for every finite tuple $b$ from $M^*$,
there is a countable,  atomic model $N'\succeq N$ such that for every finite $b\in M^*$ every non-pseudoalgebraic $q\in S_{at}(b)$ is realized in $N'\setminus M^*$.
As $\pcl(Z,N)=\pcl(Z,N')$ for all sets $Z\subseteq N$, we see that $\<M_n:n\in\omega\>$ and $M^*$ is also a $d/a$-chain satisfying (5) with respect to $N'$
as well as with respect to $N$.
Thus, if $f:N'\rightarrow N$ is any isomorphism fixing $da$ pointwise, then $\<f(M_n):n\in\omega\>$ and $f(M^*)$ are a better $d/a$-chain in $N$, as required.
\qed

\section{Pseudo-minimal types and finitely ranked classes}

Fix a (complete) pseudo-minimal type $p\in S_{at}(\emptyset)$ and assume $\theta(x)$ isolates $p$.
Then for any finite tuple $a$ from $N$, the relation of pseudo-closure over $a$,  is an exchange space on $\theta(N)$.
That is, $(\theta(N),\pcl_a)$, where $c\in\pcl_a(B)$ iff $c\in\pcl(Ba)$ satisfies the van der Waerden axioms.  
This implies there is a good notion of independence, i.e., for any $a$ and any $\pcl_a$-closed $C$, any two maximal  $\pcl_a$-independent subsets of $C$ have the same cardinality,
which we  dub the {\em dimension} of $C$ over $a$.
We note the following easy facts about independent tuples from $\theta(N)$.

\begin{Lemma}   \label{Indepfacts}   Suppose $\theta(x)$ is a complete, pseudo-minimal formula and $a\subseteq N$ is any finite tuple.
\begin{enumerate}
\item  If a finite tuple  $\cbar\subseteq \theta(N)$ is independent over $a$, then there is $M\preceq N$, with $a\subseteq M$, but $\cbar\cap M=\emptyset$.
\item  Say $\cbar=\cbar_1\cap \cbar_2$ is any partition, then $\cbar_1$ is independent over $\cbar_2 b$, hence there is a model $M\preceq N$
with $\cbar_2 a\subseteq M$ and $\cbar_1\cap M=\emptyset$.
\item  For any $\cbar\in \theta(N)^n$, $\cbar$ is independent over $a$ if and only if $\rk(\cbar/a)=n$.
\end{enumerate}
\end{Lemma}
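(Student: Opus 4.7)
I plan to prove the three items out of order: establish (3) first, then use it together with the chain characterization of Proposition~\ref{chain} to derive (1), and finally deduce (2) from (1) via a short matroid exchange argument. The underlying engines are Corollary~\ref{nomenclature} (pseudo-minimality forces $\rk(c/\emptyset)=1$ for $c\in\theta(N)$), the monotonicity of Lemma~\ref{monotonebase}, and the full additivity of Proposition~\ref{additivity}. Together these pin down the ranks of the sub-tuples of an independent sequence exactly, which is what makes everything work.

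For (3), fix $\cbar=(c_1,\dots,c_n)\in\theta(N)^n$. In the forward direction, assume $\cbar$ is independent over $a$. For each $i$, independence gives $c_i\notin\pcl(ac_1\cdots c_{i-1})$, so $\rk(c_i/ac_1\cdots c_{i-1})\ge 1$ by Corollary~\ref{nomenclature}; monotonicity plus $\rk(c_i/\emptyset)=1$ yields the reverse bound, so each such rank equals $1$. As all these values are finite, iterating Proposition~\ref{additivity}(1) produces $\rk(\cbar/a)=\sum_i\rk(c_i/ac_1\cdots c_{i-1})=n$. In the reverse direction, assume $\rk(\cbar/a)=n$ but $\cbar$ is dependent over $a$. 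Since $\rk$ depends only on the underlying sets of its tuple arguments, I may reorder so that $c_n\in\pcl(ac_1\cdots c_{n-1})$, giving $\rk(c_n/ac_1\cdots c_{n-1})=0$. Proposition~\ref{additivity}(2) then collapses to $\rk(c_1\cdots c_{n-1}/a)=n$; but the same one-step bound used above, iterated, forces $\rk(c_1\cdots c_{n-1}/a)\le n-1$, a contradiction.

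With (3) in hand, (1) follows from Proposition~\ref{chain}: $\rk(\cbar/a)=n$ supplies a chain $M_0\preceq M_1\preceq\cdots\preceq M_n=N$ with $a\subseteq M_0$ and $\pcl(M_i\cbar)\cap(M_{i+1}\setminus M_i)\neq\emptyset$ for each $i<n$, and I claim $\cbar\cap M_0=\emptyset$ automatically. Indeed, if some $c_j\in M_0$ then $c_j\in M_i$ for every $i$, so $\pcl(M_i\cbar)=\pcl(M_i(\cbar\setminus c_j))$ and the very same chain witnesses $\rk(\cbar\setminus c_j/a)\ge n$; but $\cbar\setminus c_j$ is an $(n-1)$-tuple independent over $a$ (sub-tuples of independent tuples are independent), so the forward direction of (3) forces $\rk(\cbar\setminus c_j/a)=n-1$, a contradiction. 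Thus $M:=M_0$ is the required model. Finally, for (2) the matroid exchange step is immediate: any witness $c_i\in\pcl(a\cbar_2(\cbar_1\setminus c_i))$ of dependence within $\cbar_1$ over $a\cbar_2$ would give $c_i\in\pcl(a(\cbar\setminus c_i))$, contradicting independence of $\cbar$ over $a$; hence $\cbar_1$ is independent over $a\cbar_2$, and applying (1) with base $a\cbar_2$ and tuple $\cbar_1$ delivers the desired $M$. The only point of real care is the sharp rank computation in (3)---ensuring every iterated application of Proposition~\ref{additivity} is legitimate because each intermediate rank is bounded by $1$ thanks to pseudo-minimality---which is precisely what drives the disjointness $\cbar\cap M_0=\emptyset$ in the proof of (1).
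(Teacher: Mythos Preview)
Your proof is correct. The paper does not supply a proof of this lemma, introducing it only as ``easy facts about independent tuples,'' so there is no argument to compare against; your approach---computing $\rk(\cbar/a)$ exactly via pseudo-minimality, monotonicity, and additivity (Proposition~\ref{additivity}), then reading off the model from the chain characterization of Proposition~\ref{chain}---is precisely the natural route and fills in the details cleanly. The one cosmetic point is that the statement of (2) in the paper contains evident typos ($\cap$ for concatenation, $b$ for $a$), which you have interpreted and handled correctly.
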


The reader is cautioned, however, that although any two elements of $\theta(N)$ have the same 1-type over the empty set, there can be infinitely many 2-types of independent tuples 
in $\theta(N)^2$, e.g., if $\At$ is the class of atomic models of REF(bin), the theory of infinitely many refining equivalence relations, where each $E_{n+1}$ partitions each $E_n$-class into two pieces.   Thus, the apt analogue of $\theta(N)$ is that of a weakly minimal formula in the first order context.  Despite this, we have the following, which follows from the homogeneity of $N$.

\begin{Lemma}    \label{shuffle}  Suppose $a, b\subseteq N$ are finite and $\cbar\in\theta(N)^n$ is independent over $a$.  Then there is $\cbar'\in\theta(N)^n$
such that $\tp(\cbar/a)=\tp(\cbar'/a)$ and $\cbar'$ is independent over $ab$.
\end{Lemma}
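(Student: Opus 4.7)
The strategy is to find a realization $b^*$ of $\tp(b/a)$ inside a countable atomic model $M$ that is ``generic with respect to $\bar{c}$'' (in the sense that $\bar{c}$ remains $\pcl$-independent over $M$), and then invoke the homogeneity of $N$ to transfer the desired configuration back to the parameter $b$. The point is that Lemma~\ref{shuffle} is not really about moving $\bar{c}$ over $ab$ directly; it is about realizing $\tp(b/a)$ in a sufficiently generic place, and then applying an automorphism.

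First, I would apply Lemma~\ref{Indepfacts}(1) to $\bar{c}$ over $a$ to obtain a countable atomic model $M \preceq N$ with $a \subseteq M$ and $\bar{c} \cap M = \emptyset$. Since models are $\pcl$-closed, no $c_i$ lies in $\pcl(M)$; using the exchange geometry of $\pcl$ on $\theta(N)$, I would upgrade this to the full assertion that $\bar{c}$ is $\pcl$-independent over $M$, by arranging during the construction that $\pcl(M) \cap \pcl(a\bar{c}) = \pcl(a)$. Then, because $M$ is atomic and contains $a$, the principal type $\tp(b/a)$ is realized in $M$; pick any $b^* \in M$ realizing it. Since $a b^* \subseteq M$, the independence of $\bar{c}$ over $M$ immediately yields that $\bar{c}$ is $\pcl$-independent over the finite tuple $a b^*$.

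Next, by the homogeneity of $N$, choose $\sigma \in \mathrm{Aut}(N)$ fixing $a$ pointwise with $\sigma(b^*) = b$, and set $\bar{c}' := \sigma(\bar{c})$. Then $\tp(\bar{c}'/a) = \tp(\bar{c}/a)$ because $\sigma$ fixes $a$, and applying $\sigma$ to the $\pcl$-independence of $\bar{c}$ over $a b^*$---noting that $\pcl$, being defined via elementary submodels, is automorphism-invariant---yields $\pcl$-independence of $\bar{c}'$ over $\sigma(ab^*) = ab$, which is precisely the conclusion.

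The main obstacle is the upgrade in the first paragraph from the bare conclusion ``$\bar{c} \cap M = \emptyset$'' of Lemma~\ref{Indepfacts}(1) to the stronger ``$\bar{c}$ is $\pcl$-independent over $M$''. Having $c_i \notin \pcl(M)$ for each individual $i$ is free from $\pcl$-closedness of $M$; what needs the exchange axioms on $\theta(N)$ is ruling out mutual relations like $c_i \in \pcl(M, c_j)$ for $i \neq j$. This is precisely the geometric content that one extracts from the construction of $M$ in Lemma~\ref{Indepfacts}(1), and is the only place where the pseudo-minimality of $\theta$ is used beyond the homogeneity of $N$.
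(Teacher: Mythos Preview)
Your approach is essentially the paper's: obtain $M\preceq N$ with $a\subseteq M$ and $\cbar\cap M=\emptyset$, realize $\tp(b/a)$ in $M$ by some $b^*$ (the paper does this via an isomorphism $f:N\to M$ fixing $a$, you via $M\preceq N$ realizing the principal type --- both work), and transport by an automorphism fixing $a$ and sending $b^*\mapsto b$. You are in fact more careful than the paper on the one nontrivial point: the paper simply writes ``Since $\cbar\cap M=\emptyset$, $\cbar$ is independent over $ab'$,'' whereas you correctly flag that this upgrade is not a formal consequence of disjointness alone but is exactly what the chain construction underlying Lemma~\ref{Indepfacts}(1) actually delivers.
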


\begin{proof}  By Lemma~\ref{Indepfacts}(1), choose $M\preceq N$ with $a\subseteq M$ and $\cbar\cap M=\emptyset$.  Choose an isomorphism
$f:N\rightarrow M$ with $f(a)=a$ and let $b'\in M$ be such that $\tp(ab)=\tp(ab')$.  Since $\cbar\cap M=\emptyset$, $\cbar$ is independent over $ab'$.
Choose an automorphism $\sigma\in Aut(N)$ with $\sigma(a)=a$ and $\sigma(b')=b$.  Then $\cbar':=\sigma(\cbar)$ is independent over $ab$ and
$\tp(\cbar'/a)=\tp(\cbar/a)$.
\end{proof}

For the remainder of this section, we assume that $\At$ is finitely ranked, so we have full additivity of rank.

\begin{Definition} \label{domdef} {\em  Suppose $b,h\subseteq N$ and $\cbar\subseteq \theta(N)$.  We say {\em  $\cbar$ $\theta$-dominates $b$ over $h$} if
\begin{enumerate}
\item  $\cbar$ is independent over $h$; and
\item  For all $\cbar^*\subseteq \theta(N)$ and all $h^*\supseteq h$,
if $\cbar\cbar^*$ is independent over $h^*$, then $\cbar^*$ is independent over $h^*\cbar b$.
\end{enumerate}
}
\end{Definition}

Under the assumption that $\At$ is finitely ranked, the following existence lemma shows that dominating sets are easily attained.

\begin{Proposition}  \label{domexistence}  ($\At$ finitely ranked).     Suppose  $b,d\subseteq N$ are finite and $\cbar\subseteq \theta(N)$ is independent over $d$.  Then:
\begin{enumerate}
\item   There is a finite $h\supseteq d$ such that $\cbar$ $\theta$-dominates $b$ over $h$.
\item  Moreover, if $\cbar^*\subseteq \theta(N)$ is initially chosen such that $\cbar^*\cbar$ is independent over $d$, then we may additionally
have $\cbar^*\cbar$ is independent over $h$.
\end{enumerate}
\end{Proposition}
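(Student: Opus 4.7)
The plan is to prove (1) by a minimization argument combined with the full additivity of rank (Corollary~\ref{add}), and then to obtain (2) by running essentially the same argument under an additional independence constraint on the minimization class.

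For (1), since $\At$ is finitely ranked the set
$$ S = \{\rk(b/h\bar c) : d \subseteq h \subseteq N \text{ finite},\ \bar c \text{ independent over } h\} $$
is a nonempty subset of $\omega$ (it contains $\rk(b/d\bar c)$), so it has a minimum $n^*$. I fix an $h$ realizing $n^*$ and verify the domination condition as follows. Take any finite $h^* \supseteq h$ and any $\bar c^{\sharp}\subseteq\theta(N)$ with $\bar c\bar c^{\sharp}$ independent over $h^*$. Two applications of additivity to $\rk(b\bar c^{\sharp}/h^*\bar c)$, together with the observation (again from additivity) that $\bar c^{\sharp}$ is already independent over $h^*\bar c$, reduce the desired conclusion — that $\bar c^{\sharp}$ is independent over $h^*\bar c b$ — to the rank identity
$$ \rk(b/h^*\bar c\bar c^{\sharp}) = \rk(b/h^*\bar c). $$
One direction is immediate from Lemma~\ref{monotonebase}. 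For the other, I would note that both $h^*$ and $h^*\bar c^{\sharp}$ still belong to the minimization class: $\bar c$ remains independent over each, by a further additivity computation from the hypothesis $\bar c\bar c^{\sharp}$ independent over $h^*$. Minimality then gives both sides $\ge n^*$, while monotonicity from $h \subseteq h^*$ gives both sides $\le n^*$, forcing equality.

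For (2), I would run the same minimization but restricted to
$$ H_1 := \{h \supseteq d \text{ finite} : \bar c\bar c^* \text{ independent over } h\}, $$
which is nonempty since $d \in H_1$ by hypothesis. Let $h \in H_1$ realize the minimum $n^{**}$ of $\rk(b/h\bar c)$ on $H_1$. The rank calculation from (1) goes through verbatim except at the one place where minimality was invoked: to conclude $\rk(b/h^*\bar c\bar c^{\sharp}) \ge n^{**}$, one now needs $h^*\bar c^{\sharp}$ to lie in $H_1$, i.e.\ $\bar c\bar c^*$ must remain independent over $h^*\bar c^{\sharp}$.

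Securing this preservation step is the main obstacle. My plan is to handle it by combining Lemma~\ref{shuffle} with the homogeneity of $N$: starting from $\bar c^*$ independent over $d\bar c$ (which follows from $\bar c\bar c^*$ independent over $d$ via additivity), I would use the shuffle lemma to produce a $\bar c^{*\prime}$ with $\tp(\bar c^{*\prime}/d\bar c) = \tp(\bar c^*/d\bar c)$ that is independent over a prescribed finite superset (in particular large enough to absorb $h^*\bar c^{\sharp}$ together with $b$), and then pull back via an automorphism of $N$ fixing $d\bar c$ (and, as the argument requires, $b$) to transfer the independence back to the original $\bar c^*$. Once preservation is in place, the rank equality established as in (1) yields $\rk(b/h^*\bar c\bar c^{\sharp}) = \rk(b/h^*\bar c) = n^{**}$, which completes the domination verification.
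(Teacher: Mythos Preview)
Your argument for (1) is correct and matches the paper's proof essentially line for line: minimize $\rk(b/h\cbar)$ over $h\supseteq d$ with $\cbar$ independent over $h$, then use additivity twice plus minimality to get the required rank equality.

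Your approach to (2), however, has a genuine gap at exactly the point you flag as ``the main obstacle.'' The preservation step cannot be secured as you describe. In the domination verification the test data $h^*\supseteq h$ and $\cbar^{\sharp}$ are \emph{arbitrary} subject only to $\cbar\cbar^{\sharp}$ independent over $h^*$; nothing prevents, say, $h^*=h\cup\cbar^*$, in which case $\cbar\cbar^*$ is certainly not independent over $h^*\cbar^{\sharp}$ and $h^*\cbar^{\sharp}\notin H_1$. Your proposed fix via Lemma~\ref{shuffle} and an automorphism does not rescue this: the shuffle only gives $\tp(\cbar^{*\prime}/d\cbar)=\tp(\cbar^*/d\cbar)$, so any automorphism sending $\cbar^{*\prime}$ to $\cbar^*$ can be taken to fix $d\cbar$ but \emph{not} $b$, and hence not $h^*$ or $\cbar^{\sharp}$ either. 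After applying such an automorphism you learn that $\cbar^*$ is independent over the \emph{image} of $h^*\cbar^{\sharp}$, which is useless for bounding $\rk(b/h^*\cbar\cbar^{\sharp})$ against $n^{**}$. Requiring the automorphism to also fix $b$ (your parenthetical) would need $\tp(\cbar^{*\prime}/bd\cbar)=\tp(\cbar^*/bd\cbar)$, which the shuffle does not provide.

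The paper takes a different route for (2): it first runs the \emph{unrestricted} minimization of (1) to obtain an $h$ over which $\cbar$ $\theta$-dominates $b$, and then observes that $\theta$-domination depends only on $\tp(\cbar b h)$. It then applies Lemma~\ref{shuffle} to $\cbar^*$ over the base $bd\cbar$ to produce $\cbar''$ with $\tp(\cbar''/bd\cbar)=\tp(\cbar^*/bd\cbar)$ and $\cbar''$ independent over $bd\cbar h$; an automorphism fixing $bd\cbar$ and sending $\cbar''\mapsto\cbar^*$ then carries $h$ to some $h^*\supseteq d$ with $\cbar^*\cbar$ independent over $h^*$, while $\tp(\cbar b h^*)=\tp(\cbar b h)$ guarantees domination is preserved. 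The key difference is that the paper moves $h$ (which is harmless since domination is type-invariant in $\cbar b h$) rather than trying to force the arbitrary test parameters into a restricted class.
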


\begin{proof}  (1) Among all finite tuples $h\subseteq N$ with $h\supseteq d$ and $\cbar$ independent over $h$, choose
one such that $\rk(b/h\cbar)$ is minimized.  We argue that $\cbar$ $\theta$-dominates $b$ over $h$.
To see this, choose any $\cbar'\subseteq \theta(N)$ and $h'\supseteq h$ such that $\cbar'\cbar$ is independent over $h'$.
We verify that $\cbar'$ is independent over $h'\cbar b$ by proving that $\rk(\cbar'/h'\cbar b)=\rk(\cbar'/h'\cbar)=\lg(\cbar')$.
The second equality is clear, as $\cbar'\cbar$ is independent over $h'$, but the first equality takes some work.

Note that $\cbar$ is independent over $h'\cbar'$, so the minimality property of $h$ yields
$$\rk(b/h'\cbar'\cbar)=\rk(b/h'\cbar)$$
Now we use additivity of $\rk$, Corollary~\ref{add},  twice.  On one hand,
$$\rk(b\cbar'/h'\cbar)=\rk(b/h'\cbar\cbar')+\rk(\cbar'/h'\cbar)$$
while on the other hand,
$$\rk(b\cbar'/h'\cbar)=\rk(\cbar'/h'\cbar b )+\rk(b/h'\cbar)$$
Combining these three equalities gives the requisite $\rk(\cbar'/h'\cbar b)=\rk(\cbar'/h'\cbar)$.

(2)  Now suppose $\cbar^*$ is given in advance with $\cbar^*\cbar$ independent over $d$.   By (1), choose $h\supseteq d$ with $\cbar$ $\theta$-dominating $b$ over $h$.
This $h$ might not have $\cbar^*\cbar$ independent over $h$, but we apply Lemma~\ref{shuffle} to get one that is.
Choose $\cbar''\subseteq \theta(N)$ such that $\tp(\cbar''/bd\cbar)=\tp(\cbar^*/bd\cbar)$ with $\cbar''$ independent over $bd\cbar h$.
Note that  since $\cbar$ is independent over $h$, so is $\cbar''\cbar$.  

As they have the same type, choose  an automorphism $\sigma\in Aut(N)$ such that $\sigma\mr{bd\cbar}=id$ and $\sigma(\cbar'')=\cbar^*$.
Put $h^*:=\sigma(h)$.   By the automorphism, $\cbar^*\cbar$ is independent over $h^*$.  As well,
 $h^*\supseteq d$ and since $\tp(\cbar b h)=\tp(\cbar b h^*)$, $\cbar$ $\theta$-dominates $b$ over $h^*$.
 \end{proof}

We obtain the following Corollary that may be helpful in constructing an atomic model of size continuum.

\begin{Corollary}  \label{crit}   ($\At$ finitely ranked)  Suppose $\cbar^*\subseteq\theta(N)$ is given, and  $\cbar, b, h, \cbar', b'$ satisfy
\begin{enumerate}
\item  $\cbar$ $\theta$-dominates $b$ over $h$;
\item  $\cbar^*\cbar$ is independent over $h$;
\item  $\cbar'$ is independent over $\cbar^*h\cbar b$; and
\item $\tp(\cbar' b'/h)=\tp(\cbar b/h)$.
\end{enumerate}
Then $\cbar^*$ is independent over $h \cbar b \cbar' b'$.
\end{Corollary}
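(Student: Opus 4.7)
The plan is to exploit the hypothesis of $\theta$-domination twice, first to upgrade hypothesis (2) and then (after transferring the domination statement along an automorphism) to reach the final conclusion. Full additivity of rank (Corollary~\ref{add}) will glue the two steps together.

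First I would apply Definition~\ref{domdef}(2) to hypothesis (1) with $h^* = h$ and $\cbar^{**} = \cbar^*$. Since hypothesis (2) gives that $\cbar^*\cbar$ (equivalently $\cbar\cbar^*$, as independence over $h$ is a property of the set of coordinates, encoded by $\rk(\cdot/h)$ equalling length) is independent over $h$, the domination of $b$ by $\cbar$ over $h$ yields that $\cbar^*$ is independent over $h\cbar b$.

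Next I would use hypothesis (4) together with the homogeneity of $N$ to fix an automorphism $\sigma \in \mathrm{Aut}(N)$ with $\sigma\mr{h} = \mathrm{id}$ and $\sigma(\cbar b) = \cbar' b'$. Since $\theta$-domination is obviously preserved under automorphisms fixing the base, (1) transfers to give that $\cbar'$ $\theta$-dominates $b'$ over $h$.

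Now I would combine hypothesis (3) with the conclusion of the first step via additivity. By (3) we have $\rk(\cbar'/\cbar^* h \cbar b) = \lg(\cbar')$, and by the first step $\rk(\cbar^*/h\cbar b) = \lg(\cbar^*)$. Corollary~\ref{add} then gives
\[
\rk(\cbar'\cbar^*/h\cbar b) \;=\; \rk(\cbar'/\cbar^* h \cbar b) + \rk(\cbar^*/h\cbar b) \;=\; \lg(\cbar') + \lg(\cbar^*),
\]
so $\cbar'\cbar^*$ is independent over $h\cbar b$.

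Finally, I would apply Definition~\ref{domdef}(2) to the domination statement of the second step, taking $h^* := h\cbar b \supseteq h$ and $\cbar^{**} := \cbar^*$. The third step supplies exactly the hypothesis that $\cbar'\cbar^*$ is independent over $h^*$, so the conclusion is that $\cbar^*$ is independent over $h^* \cbar' b' = h\cbar b \cbar' b'$, as required. The main thing to be careful about is that independence is insensitive to the order of the concatenation, which is immediate from Lemma~\ref{Indepfacts}(3); beyond this bookkeeping, the argument is a direct two-step application of the domination property stitched together by additivity.
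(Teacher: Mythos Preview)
Your proof is correct and follows essentially the same route as the paper's: two applications of $\theta$-domination (first for $\cbar$ over $h$, then for $\cbar'$ over $h\cbar b$ after transferring via (4)), with the intermediate goal of showing $\cbar^*\cbar'$ independent over $h\cbar b$. The only cosmetic difference is that the paper first combines (2) and (3) to get $\cbar^*\cbar'\cbar$ independent over $h$ and then applies domination (1) once with test tuple $\cbar^*\cbar'$, whereas you apply domination (1) with test tuple $\cbar^*$ alone and then invoke additivity to adjoin $\cbar'$; both reach the same intermediate statement and finish identically.
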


\begin{proof}  By (3), $\cbar'$ is independent over $\cbar^* \cbar h$,
hence by (2), 
\begin{quotation}  $\cbar^*\cbar'\cbar$ is independent over $h$
\end{quotation}
so by (1), $\cbar^*\cbar'$ is independent over $h \cbar b$.

As they have the same type, (4) implies that $\cbar'$ $\theta$-dominates $b'$ over $h$.  
Apply this with $h^*:=h\cbar b$ givens $\cbar^*$ independent over $h \cbar b \cbar' b'$.
\end{proof}

\section{Few atomic models implies  ranked}  \label{3}
 \numberwithin{Theorem}{subsection}

Throughout this section, we assume that $\At$ is not ranked, i.e., $\rk(p)=\infty$ for some $p\in\P$.  The goal of this section
is to prove that $\At$ contains a family of $2^{\aleph_1}$ non-isomorphic (atomic) models, each of size $\aleph_1$.
To begin, we fix the following Data that is obtained from the existence of a better $a^*/a$ chain via Proposition~\ref{charinfty}.
(Note that as we are producing so many models, we may absorb $a$ into the signature so we are left with a better $a^*/\emptyset$ chain.)
Similarly, we can always add a constant symbol to the language, thereby assuring that $\pcl(\emptyset)\neq\emptyset$.

\begin{Data}  \label{DataA}
 Fix a countable $N^*\in\At$, an elementary chain $\<M_n:n\in\omega\>$ with union $M^*\preceq N^*$, a
distinguished element $a^*$, elements $c_m\in M_{m+1}\setminus M_m$ and finite tuples $\dbar_m$ from $M_m$ such that
\begin{enumerate}
\item  Each $c_m\in\pcl(a^*\dbar_m,N^*)$;
\item  For every $e\in N^*\setminus M^*$,  if $e\in\pcl(M^*a^*,N^*)$, then $\rk(a^*/M^*e)<\infty$; and
\item  For every finite $b\in M^*$, every non-pseudoalgebraic $q\in S_{at}(b)$ is realized in $N^*\setminus M^*$.
\end{enumerate}
\end{Data}

Our strategy is similar to the proof of the non-structural result in \cite{BLSmanymod}.
In Subsection~\ref{order}, which is nearly identical with \cite[\S 4.1]{BLSmanymod}, we define a family of orders $I^S$, indexed by stationary/costationary subsets of $\omega_1$
and discuss weakly striated models indexed by such an $I^S$.
The forcing $(\Q_I,\le)$ is defined in Subsection~\ref{defineforcing}.  In Subsection~\ref{proof1} we prove Proposition~\ref{forcing}, 
that verifies that  $(\Q_I,\le)$ forces the existence of  atomic $N_I$ with cardinality $\aleph_1$ determined by the order $I$. 
Finally, in Subsection~\ref{final}, we show that having this uniform process of forcing an extension allows us build families of atomic models
$(N_S:S\subseteq\omega_1)$ {\em in ${\mathbb V}$} (as opposed to in a forcing extension) in such a manner that if $S\triangle S'$ is stationary,
then $N_S\not\cong N_{S'}$.  Theorem~\ref{summary}(2) follows easily from this.

\subsection{A class of linear orders and weakly striated models}  \label{order}

We begin by describing a class of $\aleph_1$-like linear orders, colored by a unary predicate $P$ and an equivalence relation $E$ with convex classes.
A related notion of striation was   discussed in \cite{BLSmanymod}.

\begin{Definition} {\em  Let $\L=\{<,P,E\}$ and let $\II^*$ denote all $\L$-structures $(I,<,P,E)$ satisfying:
\begin{enumerate}
\item $(I,<)$ is an $\aleph_1$-like dense linear order  (i.e., $|I|=\aleph_1$, but $pred_I(a)$ is countable for every $a\in I$) with minimum element $\min(I)$;
\item $P$ is a unary predicate;
\item  $E$ is an equivalence relation on $I$ with convex classes such that
\begin{enumerate}
\item  If $t=\min(I)$ or if $P(t)$ holds, then $t/E=\{t\}$;
\item  Otherwise, $t/E$ is (countable) dense linear order without endpoints.
\end{enumerate}
\item  The condensation $I/E$ is a dense linear order with minimum element, no maximum element, such that both
sets $\{t/E:P(t)\}$ and $\{t/E:\neg P(t)\}$ are dense in it.
\end{enumerate}
}
\end{Definition}

We are interested in well-behaved proper initial segments of elements of $\II^*$.

\begin{Definition}  {\em  Fix $(I,<,P,E)\in\II^*$.  A proper initial segment $J\subseteq I$ is
{\em endless} if it has no maximum element and is {\em suitable} if, for every $s\in J$ there is $t\in J$, $t>s$,
with $\neg E(s,t)$.  Finally, call a suitable $J$ {\em seamless} if $I\setminus J$ has no minimal $E$-class.
}
\end{Definition}

Clearly, $J$ suitable implies $J$ endless.
The following  Lemma and Construction are  Lemma~4.1.3  and Construction~4.1.4 of   \cite{BLSmanymod}.

\begin{Lemma} \label{seamless}
 If $(I,<,P,E)\in\II^*$ and $J\subseteq I$ is a seamless proper initial segment, then for every finite $\SS\subseteq I$ and $w\in J$ such that $w>s$ for every $s\in\SS\cap J$, there is an automorphism
$\pi$ of $(I,<,P,E)$ that fixes $\SS$ pointwise, and $\pi(w)\not\in J$.
\end{Lemma}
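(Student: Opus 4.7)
The plan is to reduce to a countable $E$-closed initial segment of $I$ and build the required automorphism there, extending by the identity outside. I begin with the structural facts used. By convexity of $E$-classes and suitability of $J$ (implied by seamlessness), the $E$-class of any $x\in J$ is contained in $J$; by seamlessness itself, no $E$-class inside $I\setminus J$ has a minimum; and clauses (3)--(4) of $\II^*$ ensure that $P$ is constant on $E$-classes, descending to a predicate $\hat P$ on $I/E$ whose two colors are both dense in $I/E$. Set $s^-:=\max(\SS\cap J)$ and $s^+:=\min(\SS\setminus J)$ (with $-\infty$ and $+\infty$ conventions if the relevant set is empty). Because $s^+\in I\setminus J$ (or $s^+=+\infty$) and $J$ is a proper initial segment, $(s^-,s^+)\cap(I\setminus J)$ is nonempty; density of both colors in $I/E$ then yields $w'\in I\setminus J$ with $s^-<w'<s^+$, $P(w')\leftrightarrow P(w)$, and $\neg E(w',s)$ for every $s\in\SS$. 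I am implicitly using that $w$ is not $E$-equivalent to any element of $\SS$; this is automatic from the intended context, and is necessary since otherwise $\pi$ would force $\pi(w)\in w/E=s/E\subseteq J$.

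Next, pick a singleton $E$-class $t\in I$ (i.e.\ $P(t)$) with $w'<t<s^+$; such a $t$ exists because the $\hat P$-colored classes are dense in $I/E$. Let $R:=\{x\in I:x\le t\}$. Since $I$ is $\aleph_1$-like, $R$ is countable, and since $t/E=\{t\}$ no $E$-edge can leave $R$ through the top, so $R$ is $E$-closed; also $\SS\cap R=\SS\cap J$ since $t<s^+$. Work first at the quotient level: $R/E$ is a countable linear order with endpoints $\min(I)/E$ and $t/E$, in whose open interior both $\hat P$-colors are dense. Define $\hat\pi:R/E\to R/E$ to be the identity outside the open sub-interval $(s^-/E,t/E)$, to fix each $E$-class of $\SS\cap J$, and on $(s^-/E,t/E)$ to be any order-$\hat P$-automorphism of that countable open DLO-with-dense-colors sending $w/E\mapsto w'/E$; such a map exists by Cantor's back-and-forth, using that $w/E$ and $w'/E$ share the same $\hat P$-color and both lie in the sub-interval. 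Now lift $\hat\pi$ to $\pi_R:R\to R$: for each $E$-class $C\subseteq R$, choose an order-isomorphism $C\to\hat\pi(C)$, arranging that on $C\cap(\SS\cup\{t\})$ it is the identity and that, for $C=w/E$, it sends $w\mapsto w'$. Both constraints are satisfiable because every non-singleton $E$-class is a countable DLO without endpoints, and the homogeneity of $\Q$ provides automorphisms with prescribed values.

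Finally, define $\pi:I\to I$ by $\pi|_R:=\pi_R$ and $\pi:=\mathrm{id}$ on $I\setminus R$. The verifications are routine: $<$ is preserved because $R$ is an initial segment and $\pi_R$ is an order-automorphism of $R$; $P$ is preserved by construction; $E$ is preserved because $R$ is $E$-closed, so no $E$-edge crosses the boundary; $\pi$ fixes $\SS$ pointwise (elements inside $R$ by construction, those outside $R$ by the identity); and $\pi(w)=w'\notin J$ by the choice of $w'$. The main obstacle is the back-and-forth step above: everything hinges on $(s^-/E,t/E)$ being a countable DLO without endpoints in which both $\hat P$-colors remain dense, for only then does Cantor's back-and-forth produce an automorphism sending $w/E$ to $w'/E$ while fixing the finitely many parameter classes. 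That density is precisely clause (4) of $\II^*$, and the passage to the countable $E$-closed $R$ (enabled by $\aleph_1$-likeness together with density of singleton classes) is what sidesteps any delicate $\aleph_1$-length back-and-forth on $I$ itself, where unrealized cuts could otherwise block the construction.
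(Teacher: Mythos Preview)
The paper does not prove this lemma here; it cites Lemma~4.1.3 of \cite{BLSmanymod}. Your argument is a correct self-contained proof via the natural route: pass to the quotient $I/E$, run a two-colored back-and-forth on a countable open interval to send $w/E$ to some class $w'/E$ in $(I\setminus J)/E$, lift class-by-class using the homogeneity of $\Q$, and extend by the identity above a singleton class $t$.

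Two small remarks. First, the assumption you flag---that $w$ is not $E$-equivalent to any $s\in\SS$---is indeed necessary for the conclusion, as is $w\neq\min(I)$ when $\SS\cap J=\emptyset$; both hold in the paper's application and should be read as implicit hypotheses. Concretely, when $\SS\cap J=\emptyset$ your interval $(s^-/E,t/E)$ acquires $\min(I)/E$ as a minimum and is no longer endpoint-free; the painless fix is to adjoin $\min(I)$ to $\SS$ at the outset. Second, your construction actually delivers more than the lemma states: since $\pi$ is the identity on all of $I_{>t}$ with $t<s^+=\min(\SS\setminus J)$, you obtain $\pi\mr{\ge s^+}=\mathrm{id}$, which is exactly the stronger form invoked in the proof of Claim~\ref{forceit}.
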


\begin{Construction}\label{IS}  Let $S\subseteq\omega_1$ with $0\not\in S$.  There is
$I^S=(I^S,<,P,E)\in\II^*$ that has a continuous, increasing sequence
$\<J_\alpha:\alpha\in\omega_1\>$ of proper initial segments such that:
\begin{enumerate}
\item If $\alpha\in S$, then $I^S\setminus J_\alpha$ has a minimum element $a_\alpha$ satisfying $P(a_\alpha)$; and
\item  If $\alpha\not\in S$ and $\alpha>0$, then $J_\alpha$ is seamless.
\end{enumerate}
\end{Construction}

\begin{Definition}  {\em Fix an atomic  $N\in \At$ and some $I=(I,<,E,P)\in\II^*$.
We say $N$ is {\em  weakly striated by $I$} if there are $\omega$-sequences $\<\abar_t:t\in I\>$ satisfying:
\begin{itemize}
\item  $N=\bigcup\{\abar_t:t\in I\}$;  (As notation, for $t\in I$, $N_{<t}=\bigcup\{\abar_j:j<t\}$.)
\item  If $t=\min(I)$, then $\abar_t\subseteq\pcl(\emptyset, N)$;
\item  For $t>\min(I)$, $a_{t,0}\not\in\pcl(N_{<t},N)$;
\item  For each $s$ such that $\neg P(s)$ and  for every $n\in\omega$, $a_{s,n}\in\pcl(N_{<s}\cup\{a_{s,0}\},N)$.
\end{itemize}
}

\end{Definition}

The final clause of this definition is weaker than the notion of {\em striation} \cite[Definition 4.5]{BLSmanymod} in that it only constrains levels where $P$ fails.  However, the definition of the forcing
will put a constraint (albeit weaker) on levels for which $P$ holds.

\medskip

Note:  In the definition above, we allow $a_{s,m}=a_{t,n}$ in some cases when $(s,m)\neq(t,n)$.  However, if $s<t$, then
the element $a_{t,0}\neq a_{s,m}$ for any $m$.

The idea of our forcing will be to force the existence of a  weakly striated atomic model $N_I$ indexed by the linear order $I\in\II^*$ formed from Construction~\ref{IS}.
We will begin with the array $X_I=\{x_{t,n}:t\in I,n\in\omega\}$ of symbols.  The forcing will give us a complete type $\Gamma$ in the variables $X_I$ in which every finite
subset realizes a principal type with respect to $T$.  This $\Gamma$ defines a congruence on $X_I$, with $x_{t,n}\sim x_{s,m}$ if
$\Gamma\vdash x_{t,n}=x_{s,m}$.  The universe of $N_I$ will be $X_I/\sim$, with $\Gamma$ providing interpretations of each symbol of $\tau$.
Such an $N_I$ will have a `built in' continuous sequence
$\<N_\alpha:\alpha\in\omega_1\>$ of countable, elementary substructures, where
the universe of $N_\alpha$ will be $\{[x_{t,n}]:t\in J_\alpha, n\in\omega\}$ for some suitable initial  segment $J_\alpha$ of $I$.  The idea will be to use the data concerning the
pair of models $(M^*,N^*)$ given in Data~\ref{DataA}
to make $$\{\alpha\in\omega_1:I\setminus J_\alpha\ \hbox{has a minimum element\}}$$
(infinitarily) definable in a language $\tau^*$ that codes all of the data mentioned above.

\subsection{The forcing}  \label{defineforcing}

Fix the Data from Data~\ref{DataA}.
Fix a stationary/costationary subset $S\subseteq\omega_1$ and use Construction~\ref{IS} to form  $I^S=(I,<,E,P)\in\II^*$.
We describe three adjectives on a weakly striated model.

\begin{Definition}  {\em  Suppose $N$ is  weakly striated by $(I,<,P,E)$, $J\subseteq I$ suitable, and $b\in N\setminus N_J$.
\begin{itemize}
\item  {\em $b$ $\rk\ \infty$-catches $N_J$} if, $\rk(b/N_J,N)=\infty$ and for every $e\in N$, $e\in\pcl(N_{J}\cup\{b\},N)\setminus N_{J}$ implies $\rk(b/N_J\cup\{e\},N)<\infty$.
\item {\em $b$ admits a cofinal chain in $N_J$} if there is a strictly increasing, cofinal  sequence $\<s_n:n\in\omega\>$ from  $J$ such that for every $n$,
$\pcl(N_{s_n}\cup\{b\},N)\cap N_{s_{n+1}}\neq N_{s_n}$.

\item  {\em $b$ has bounded effect in $N_J$} if there exists $s^*\in J$ such that $\pcl(N_{s}\cup\{b\},N)\cap N_J=N_{s}$ for every $s>s^*$ with $s\in J$.
\end{itemize}
}
\end{Definition}

Note that  any sequence $\<s_n:n\in\omega\>$ witnessing `$b$ admits a cofinal chain' must also satisfy $s_n/E<s_{n+1}/E$.  As well, any infinite subsequence would also
be a witness.
Clearly, if $b$ has bounded effect in $N_J$, then $b$ cannot admit a cofinal chain in $N_J$.

\begin{Proposition}  \label{forcing}  Suppose $N^*,a^*, M^*=\bigcup_{m\in\omega} M_m$, $\dbar_m, c_m$ are from Data~\ref{DataA}
and let $I=I^S$ be from Construction~\ref{IS} with $S$ stationary/costationary.
There is a c.c.c.\ forcing $\QQ_I$ such that in $V[G]$, there is a full, atomic $N_I\models T$
weakly striated by $(I,<)$ such that:
\begin{enumerate}
\item  For every suitable initial segment $J\subseteq I$, $N_J\preceq N_I$;
\item  If  $t\in I$ and $P(t)$ holds,
then $a_{t,0}$ $\rk\ \infty$-catches and admits a cofinal chain in $N_t$; and
\item  If $J\subseteq I$ is seamless, then for every $b\in N_I\setminus N_{J}$,
if $b$ $\rk\ \infty$-catches $N_J$, then $b$ has bounded effect in $N_J$.
\end{enumerate}
\end{Proposition}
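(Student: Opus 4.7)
The plan is to define conditions in $\QQ_I$ as finite ``approximations'' that simultaneously fix (a) a finite subset $F \subseteq X_I$ of the array of variables, (b) a complete (principal) formula $\varphi_F(\bar{x}_F)$ with respect to $T$ that isolates the type of the tuple $\bar{x}_F$, and (c) bookkeeping data that commits each $x_{t,0}$ with $P(t)$ to playing the role of $a^*$ relative to finitely many witnesses $x_{s,n}$ with $s<t$, mirroring the structure of the $c_m$ and $\dbar_m$ from Data~\ref{DataA}. A condition $p\le q$ refines $q$ by possibly enlarging $F$, strengthening $\varphi_F$ to a complete formula on a larger tuple consistent with $\varphi_F$, and extending the bookkeeping witnesses. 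I would verify c.c.c.\ by a standard $\Delta$-system argument: thin an uncountable antichain to a $\Delta$-system on $F_p$; use countability of $T$ to further thin so that all conditions agree on the root formula up to relabeling; then show conditions with identical root are compatible because the extra bookkeeping is finite and the index positions outside the root can be realigned.

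Next come the density arguments. That every $x_{t,n}$ gets a committed type, giving atomicity and fullness of $N_I$, is straightforward from the fact that $N^*$ realizes every complete type over finite tuples: given any consistent target complete formula $\psi$, one extends the condition inside $N^*$. For property~(1), given a suitable initial segment $J\subseteq I$, Tarski--Vaught requires that each non-pseudoalgebraic $q\in S_{at}(\bar{a})$ with $\bar{a}\subseteq N_J$ be realized in $N_J$; since $J$ is endless and is cofinal in its $E$-classes, density gives us an $s\in J$ strictly above all already-committed indices relevant to $\bar{a}$, and we commit $x_{s,0}$ to a type extending $q$. Property~(2) is built in by design: the forcing directly commits, for each $t$ with $P(t)$ (so $t=a_\alpha$ for some $\alpha\in S$), an $\omega$-sequence of witnesses $c^{(t)}_m\in\pcl(N_{s^{(t)}_m}\cup\{x_{t,0}\})$ with $\langle s^{(t)}_m\rangle$ strictly increasing and cofinal in $J_\alpha$; density arranges cofinality, while the $\rk\ \infty$-catching follows from clause~(2) of Data~\ref{DataA} applied locally at $t$, exactly as $a^*$ behaves over $M^*$ in $N^*$.

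The main obstacle is property~(3), which I expect to require the most care. Suppose for contradiction that $J$ is seamless, $b\in N_I\setminus N_J$, $b$ catches $N_J$, yet $b$ fails to have bounded effect: there are cofinally many $s\in J$ with $\pcl(N_s\cup\{b\},N_I)\cap N_J\supsetneq N_s$. The plan is to exploit Lemma~\ref{seamless}: seamlessness provides many order-automorphisms of $I$ fixing any prescribed finite $\SS$ while moving a designated $w\in J$ outside $J$. Because the forcing is designed so that such automorphisms lift to automorphisms of $\QQ_I$ and hence of $N_I$, one may transport witnesses of unbounded effect across the boundary $J$; iterating finitely many such lifts produces an element $e\in N_I$ lying in $\pcl(N_J\cup\{b\},N_I)\setminus N_J$ for which, via clause~(2) of Data~\ref{DataA} transported through the automorphism, $\rk(b/N_J\cup\{e\})=\infty$, contradicting catching. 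Formalizing the lift of $I$-automorphisms to $\QQ_I$-automorphisms without disrupting the bookkeeping tied to $P$-points, and managing the interaction between the commitments for different $t\in I^S$ under such transport, is the delicate step I expect to require the most bookkeeping; the Data~\ref{DataA} property that $N^*$ realizes every non-pseudoalgebraic type over finite tuples in $N^*\setminus M^*$ is precisely what makes the final contradiction go through.
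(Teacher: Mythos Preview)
Your overall architecture matches the paper's: finite conditions carrying a complete formula plus bookkeeping at $P$-levels, c.c.c.\ via $\Delta$-system, density for Henkin/fullness/striation, and property~(3) via automorphisms of $I$ from seamlessness. But two ingredients are missing or misdescribed, and both matter.

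First, your ``bookkeeping data'' is too vague to deliver property~(2). The paper's condition carries, for each $t\in u_p\cap P$, an explicit elementary map $g_1(t):\xbar_p\mr{\le t}\to N^*$ with $g_1(t)(x_{t,0})=a^*$, $g_1(t)[\xbar_p\mr{<t}]\subseteq M^*$, and $g_1(t)[\xbar_p\mr{=t}]\subseteq N^*\setminus M^*$. In the generic, these glue to an isomorphism $(N_t\cup B_t,N_t)\cong(N^*,M^*)$ sending $a_{t,0}$ to $a^*$, and only through this isomorphism does clause~(2) of Data~\ref{DataA} transfer to give $\rk\ \infty$-catching at $t$. Without a concrete template map into $N^*$ you have no mechanism to force that every $e\in\pcl(N_t a_{t,0})\setminus N_t$ satisfies $\rk(a_{t,0}/N_t e)<\infty$; finite conditions cannot see this infinitary statement directly. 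The same maps $g_1(t)$ are also what make the compatibility arguments (hence c.c.c.) go through when one must extend past a $P$-level.

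Second, your sketch of~(3) has the right flavor but the logic is off. Automorphisms of $(I,<,P,E)$ lift to automorphisms of $\QQ_I$, \emph{not} of $N_I$; a $\QQ_I$-automorphism moves the generic, so there is no ``transporting witnesses inside $N_I$.'' The correct argument is purely at the forcing level: if $p$ forces that $b$ $\rk\ \infty$-catches $N_J$ and some $q\ge p$ forces a specific violation of bounded effect at some $s^*\in J$ above $u_p\cap J$, choose $\pi$ (from seamlessness) fixing $u_p$ and the relevant part of $u_q$ below $s^*$ but with $\pi(s^*)\notin J$. Then $\pi'(p)=p$, so in any generic $H\ni\pi'(q)$ both the original and the $\pi$-translated catching statements hold. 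The transported violation yields $e\in N_{\pi(J)}[H]\setminus N_J[H]$ with $e\in\pcl(N_J[H]\,b)$; since $\rk(b/N_{\pi(J)}[H])=\infty$ (catching of $N_{\pi(J)}$) and $e\in N_{\pi(J)}[H]$, one gets $\rk(b/N_J[H]\,e)=\infty$, contradicting catching of $N_J$. There is no iteration, and Data clause~(2) plays no direct role in this step; it is clause~(3) of Data that feeds into the density/compatibility arguments, not into the final contradiction.
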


Recall that by naming a constant if necessary, we are assuming $\pcl(\emptyset,M)\neq\emptyset$.  
Fix  a specific complete formula $\gamma(y)$ isolating a specific type of an element in $\pcl(\emptyset,M)$.
Additionally, fix, for the whole of the proof, some $(I,<,E,P)\in\II^*$.  We aim to construct an atomic model $N_I\in\At$, whose complete diagram consists of
$\{x_{t,n}:t\in I,n\in\omega\}$ that is  weakly striated by $(I,<)$.  
 We first accomplish this via the forcing notion $(\QQ_I,\le_{\QQ})$, defined below.  
Elements of  $\QQ_I$ will record `finite approximations' of this complete diagram.
More precisely:

\begin{Definition} \label{finiteseq} {\em
An {\em approximation  sequence $\xbar$ from $\<x_{t,n}:t\in I, n\in\omega\>$} has the form $\xbar=\<x_{t,m}:t\in u, m<n_t\>$, where
$u\subseteq I$ is finite and $1\le n_t<\omega$ for every $t\in u$.  Given a finite sequence $\xbar$ indexed by $u$ and $\<n_t:t\in u\>$
and given a proper initial segment $J\subseteq I$, let $u\mr J=u\cap J$ and $\xbar\mr J=\<x_{t,m}:t\in u\mr J, m<n_t\>$.  As well, if $p(\xbar)$
is a complete type in the variables $\xbar$, then $p\mr J$ denotes the restriction of $p$ to $\xbar\mr J$, which is necessarily a complete type.
For $s\in I$, the  symbols $u\mr{<s}$ and $\xbar\mr {\le s}$ are defined analogously, setting $J=I\mr {<s}$ and $I\mr {\le s}$, respectively.
}
\end{Definition}

\medskip

\par\noindent {\bf Definition of ${\bf (\QQ_{I},\le_\QQ)}$:}  \quad
 $p\in \QQ_I$ if and only if the following conditions hold:
 \begin{enumerate}

\item $p$ is a complete (principal) type with respect to $T$ in the variables $\xbar_p$, which are a finite sequence indexed by $u_p  \subset  I$ and $n_{p,t}\in\omega$.
In addition, $p$ comes equipped with a pair of functions $\gbar_p=(g_{0,p},g_{1,p})$, each of which have domain $u_p\cap P$. .
(When $p$ is understood we sometimes write $n_t, g_0,g_1$, etc.);

\item Striation constraints:
\begin{enumerate}

\item   $x_{\min(I),0}\in\xbar_p$ and $\gamma(x_{\min(I),0})\in p(\xbar_p)$;
\item If $t=\min(I)$, then $p$ `says' $\{x_{t,n}:n<n_t\}\subseteq\pcl(\emptyset)$;
\item  For all $t\in u_p$, $t\neq\min(I)$, $p$ `says' $x_{t,0}\not\in\pcl(\xbar_p\mr {<t})$; and
\item  For all $s\in u_p$ such that $\neg P(s)$ and $m<n_s$, $p$ `says' $x_{s,m}\in\pcl(\xbar_p\mr {<s}\cup\{x_{s,0}\})$.
\end{enumerate}

\item  For each $t\in u_p\cap P$, $g_{0}(t)$ is a finite approximation to a cofinal sequence below $t$:
\begin{enumerate}
\item  $\dom(g_{0}(t))$ is a positive integer $\ell_{p,t}$ and for $i<\ell_{p,t}$ we write $s_i$ for $g_0(t)(i)$;
\item  Every $s_i\in u_p$ and $\min(I)<s_0/E<s_1/E<\dots<s_{\ell_{p,t}-1}/E<t$;
\item  $s_{\ell_{p,t-1}}$ is in the topmost $E$-class of $u_p$ below $t$; and
\item  $\neg P(s_i)$
\end{enumerate}

\item  For each $t\in u_p\cap P$, $g_1(t)$ gives an `elementary map' from $\xbar_p\mr{\le t}$ into $N^*$.  For $X\subseteq \xbar_p\mr{\le t}$,
let $g_1(t)[X]:=\{g_1(t)(x):x\in X\}$, the image of $g_1(t)\mr X$.  We require that each $g_1(t)$ satisfy:
\begin{enumerate}
\item  For any subset $\wbar\subseteq \xbar_p\mr{\le t}$, and $\tau$-formula $\phi(\wbar)$, $p(\xbar_p)\vdash\phi(\wbar)$ if and only if
$N^*\models\phi(g_1(t)(\wbar))$;
\item  $g_1(t)(x_{t,0})=a^*$;
\item  $g_1(t)[\xbar_p\mr{=t}]\subseteq N^*\setminus M^*$; and
\item  $g_1(t)[\xbar_p\mr{<t}]\subseteq M^*$
\end{enumerate}

\item  Interconnections:  For every $t\in u_p\cap P$ and for each $i<\ell_{p,t-1}$, there is some $m\in\omega$ such that, letting $s_i=g_{0}(t)(i)$ and recalling
Data~\ref{DataA}:
\begin{enumerate}
\item  $\dbar_m\subseteq g_1(t)[\xbar_p\mr{\le s_i/E}]$;
\item  $g_1(t)(x_{s_{i+1},0})=c_m$
\end{enumerate}
\end{enumerate}

For $p,q\in\QQ_I$, we define
$p\le_{\QQ_I} q$ if and only if $\xbar_p\subseteq\xbar_q$, the complete type $p(\xbar_p)$ is the restriction of $q(\xbar_q)$ to $\xbar_p$, and for all
$t\in u_p\cap P$, $g_{0,q}$ end extends $g_{0,p}$, and $g_{1,q}$ extends $g_{1,p}$.

We make the following observations:
\begin{itemize}
\item  Striation constraint 2(a) implies that $\min(I)\in u_p$ for every $p\in\QQ_I$;
\item  For any $p\in\QQ_I$, for each $t\in u_p\cap P$, because of 3(c),  $\neg P(\max(u_p\cap I\mr{<t}))$;
\item  Because of the interplay between 4(c,d), for any $p\in \QQ_I$, for any $x_{t,n},x_{s,m}\in\xbar_p$, if $P(t)$ holds and $s<t$,
then $p\vdash x_{t,n}\neq x_{s,m}$; and
\item  If $t,t'\in u_p\cap P$ with $t'\neq t$, then, other than each of $g_1(t)$ and $g_1(t')$ being elementary maps on their
common domains, there is no assumption of coherence between these maps.  In particular, if $t<t'$, we do not enforce
that $g_1(t)[\xbar_p\mr{<t}]$ be contained in $g_1(t')[\xbar_p\mr{<t'}]$.
\end{itemize}

\subsection{Proof of Proposition~\ref{forcing}}  \label{proof1}

\medskip

We begin this rather lengthy subsection by describing ways of extending conditions,
The first two Lemmas are immediate.

\begin{Lemma}  \label{truncate}
For every $p\in\QQ_I$ and suitable $J\subseteq I$, $p\mr J\in\QQ_I$ and $p\mr J\le_{\QQ_I} p$.
\end{Lemma}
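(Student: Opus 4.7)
The plan is simply to define $p\mr J$ explicitly and then verify each clause of the definition of $\QQ_I$, letting the fact that $J$ is an \emph{initial segment} (not just any suitable subset) do most of the work. I would set $u_{p\mr J}:=u_p\cap J$, $n_{p\mr J,t}:=n_{p,t}$ for $t\in u_{p\mr J}$, and take the type $p\mr J(\xbar_{p\mr J})$ to be the restriction of the complete principal type $p(\xbar_p)$ to the variables $\xbar_{p\mr J}$ (which is a complete principal type on the smaller tuple). For the auxiliary functions, I would set $g_{0,p\mr J}:=g_{0,p}\mr{(u_p\cap J\cap P)}$ and $g_{1,p\mr J}:=g_{1,p}\mr{(u_p\cap J\cap P)}$. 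The key observation, used over and over, is that for every $t\in u_p\cap J$ we have $\xbar_{p\mr J}\mr{<t}=\xbar_p\mr{<t}$ and $\xbar_{p\mr J}\mr{\le t}=\xbar_p\mr{\le t}$, because $J$ is downward closed; consequently the $g_1(t)$'s do not even need to be further restricted.

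Next I would check the conditions one by one, each taking one line. Clause (1) is the completeness of a restriction. Clause (2a) uses that $J$ is nonempty (and is an initial segment), so $\min(I)\in J$; clauses (2c), (2d) transfer verbatim because $\xbar_{p\mr J}\mr{<t}=\xbar_p\mr{<t}$ for $t\in J$. For clause (3) at $t\in u_{p\mr J}\cap P$, the sequence $g_{0,p}(t)=(s_0,\dots,s_{\ell-1})$ of $E$-class representatives below $t$ lies entirely in $J$ since $t\in J$; in particular the ``topmost $E$-class of $u_p$ below $t$'' coincides with that of $u_p\cap J$, so (3c) survives. Clause (4) is immediate since $g_{1,p}(t)$ already has domain $\xbar_p\mr{\le t}=\xbar_{p\mr J}\mr{\le t}$. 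Clause (5) is preserved because the interconnection data at $t\in u_{p\mr J}\cap P$ only references $s_i,s_{i+1}<t$ (hence in $J$) together with the fixed tuples $\dbar_m,c_m$ from $N^*$.

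Finally, $p\mr J\le_{\QQ_I}p$ is immediate from the construction: $\xbar_{p\mr J}\subseteq\xbar_p$, the type $p\mr J$ is by definition the restriction of $p$ to $\xbar_{p\mr J}$, and on the common domain $u_p\cap J\cap P$ the functions $g_{0,p}$ and $g_{1,p}$ agree with (hence trivially end-extend / extend) $g_{0,p\mr J}$ and $g_{1,p\mr J}$. There is no real obstacle; the only points worth flagging are that one uses initial-segment-ness (not just that $J$ be suitable) to preserve the ``below $t$'' constraints, and that $J$ be nonempty so that $\min(I)\in J$ and clause (2a) holds.
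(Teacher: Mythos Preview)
Your verification is correct and is essentially what the paper has in mind: the paper declares this lemma ``immediate'' without proof, and your clause-by-clause check is exactly the routine unpacking that justifies that word. The two points you flag---that downward-closedness of $J$ gives $\xbar_{p\mr J}\mr{<t}=\xbar_p\mr{<t}$ for $t\in J$, and that $J$ must be nonempty so $\min(I)\in u_{p\mr J}$---are precisely the observations that make the verification go through.
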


\begin{Lemma}  \label{extendauto}
Every automorphism $\pi$ of $(I,<,E,P)$ naturally extends to an automorphism $\pi'$ of $\QQ_I$
via the mapping $x_{t,n}\mapsto x_{\pi(t),n}$.
\end{Lemma}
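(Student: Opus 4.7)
The plan is to write down an explicit formula for $\pi'(p)$ on each component of a condition and then verify the clauses (1)--(5) of the definition of $\QQ_I$ and of $\le_{\QQ_I}$ in turn, using that $\pi$ preserves $<, E, P$ (and in particular fixes $\min(I)$, which is order-definable in $(I,<)$).

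Given $p \in \QQ_I$ with data $(\xbar_p, u_p, \langle n_{p,t}\rangle, p(\xbar_p), g_{0,p}, g_{1,p})$, I would set $u_{\pi'(p)} := \pi(u_p)$ and $n_{\pi'(p),\pi(t)} := n_{p,t}$, take $\pi'(p)(\xbar)$ to be the syntactic relabelling of $p(\xbar_p)$ under the substitution $x_{t,n} \mapsto x_{\pi(t),n}$, and define
$g_{0,\pi'(p)}(\pi(t))(i) := \pi\bigl(g_{0,p}(t)(i)\bigr)$ and $g_{1,\pi'(p)}(\pi(t))(x_{\pi(s),n}) := g_{1,p}(t)(x_{s,n})$. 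Because this relabelling never touches the signature $\tau$ of $T$, the type $\pi'(p)$ is still complete and principal, giving (1). Clause (2a) uses only that $\pi$ fixes $\min(I)$; clauses (2b)--(2d) are pseudo-closure statements that depend only on which variables appear and not on their index labels, so they transport automatically. Clause (3) goes through because $\pi$ preserves $<, E, P$, and the phrase ``topmost $E$-class of $u_p$ below $t$'' turns into ``topmost $E$-class of $\pi(u_p)$ below $\pi(t)$''. Clauses (4) and (5) are preserved because the image of $g_{1,\pi'(p)}(\pi(t))$ in $N^*$ literally coincides with that of $g_{1,p}(t)$ on corresponding arguments, so elementarity and the placements in $\{a^*\}$, $M^*$, and $N^* \setminus M^*$ are unchanged, and the same witnesses $(\dbar_m, c_m)$ supplied by Data~\ref{DataA} still serve.

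For the ordering, each of the three ingredients of $\le_{\QQ_I}$ --- containment of the variable tuple, restriction of the type to a sub-tuple, and end or ordinary extension of $g_{0}$ and $g_{1}$ --- is visibly preserved under a bijective relabelling of indices, so $\pi'$ is monotone. Applying the same recipe to $\pi^{-1}$ yields a two-sided inverse $(\pi^{-1})' = (\pi')^{-1}$, and hence $\pi'$ is an order-preserving bijection, i.e., an automorphism of $(\QQ_I,\le_{\QQ_I})$. There is no real obstacle; the only point to pause over is that $\min(I)\in u_p$ forces $\min(I)\in \pi(u_p)$, which holds because $\min(I)$ is fixed by every automorphism of $(I,<,E,P)$, and this is precisely why the authors flag the lemma as immediate.
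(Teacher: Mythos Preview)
Your proposal is correct; the paper itself gives no proof at all, simply declaring the lemma (along with the preceding one) to be ``immediate.'' You have spelled out exactly the natural verification the authors intend: relabel every component of a condition via $x_{t,n}\mapsto x_{\pi(t),n}$, observe that clauses (1)--(5) transport because $\pi$ preserves $<,E,P$ and fixes $\min(I)$, and note that $(\pi^{-1})'$ furnishes the inverse.
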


Our aim is to prove Proposition~\ref{compatible} below, which yields that the forcing is ccc.  This will require a number of preparatory lemmas.

\begin{Definition}  \label{trivialcons}  {\em  A condition $p\in\QQ_I$ is {\em non-trivial} if $u_p\neq\{\min(I)\}$. [Recall that $\min(I)\in u_p$ by
Striation constraint 1(a).]}
\end{Definition}

\begin{Definition}  \label{simple} {\em For $p,q\in\QQ_I$, we say {\em $q$ is a simple extension of $p$} if $q\ge_{\QQ} p$ and
`$u_q$ has no new $E$-classes' i.e., for every $s\in u_q$ there is $s'\in u_p$ such that $E(s,s')$.
}
\end{Definition}

What makes simple extensions simple is that (among other things) $g_0$ cannot be increased.

\begin{Lemma} \label{simplelemma} If $q$ is a simple extension of $p$, then
\begin{enumerate}
\item  $u_q\cap P=u_p\cap P$; and
\item $g_{0,q}=g_{0,p}$.
\end{enumerate}
\end {Lemma}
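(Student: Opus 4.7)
\medskip

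\noindent\textbf{Proof plan for Lemma~\ref{simplelemma}.}
The plan is to deduce both clauses directly from the definition of a simple extension together with two features of the forcing: the fact that elements $t$ with $P(t)$ are $E$-singletons (item 3(a) in the definition of $\II^*$), and the `topmost $E$-class' requirement 3(c) in the definition of $\QQ_I$. Nothing deeper will be needed; the point is just to isolate which conditions do the work.

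For clause (1), the containment $u_p\cap P\subseteq u_q\cap P$ is immediate from $p\le_\QQ q$. For the reverse, fix $t\in u_q\cap P$. Since $q$ is a simple extension of $p$, there exists $s'\in u_p$ with $E(t,s')$. But $P(t)$ holds, and the defining clauses of $\II^*$ guarantee that $t/E=\{t\}$. Hence $s'=t$, so $t\in u_p\cap P$.

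For clause (2), fix $t\in u_p\cap P= u_q\cap P$; the relation $p\le_\QQ q$ forces $g_{0,q}(t)$ to be an end-extension of $g_{0,p}(t)$, so it suffices to rule out a proper end-extension. Suppose toward a contradiction that $\ell_{q,t}>\ell_{p,t}$, and set $s_i:=g_{0,q}(t)(i)$ for $i<\ell_{q,t}$. By condition 3(b) for $q$, the $E$-classes $s_i/E$ are strictly increasing below $t$, so $s_{\ell_{p,t}}/E$ is strictly above $s_{\ell_{p,t}-1}/E$ and strictly below $t/E=\{t\}$; in particular $s_{\ell_{p,t}}<t$. Now simplicity of the extension supplies some $s'\in u_p$ with $E(s_{\ell_{p,t}},s')$, and since $s_{\ell_{p,t}}/E<t$ (as an $E$-class) we have $s'<t$ as well. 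But then $s'\in u_p$ lies strictly below $t$ with $s'/E>s_{\ell_{p,t}-1}/E$, contradicting condition 3(c) for $p$, which demands that $s_{\ell_{p,t}-1}$ inhabit the topmost $E$-class of $u_p$ below $t$.

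I do not expect any obstacle; the main thing to be careful about is not conflating `$s<t$' with `$s/E<t/E$', which works smoothly here only because $P(t)$ collapses $t/E$ to a singleton.
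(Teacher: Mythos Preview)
Your proof is correct and follows essentially the same approach as the paper's: clause (1) via $P(t)\Rightarrow t/E=\{t\}$, and clause (2) by combining the end-extension requirement with conditions 3(b) and 3(c) and the ``no new $E$-classes'' clause of simplicity. The only cosmetic difference is that the paper phrases (2) by observing that simplicity forces the topmost $E$-class of $u_q$ below $t$ to coincide with that of $u_p$, whereas you run the contrapositive directly on the first new value $s_{\ell_{p,t}}$; the content is the same.
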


Proof.  (1) is immediate, since $P(t)$ implies that $t/E=\{t\}$.
(2)  First, because of (1), $g_{0,q}$ and $g_{0,p}$ have the same domain.
However, for any $t\in u_p\cap P$, the definition of $q\ge_\QQ p$ implies that $g_{0,q}(t)$ end extends $g_{0,p}(t)$.
In addition, the `last element' $g_{0,q}(t)(\ell_{q,t})$ is an element of the largest $E$-class below $t$.
But, by simplicity, the largest $E$-class of $p$ below $t$ is equal to the largest $E$-class below $t$.  As Condition 3(b) implies
that the $s_i/E$ are strictly increasing, we must have $g_{0,q}(t)=g_{0,p}(t)$.
$\qed_{\ref{simplelemma}}$

\begin{Definition}  \label{partitioned}  {\em  Suppose that  $\phi(y,\zbar)$ is a complete formula with respect to $T$ and $p\in\QQ_I$.
We say $\phi(y,\zbar)$ {\em includes $p$} if $\zbar=\xbar_p$ and $\phi(y,\zbar)\vdash\ p(\xbar_p)$.  For each $w\in u_p$,
define $\phi_w(y,\xbar_p\mr{\le w})$ to be the restriction of $\phi$ to the displayed variables.
  Clearly, each such $\phi_w(y,\xbar_p\mr{\le w})$
is a complete formula that includes $p\mr{\le w}$.

As $p(\xbar_p)$ describes a complete type and the relation `$a\in\pcl(\bbar,M)$' only depends on the complete type $\tp(a\bbar,M)$,
we say that a complete formula $\phi(y,\xbar_p)$ that includes $p$ is {\em pseudo-algebraic} if for some/every $M\in\At$ and for some/every
$\bbar$ from $M$ realizing $p(\xbar_p)$, $\phi(y,\bbar)$ is pseudo-algebraic in $M$.
}
\end{Definition}

\begin{Remark}  \label{useremark}  {\em  Note that if $\phi(y,\xbar_p)$ is not pseudo-algebraic and $\theta(\zbar,\xbar_p)$ is any complete formula, then there is a
complete, non-pseudo-algebraic $\psi(y,\zbar,\xbar)$ extending $\phi(y,\xbar_p)\wedge\theta(\zbar,\xbar_p)$.  To see this, choose any
$c\abar$ realizing $\phi(y,\xbar_p)$ in $N$.  As $\phi(y,\abar)$ is not pseudo-algebraic, choose $M\preceq N$ containing $\abar$ but not $c$.
Choose any $\ebar$ from $M$ witnessing $\theta(\ebar,\abar)$.  Then the complete formula $\psi$ isolating $\tp(c,\ebar,\abar)$ in $N$ suffices.
}
\end{Remark}

\begin{Lemma}  \label{bigHenkin}
Suppose $p\in\QQ_I$  is non-trivial and $\phi(y,\xbar_p)$ includes $p$.  Then there is a one-point, simple extension $q$ of $p$ such that:
\begin{enumerate}
\item  $\xbar_q=\{x_{s,m}\}\cup\xbar_p$ for some $(s,m)\in I\times\omega$;
\item  $q(\xbar_q)=\phi(x_{s,m},\xbar_p)$;
\item  for every $w\in u_p$, if $\phi_w(y,\xbar_p\mr{\le w})$ is pseudo-algebraic, then $s\le w$; and
\item  if $\phi(y,\xbar_p)$ is not pseudo-algebraic and $u_p\subseteq J$ for some endless $J$, then  we can require $s\in J$ as well.
\end{enumerate}
Moreover, given any sequence $\<(t,c_t):t\in u_p\cap P, c_t\in N^*\>$ such that for each $t$, $\tp(c_t,g_{1,p}(t)[\xbar_p\mr{\le t}],N^*)$ contains
$\phi_t(y,\xbar_p\mr{\le t})$, we can find $q$ as above with the additional property that $g_{1,q}(x_{s,m})=c_t$ for all $t\ge s$.
\end{Lemma}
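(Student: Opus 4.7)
The plan is to pick a position $(s,m)\in I\times\omega$ for the new variable, then set $\xbar_q:=\xbar_p\cup\{x_{s,m}\}$, $q(\xbar_q):=\phi(x_{s,m},\xbar_p)$, $g_{0,q}:=g_{0,p}$, and extend each $g_{1,p}(t)$ for $t\in u_p\cap P$ with $t\ge s$ by sending $x_{s,m}\mapsto c_t$. Simplicity of the extension is built into the choice: either $s\in u_p$ (so $u_q=u_p$), or $s$ lies in $t_0/E$ for some $t_0\in u_p$ with $\neg P(t_0)$, so that no new $E$-class is introduced. The key analytical object is
\[
W:=\{w\in u_p:\phi_w(y,\xbar_p\mr{\le w})\text{ is pseudo-algebraic}\},
\]
which is upward-closed in $u_p$ because $\xbar_p\mr{\le w}$ grows with $w$; when $W\ne\emptyset$, set $w^*:=\min W$.

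\medskip

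If $\phi$ is pseudo-algebraic, I take $s:=w^*$ and $m:=n_{p,w^*}$; minimality of $w^*$ yields $s\le w$ for every $w\in W$, satisfying clause (3). When $\neg P(w^*)$, the fact $w^*\in W$ combined with striation (2)(d) applied to $p$ gives $y\in\pcl(\xbar_p\mr{\le w^*})=\pcl(\xbar_p\mr{<w^*}\cup\{x_{w^*,0}\})$, verifying (2)(d) for the new variable. If $\phi$ is not pseudo-algebraic then $W=\emptyset$ and clause (3) is vacuous; non-triviality of $p$ together with the $g_0$-constraint 3(d) supplies a $\neg P$ element $t_0\in u_p\setminus\{\min(I)\}$, which may be chosen inside $J$ when clause (4) is invoked. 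Using that $t_0/E$ is a countable dense linear order without endpoints and that $J$ is convex and endless, I place $s\in(t_0/E\cap J)\setminus u_p$ and set $m:=0$; since $\xbar_q\mr{<s}=\xbar_p\mr{<s}\subseteq\xbar_p$, clause (2)(c) follows from $y\notin\pcl(\xbar_p)\supseteq\pcl(\xbar_p\mr{<s})$, while (2)(d) is vacuous at $m=0$.

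\medskip

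With $(s,m)$ in hand, the remaining verifications are mostly bookkeeping: $q(\xbar_q)$ is complete because $\phi$ is; clauses (2)(a,b), (3), and (5) carry over from $p$ because $g_0$, $u_p\cap P$, and the $E$-class structure of $u_p$ are preserved; and elementarity (4)(a) for the extended $g_{1,q}(t)$ holds because $\tp(c_t,g_{1,p}(t)[\xbar_p\mr{\le t}],N^*)$ is the principal type generated by $\phi_t$, using atomicity of $N^*$ together with completeness of $\phi_t$. The substantive obstacle is (4)(c) and (4)(d), which demand $c_t\in N^*\setminus M^*$ when $s=t$ and $c_t\in M^*$ when $s<t$---placement constraints not made explicit in the type hypothesis and which must be realized within the given complete type. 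When $s<t$ the parameter set $g_{1,p}(t)[\xbar_p\mr{\le t}]$ already lies in $M^*\cup\{a^*\}$ by (4)(c,d) for $p$, so Remark~\ref{useremark} applied inside the elementary pair $M^*\preceq N^*$ produces a suitable $c_t\in M^*$; when $s=t\in P$ one must produce a realization of $\phi_t(y,g_{1,p}(t)[\xbar_p\mr{\le t}])$ in $N^*\setminus M^*$, and this is where the chain data of Data~\ref{DataA}, in particular that the $c_m\in\pcl(a^*\dbar_m,N^*)\setminus M^*$ lie outside $M^*$ while remaining pseudo-algebraic over $a^*\dbar_m$, is essential.
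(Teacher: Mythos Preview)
The core gap is that your choice of $(s,m)$ is determined solely by the type data (whether $\phi$ is pseudo-algebraic, and the value of $w^*$), whereas the paper's proof shows it must also depend on the given elements $c_t$. The Moreover clause hands you the $c_t$'s and demands $g_{1,q}(t)(x_{s,m})=c_t$ for every $t\in u_p\cap P$ with $t\ge s$; Constraints~(4)(c,d) then force $c_t\in N^*\setminus M^*$ exactly when $t=s$, and $c_t\in M^*$ when $t>s$. In your pseudo-algebraic case with $P(w^*)$ you always set $s=w^*$, which requires $c_{w^*}\in N^*\setminus M^*$; but the hypothesis only fixes $\tp(c_{w^*}/g_{1,p}(w^*)[\xbar_p\mr{\le w^*}])$, and since that parameter set meets $N^*\setminus M^*$ there is no reason the given $c_{w^*}$ cannot lie in $M^*$. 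The paper splits exactly here: if $c_{w^*}\in N^*\setminus M^*$ it takes $s=w^*$ (Case~2b), while if $c_{w^*}\in M^*$ it places $s$ strictly below $w^*$, in the topmost $E$-class of $u_p$ below $w^*$ (Case~2c). The same bifurcation arises in the non-pseudo-algebraic case when $P(\max(u_p))$ holds (the two subcases of Case~1b). Your final paragraph inverts the quantifier---you speak of \emph{producing} a suitable $c_t\in M^*$ via Remark~\ref{useremark}, or a realization in $N^*\setminus M^*$ from the chain data of Data~\ref{DataA}---but the $c_t$'s are prescribed, not chosen.

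There is a related secondary issue. Whenever $s$ sits strictly below some $t^*\in u_p\cap P$ (as happens in your non-pseudo-algebraic case whenever $P(\max(u_p))$, since your $t_0$ then lies below $t^*$), you must re-verify Striation~(2)(c) at $t^*$, namely that $q$ says $x_{t^*,0}\notin\pcl(\xbar_p\mr{<t^*}\cup\{x_{s,0}\})$. This does not follow from~(2)(c) for $p$, and since $\phi$ is given you cannot arrange it by tailoring the type. The paper's verification (Case~1b second subcase, Case~2c) uses precisely that $c_{t^*}\in M^*$ together with $g_{1,p}(t^*)(x_{t^*,0})=a^*\notin M^*$: elementarity of $g_{1,q}(t^*)$ then witnesses the required non-pseudo-algebraicity. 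So this check, too, hinges on the placement-versus-$c_t$ interaction that your argument omits.
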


Proof.  Our proof will split into several cases.  However,  in all cases, as we are requiring $q$ to be a simple extension, we must have  $u_q\cap P=u_p\cap P$ and
$g_{0,q}=g_{0,p}$. Consequently, since $p\in\QQ_I$, Constraint groups~3~and~5 will be automatically satisfied for $q$.

As $p$ is non-trivial, $\max(u_p)\neq\min(I)$.

\medskip\par\noindent{\bf Case 1a:}  $\phi(y,\xbar_p)$ is not pseudo-algebraic and $\neg P(\max(u_p))$.

\medskip  Let $s'=\max(u_p)$.  Given any endless $J$ with $u_p\subseteq J$, as $s'/E$ is dense we can choose
$s\in s'/E$ such that $s\in J$, but $s>w$ for all $w\in u_p$.  Take $m=0$.  Let $\xbar_q=\{x_{s,0}\}\cup\xbar_p$ and let $q(\xbar_q)=\phi(x_{s,0},\xbar_p)$.
Take $\gbar_q=\gbar_p$ and there is really nothing to check.

\medskip\par\noindent{\bf Case 1b:}  $\phi(y,\xbar_p)$ is not pseudo-algebraic and $P(\max(u_p))$ holds.

\medskip  Let $t^*=\max(u_p)$.
We now consider two Subcases, depending on our choice of sequence $\<(t,c_t):t\in u_p\cap P\>$ in the `Moreover clause.'

\medskip\par\noindent{\bf Subcase:}  $c_{t^*}\in N^*\setminus M^*$.  Here, let $(s,m)=(t^*,n_{t^*})$.  Put $\xbar_q=\{x_{t^*,n_{t^*}}\}\cup\xbar_p$ and $q(\xbar_q)=\phi$.
For $t\in u_p\cap P$ with $t<t^*$, put $g_{1,q}(t)=g_{1,p}(t)$ and there is nothing to check.  Finally, let $g_{1,q}(t^*)$ be the one-element extension of
$g_{1,p}(t^*)$ formed by putting $g_{1,q}(t^*)(n_{t^*})=c_{t^*}$.  As we are adding a new point at a level where $P$ holds, the Striation constraints are trivially satisfied,
and everything is easy.

\medskip\par\noindent{\bf Subcase:}  $c_{t^*}\in M^*$.  Here, Constraint 4(c) forbids us from putting the new element at level $t^*$.
Let $s'=g_{0,p}(t^*)(\ell_{t^*}-1)$.  Then $\neg P(s')$, and $s'/E$ is the maximal $E$-class represented in $u_p$ below $t^*$.
Choose $s\in I$ such that $E(s,s')$ holds, but $s>w$ for every $w\in u_p\setminus\{t^*\}$.  Take $m=0$.
That is, $\xbar_q=\{x_{s,0}\}\cup\xbar_p$ and put $q(\xbar_q)=\phi$.  As above, put $g_{1,q}(t)=g_{1,p}(t)$ for every $t\in u_p\cap P$ with $t<t^*$.
Finally, let $g_{1,q}(t^*)$ be the one-point extension of $g_{1,p}(t^*)$ formed by putting $g_{1,q}(t^*)(x_{s,0})=c_t$.

The non-trivial point to check is that this extension $q$ preserves the Striation constraints.
To see this, for $w\in u_p\setminus\{t^*\}$ there is nothing to check.  As $\phi(y,\xbar_p)$ is non-pseudo-algebraic, $x_{s,0}\not\in \pcl(\xbar_q\mr{<s})$.
And finally, as $g_{1,p}(t^*)(x_{t^*,0})=a^*\not\in M^*$, while $\{c_t\}\cup g_{1,p}(t^*)[\xbar_p\mr{<t^*}]\subseteq M^*$, we conclude that
$x_{t^*,0}\not\in\pcl(\xbar_q\mr{<t^*})$.

\medskip  {\em For the remainder of the proof, assume that $\phi(y,\xbar)$ is pseudo-algebraic.}  Indeed, let $w^*\in u_p$ be least such that  the restriction
$\phi_{w^*}(y,\xbar_p\mr{\le w^*})$ is pseudo-algebraic.  In each of the cases below, we will either take $s=w^*$, or $s$ will be less than $w^*$ but greater
than any $w\in u_p$ with $w<w^*$.  Thus, the Striation constraints for each $w\in u_q$ with $w<s$ will be trivially satisfied since they hold for $p$.
Also, since $\phi_{w^*}(y,\xbar_p\mr{\le w^*})$ is pseudo-algebraic, for every $w\in u_q$ with $w>w^*$ we will have $\pcl(\xbar_q\mr{\le w})=\pcl(\xbar_p\mr{\le w})$.
It follows from this that the Striation constraints for $q$ are satisfied for every $w\in u_q$ with $w>w^*$.  Because of this, in each of the cases below,
we only need to establish the Striation constraints for $q$ at levels $s$ and $w^*$.

\medskip\par\noindent{\bf Case 2a:}   $\neg P(w^*)$.

\medskip  In this case, let $(s,m)=(w^*,n_{p,w^*})$.  Take $\xbar_q=\{x_{s,m}\}\cup\xbar_p$ and $q(\xbar_q)=\phi$.  Note that since $\phi_{w^*}(y,\xbar_p\mr{\le w^*})$ is
pseudo-algebraic, the Striation constraints are satisfied for $q$ at level $w^*$, hence at all levels by the comments above.
To complete the description of $q$, for $t<w^*$, put $g_{1,q}(t)=g_{1,p}(t)$.  As well, given any sequence $\<(t,c_t):t \in u_p\cap P\>$ satisfying the `Moreover clause,'
for each $t>w^*$, let $g_{1,q}(t)$ be the one-point extension of $g_{1,p}(t)$ formed by putting $g_{1,q}(t)=c_t$.
Constraint group~4 is trivially satisfied for $q$ for each $t<w^*$.  So fix $t>w^*$.  By Constraint 4(d) on $p$, we have that $A=g_{1,p}(t)[\xbar_p\mr{<t}]\subseteq M^*$.
However,  since $t>w^*$, we also know that the restriction of $\phi$ to the variables $(y,\xbar_p\mr{<t})$ is pseudo-algebraic.  Thus, it follows that $c_t\in M^*$.
So Constraint~4(d) holds for $g_{1,q}(t)$ as well.  The other conditions are trivially satisfied, so $q\in\QQ_I$.

\medskip\par\noindent{\bf Case 2b:}   $P(w^*)$ holds and, letting $t^*=w^*$, $c_{t^*}\in N^*\setminus M^*$.

\medskip  In this case, we can place the new element at level $t^*$.  That is, $\xbar_q=\{x_{t^*,m}\}\cup\xbar_p$, where $m=n_{p,t^*}$ and $q(\xbar_q)=\phi$.
As $P(t^*)$ holds, the Striation constraints  at level $t^*$ hold for $q$ as they held for $p$.  As noted above, this implies that the Striation constraints hold for $q$ at all
levels.  Now, for $t<t^*$, let $g_{1,q}(t)=g_{1,p}(t)$ and for $t\ge t^*$, let $g_{1,q}(t)$ be the one-point extension of $g_{1,p}(t)$ formed by putting $g_{1,q}(t)(x_{t^*,m})=c_t$.
We must verify that Constraint group~4 is satisfied.  For $t<t^*$, this is trivial.  At level $t^*$, there is no problem as $c_{t^*}\in N^*\setminus M^*$.
For levels $t>t^*$, we argue just as in Case~2a) above.  That is, since $\phi_{t^*}(y,\xbar_p\mr{\le t^*})$ is pseudo-algebraic, we must have that $c_{t^*}\in M^*$.
Thus, there is no problem.

\medskip\par\noindent{\bf Case 2c:}   $P(w^*)$ holds and, letting $t^*=w^*$, $c_{t^*}\in M^*$.

\medskip  As in the second Subcase above, Condition 4(c) forbids us from adding the new element at  level $t^*$.  Let $s'\in u_p$ be maximal below $t^*$.
As $\neg P(s')$ holds, $s'/E$ is dense linear order, so we can choose $s\in s'/E$ with $s>s'$.  Let $\xbar_q=\{x_{s,0}\}\cup\xbar_p$ and $q(\xbar_q)=\phi$.
We need to check the Striation constraints at levels $s$ and $w^*$.  At level $s$, note that the minimality of $w^*=t^*$ implies that $x_{s,0}\not\in \pcl(\xbar_q\mr{<s})$, so we
are fine at level $s$.  At level $w^*=t^*$, note that $A=g_{1,p}(t^*)[\xbar_p\mr{<t^*}]\subseteq M^*$ and we are assuming $c_{t^*}\in M^*$.  As well, $g_{1,p}(t^*)(x_{t^*,0})=a^*$,
which is in $N^*\setminus M^*$.  Thus, the elementarily of the map assumed by the `Moreover clause' implies that $x_{t^*,0}\not\in\pcl(\xbar_q\mr{<t^*})$, so
the Striation constraints are satisfied at level $t^*$ as well.

Finally, we complete the description of $q$ by putting $g_{1,q}(t)=g_{1,p}(t)$ for all $t<t^*$, and for $t\ge t^*$, let $g_{1,q}(t)$ be the one-point extension of $g_{1,p}(t)$ given
by $g_{1,q}(t)(x_{s,0})=c_t$.  We need to show that Constraint group~4 is maintained.  This is trivial for all $t<t^*$.  At level $t^*$, it is satisfied because of our
assumption that $c_{t^*}\in M^*$.  Finally, fix any $t>t^*$ and recall that $A=g_{1,p}(t)[\xbar_p\mr{<t}]\subseteq M^*$.  As $x_{s,0}\in\pcl(\xbar_p\mr{<t})$, this means
that  $c_t\in\pcl(A,N^*)$, so $c_t\in M^*$, as required in 4(d).
$\qed_{\ref{bigHenkin}}$

Although Lemma~\ref{bigHenkin} is very strong, we still need some technique for producing extensions that need not be simple.  [Indeed, if $p$ is trivial, then
Lemma~\ref{bigHenkin} does not apply to $p$ at all.]

\begin{Lemma}  \label{alternate}
Suppose $p\in \QQ_I$ and $s\in I$ is chosen so that $s'<s$  for every $s'\in u_p$ and $\neg P(s)$.  Let $\phi(y,\xbar_p)$ be any complete formula including $p$
that is not pseudo-algebraic.
There is $q\in\QQ_I$ with $q\ge p$, $\xbar_q=\{x_{s,0}\}\cup\xbar_p$, and $q(\xbar_q)=\phi$.  \end{Lemma}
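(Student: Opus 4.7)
The plan is to define $q$ by the most economical choice, then check each clause of the definition of $\QQ_I$.  Concretely, set $u_q := u_p \cup \{s\}$, $n_{q,s} := 1$, $n_{q,t} := n_{p,t}$ for $t \in u_p$, and $\xbar_q := \{x_{s,0}\} \cup \xbar_p$; take $q(\xbar_q) := \phi(x_{s,0},\xbar_p)$; and keep $\gbar_q := \gbar_p$, i.e.\ $g_{0,q} := g_{0,p}$ and $g_{1,q} := g_{1,p}$.  The motivation for leaving $\gbar$ untouched is that, by hypothesis, $\neg P(s)$, so $u_q \cap P = u_p \cap P$ and hence the domains of $g_{0,q}$ and $g_{1,q}$ are exactly those of the originals.

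The verification then proceeds clause by clause.  Clause 1 is immediate since $\phi$ is complete and includes $p$.  For the Striation constraints in Clause 2, at every $t \in u_p$ we have $\xbar_q \mr{<t} = \xbar_p \mr{<t}$ and $\xbar_q \mr{\le t} = \xbar_p \mr{\le t}$ (because $s > s'$ for every $s' \in u_p$), so constraints 2(a), 2(b), 2(c) and 2(d) at levels in $u_p$ are inherited verbatim from $p$.  The only new level to check is $t = s$: clause 2(c) requires $x_{s,0} \notin \pcl(\xbar_p)$, and this is exactly what the hypothesis that $\phi$ is not pseudo-algebraic gives; clause 2(d) at $s$ is trivial since $n_{q,s} = 1$.

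For Clauses 3, 4, and 5, since $\dom(g_{0,q}) = \dom(g_{0,p}) = u_p \cap P$ and every $t \in u_p\cap P$ lies strictly below $s$, the values $g_{0,p}(t)$ still list the same $E$-classes of $u_q$ below $t$ (no new $E$-class has appeared \emph{below} any $t \in u_p \cap P$), so Clause 3 is preserved.  For Clause 4, each $g_{1,q}(t) = g_{1,p}(t)$ has domain $\xbar_q \mr{\le t} = \xbar_p \mr{\le t}$ and, since $q \mr{\le t} = p \mr{\le t}$ as complete types, remains an elementary map into $N^*$ with all the required containments in $N^* \setminus M^*$ and $M^*$.  Clause 5 depends only on levels in $u_p \cap P$ and the unchanged $g$'s, so it is inherited.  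Finally, $q \ge_\QQ p$: $\xbar_p \subseteq \xbar_q$, the restriction $q(\xbar_q) \mr{\xbar_p}$ equals $p(\xbar_p)$ because $\phi$ includes $p$, and $\gbar$ is unchanged.

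There is really no obstacle beyond the bookkeeping: the single substantive use of a hypothesis is non-pseudo-algebraicity of $\phi$, needed to secure Striation constraint 2(c) at the new top level $s$; everything else is forced by the choice of $s$ above all of $u_p$ with $\neg P(s)$, which ensures no $g_0$ or $g_1$ data has to be extended or altered.
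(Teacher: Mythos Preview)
Your proof is correct and follows exactly the same approach as the paper's own proof: define $q$ by adjoining $x_{s,0}$ with $q(\xbar_q)=\phi$ and $\gbar_q=\gbar_p$, then observe that since $s$ lies above every element of $u_p$ and $\neg P(s)$, we have $u_q\cap P=u_p\cap P$ and $q\mr{\le t}=p\mr{\le t}$ for every $t\in u_p\cap P$, so all constraints are inherited except the new Striation constraint 2(c) at level $s$, which follows from non-pseudo-algebraicity of $\phi$. The paper's proof is a three-sentence sketch of precisely this; you have simply spelled out the clause-by-clause verification in more detail.
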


Proof. Simply put $\xbar_q=\{x_{s,0}\}\cup\xbar_p$ and $q(\xbar_q)=\phi$.  As $u_q=u_p\cup \{s\}$ and as $q\mr{\le t}=p\mr{\le t}$
for every $t\in u_p\cap P$, we can put $\gbar_q=\gbar_p$.  That $q$ satisfies the Striation constraints is immediate as $\phi$ is not pseudo-algebraic.
Thus, $q\in\QQ_I$ and $q\ge p$ as required.
$\qed_{\ref{alternate}}$

\begin{Definition}  \label{s-toppedD}
{\em  A non-trivial $p\in\QQ_I$ is {\em s-topped} if  $\neg P(\max(u_p))$.
}
\end{Definition}

Note that for any $p\in\QQ_I$ and any $t\in u_p\cap P$,  Clauses 3(c),(d) in the definition of the forcing imply that $p\mr{<t}$ is s-topped.

\begin{Lemma}  \label{s-toppedL}
Given any $p\in\QQ_I$ and any $s'\in I$ such that $u_p\subseteq I\mr{<s'}$, there is an s-topped $q\ge p$ with $u_q\subseteq I\mr{<s'}$.
\end{Lemma}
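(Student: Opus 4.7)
The plan is to split on the top of $u_p$ and reduce everything to a single application of Lemma~\ref{alternate}. First, if $p$ is already non-trivial with $\neg P(\max(u_p))$ then $p$ is itself s-topped and we take $q:=p$. Otherwise set $t^* := \max(u_p)$ when $p$ is non-trivial, and $t^* := \min(I)$ when $p$ is trivial; in each case our goal is to adjoin a single new point $x_{s,0}$ at a carefully chosen $\neg P$-level $s$ with $t^* < s < s'$.

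To locate $s$, I would use that $t^*/E$ is a singleton $E$-class: in the trivial case because $\min(I)/E = \{\min(I)\}$, in the other remaining case because $P(t^*)$ forces $t^*/E = \{t^*\}$. Combined with $t^* < s'$, this gives $t^*/E < s'/E$ in the condensation $I/E$. By the density clause in the definition of $\II^*$ (both $\{t/E : P(t)\}$ and $\{t/E : \neg P(t)\}$ are dense in $I/E$), I may pick a $\neg P$-class $C$ with $t^*/E < C < s'/E$ and any $s \in C$; then $t^* < s < s'$ and $\neg P(s)$.

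For the formula, I would next exhibit a complete, non-pseudo-algebraic $\phi(y,\xbar_p)$ that includes $p$. Since $M^* \preceq N^*$ is atomic, the principal type $p(\xbar_p)$ is realized by some tuple $\bbar \subseteq M^*$. By Data~\ref{DataA}(3), some $c \in N^* \setminus M^*$ realizes a non-pseudoalgebraic type over $\bbar$; by atomicity of $N^*$ this type is isolated by a complete formula $\phi(y,\xbar_p)$, which is the required non-pseudo-algebraic completion of $p$. I would then apply Lemma~\ref{alternate} to $p$, the index $s$, and the formula $\phi$ to produce $q \ge p$ with $\xbar_q = \{x_{s,0}\} \cup \xbar_p$ and $q(\xbar_q) = \phi$.

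Finally I would verify the conclusion: $u_q = u_p \cup \{s\} \subseteq I\mr{<s'}$ since $s < s'$; $\max(u_q) = s$ with $\neg P(s)$; and $u_q \supsetneq \{\min(I)\}$ because $s \ne \min(I)$, so $q$ is non-trivial and hence s-topped. The only real obstacle is ensuring that the required index $s$ exists in all configurations; the density of $\neg P$-classes in $I/E$ together with the singleton nature of $t^*/E$ handles this uniformly. The rest is bookkeeping that Lemma~\ref{alternate} absorbs.
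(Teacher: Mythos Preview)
Your proposal is correct and follows essentially the same route as the paper: handle the already s-topped case trivially, otherwise produce a non-pseudo-algebraic complete $\phi(y,\xbar_p)$ including $p$ and invoke Lemma~\ref{alternate} at a $\neg P$-level $s$ strictly between $\max(u_p)$ and $s'$. The paper is terser---it takes $\delta(y)$ isolating $\tp(a^*,N^*)$ and cites Remark~\ref{useremark} to get $\phi$, leaving the choice of $s$ implicit---whereas you spell out the density argument in $I/E$ and obtain $\phi$ by realizing $p$ in $M^*$ and picking $c\in N^*\setminus M^*$; note that your appeal to Data~\ref{DataA}(3) is not actually needed there, since any $c\in N^*\setminus M^*$ (for instance $a^*$ itself) already has non-pseudo-algebraic type over $\bbar\subseteq M^*$.
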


Proof.  Choose any $p\in\QQ_I$.  If $p$ is s-topped, take $q=p$.  Otherwise, recall that $\tp(a^*,N^*)$ is not pseudo-algebraic.  Let $\delta(y)$ be the complete formula
generating $\tp(a^*,N^*)$.  By Remark~\ref{useremark}, choose  $\phi(y,\xbar_p)$  to
be complete, include $p$,  extend $\delta(y)$, but not be pseudo-algebraic.  Then apply Lemma~\ref{alternate} with this $\phi$ to get $q\ge p$ as required.
$\qed_{\ref{s-toppedL}}$

\begin{Lemma}  \label{addN*}
For every $p\in\QQ_I$, for every endless $J\supseteq u_p$, for every $t\in u_p\cap P$, and for every  finite $C\subseteq N^*$, there is a simple extension $q$ of $p$
satisfying $u_q\subseteq J$ and $C\subseteq range[g_{1,q}(t)]$.
\end{Lemma}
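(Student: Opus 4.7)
The plan is to induct on $n = |C \setminus \mathrm{range}[g_{1,p}(t)]|$, the case $n = 0$ being immediate with $q = p$; it suffices to prove the one-point step, namely that a single $c^* \in N^* \setminus \mathrm{range}[g_{1,p}(t)]$ can be absorbed via a one-point simple extension $p_1 \ge_{\QQ} p$ with $u_{p_1} \subseteq J$ and $c^* \in \mathrm{range}[g_{1,p_1}(t)]$. To build $p_1$, I apply Lemma~\ref{bigHenkin} to a carefully constructed complete formula $\phi(y, \xbar_p)$: using the homogeneity of $N^*$ over finite parameter sets, pick a realization $\bbar$ of $p(\xbar_p)$ in $N^*$ with $\bbar\mr{\le t} = g_{1,p}(t)[\xbar_p\mr{\le t}]$ (the subtuple $\bbar\mr{>t}$ is still free to select), and let $\phi$ isolate $\tp((c^*, \bbar)/\emptyset, N^*)$. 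The Moreover clause is fed the sequence $\langle (t', c_{t'}) : t' \in u_p \cap P\rangle$ with $c_t = c^*$, and for $t' \neq t$ the element $c_{t'} \in N^*$ realizes $\phi_{t'}(y, g_{1,p}(t')[\xbar_p\mr{\le t'}])$, chosen to lie in $M^*$ when $s < t'$ is anticipated (per Constraint 4(d)) or in $N^* \setminus M^*$ when $s = t'$ (per 4(c)).

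Two cases arise. \emph{(i) $c^* \in N^* \setminus M^*$:} Constraint 4(c,d) forces the placement level $s = t$. Since $g_{1,p}(t)[\xbar_p\mr{<t}] \subseteq M^*$ and the pseudo-closure in $N^*$ of any $M^*$-subset stays inside $M^*$, no restriction $\phi_w$ with $w < t$ can be pseudo-algebraic; in Lemma~\ref{bigHenkin}'s notation, $w^* \ge t$. I then tune $\bbar\mr{>t}$ to land in Case 1b (subcase $c_{t^*} \in N^*\setminus M^*$) when $t = \max(u_p)$, or in Case 2b when $t < \max(u_p)$, each of which yields $s = t$. \emph{(ii) $c^* \in M^*$:} I target a placement $s \in s'/E \cap J$, where $s' = g_{0,p}(t)(\ell_{p,t}-1) \in u_p$ is the topmost $\neg P$-element below $t$; this corresponds to Lemma~\ref{bigHenkin}'s Case 1b (second subcase) when $\max(u_p) = t$, and Case 2c after suitably arranging pseudo-algebraicity. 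Because $s'/E$ is a countable dense linear order without endpoints and $J$ is an endless down-set containing $s'$, one can pick $s \in s'/E \cap J$ with $s \ne s'$ and $s \notin u_p$ (selecting $s < s'$ if no element of $s'/E \cap J$ sits above $s'$), so the apparent requirement `$s > s'$' in Case 2c is inessential in our setting.

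The main obstacle is coordinating the choice of $\bbar\mr{>t}$ so that Lemma~\ref{bigHenkin} lands in the intended case (which places $s$ at a level $\le t$ and inside $J$), while simultaneously ensuring the Moreover-clause realizations $c_{t'}$ for $t' \ne t$ can be placed on the correct side of the $M^*/(N^* \setminus M^*)$ split at each level. This coordination is carried out by iterated appeals to the homogeneity of $N^*$, the elementarity $M^* \preceq N^*$, and the pseudo-closure calculus of $M^*$-subsets; should a given configuration resist a direct application of Lemma~\ref{bigHenkin} (for instance when the desired pseudo-algebraicity of $\phi$ cannot be arranged purely by modifying $\bbar\mr{>t}$), I precede the Henkin step by a preparatory simple extension using the interconnections of Data~\ref{DataA} to move the parameters into a position where the desired case of Lemma~\ref{bigHenkin} becomes available.
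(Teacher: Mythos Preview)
Your basic plan --- induct on $|C|$, set $\psi(y,\xbar_p\mr{\le t}) = \tp(c^*,\,g_{1,p}(t)[\xbar_p\mr{\le t}],\,N^*)$, extend to a complete $\phi(y,\xbar_p)$ including $p$, and invoke the Moreover clause of Lemma~\ref{bigHenkin} with $c_t = c^*$ --- is exactly the paper's argument. The paper's proof occupies three lines and performs no case analysis on which branch of Lemma~\ref{bigHenkin} fires; it simply records that the resulting simple extension $q$ has $g_{1,q}(t)(x_{s,m})=c$.

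Your added case analysis, however, contains a genuine error. You attempt to force the new level $s$ to equal $t$ by ``tuning $\bbar\mr{>t}$'' so as to land in Case~2b or~2c of Lemma~\ref{bigHenkin} with $w^* = t$. This cannot work: $w^*$ is by definition the least level at which $\phi_{w^*}$ is pseudo-algebraic, and $\phi_t = \psi$ is entirely determined by $c^*$ and $B = g_{1,p}(t)[\xbar_p\mr{\le t}]$; altering $\bbar\mr{>t}$ changes $\phi_w$ only for $w > t$. If $c^* \notin \pcl(B, N^*)$ --- and nothing in the hypotheses excludes this --- then $\phi_t$ is not pseudo-algebraic and $w^* > t$ no matter how $\bbar\mr{>t}$ is chosen, so Cases~2b and~2c with $w^*=t$ are simply unavailable. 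The same objection applies to your case~(ii), where ``suitably arranging pseudo-algebraicity'' at level $t$ is again beyond the reach of $\bbar\mr{>t}$. Your closing appeal to unspecified ``preparatory simple extensions using the interconnections of Data~\ref{DataA}'' is a gesture, not an argument; you have not said what those extensions are or why they help.
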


Proof.  Arguing by induction on $|C|$, it suffices to prove this for $C=\{c\}$ a singleton.  Fix $p\in\QQ_I$, $J\supseteq u_p$ and $t\in u_p\cap P$.
As $t\in u_p$, $p$ is non-trivial.  Let $B=range[g_{1,p}(t)]$ and, letting $\zbar=\xbar_p\mr{\le t}$, let $\psi(y,\zbar)=\tp(cB,N^*)$.  Extend $\psi$ to
a complete formula $\phi(y,\xbar_p)$ that includes $p(\xbar_p)$.  Apply Lemma~\ref{bigHenkin} to $p$ and $\phi$, using the `Moreover clause' to require
that $g_{1,q}(x_{s,m})=c$.
$\qed_{\ref{addN*}}$

\begin{Lemma}  \label{combo}  If $p$ is s-topped and $\phi(\ybar,\xbar_p)$ is any complete formula that includes $p$, then there is a simple extension $q$ of $p$ such that
$q(\xbar_q)=\phi$.
\end{Lemma}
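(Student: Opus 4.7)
The plan is to prove Lemma~\ref{combo} by induction on $k=\lg(\ybar)$, feeding one new variable at a time into Lemma~\ref{bigHenkin}. The base case $k=0$ is trivial with $q=p$, so fix $k\ge 1$ and assume the result for $k-1$. Write $\ybar=(y_1,\ybar')$, and let $\phi_1(y_1,\xbar_p)$ be the restriction of $\phi(\ybar,\xbar_p)$ to the displayed variables; this is a complete formula including $p$. Since $p$ is s-topped, hence non-trivial, I can apply Lemma~\ref{bigHenkin} to $p$ and $\phi_1$ to obtain a simple extension $p_1\ge_{\QQ} p$ with $\xbar_{p_1}=\{x_{s_1,m_1}\}\cup\xbar_p$ and $p_1(\xbar_{p_1})=\phi_1$. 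Identifying $y_1$ with $x_{s_1,m_1}$, the formula $\phi$ becomes a complete formula $\phi'(\ybar',\xbar_{p_1})$ whose restriction to $\xbar_{p_1}$ is precisely $\phi_1=p_1(\xbar_{p_1})$, so $\phi'$ includes $p_1$.

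Assuming I can show $p_1$ is again s-topped, the inductive hypothesis applied to $p_1$ and $\phi'$ yields a simple extension $q\ge_{\QQ} p_1$ with $q(\xbar_q)=\phi'$. The simple-extension relation is transitive (if every $s\in u_q$ is $E$-equivalent to some element of $u_{p_1}$, and every element of $u_{p_1}$ is $E$-equivalent to some element of $u_p$, then by transitivity of $E$, every $s\in u_q$ is $E$-equivalent to some element of $u_p$), so $q$ is a simple extension of $p$, and reinterpreting variables gives $q(\xbar_q)=\phi$, as required.

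The key verification is that $p_1$ is s-topped, which I would do by inspecting the five cases in the proof of Lemma~\ref{bigHenkin}. Case~1b is vacuous because $\neg P(\max(u_p))$. In Case~1a one has $s_1\in (\max(u_p))/E$, so $\max(u_{p_1})=s_1$ and $\neg P(s_1)$ by convexity of $E$-classes with respect to $P$. In Cases~2a, 2b, 2c the newly-placed element sits at or below $w^*\in u_p$, and since every $P$-point forms a singleton $E$-class the index $w^*$ is strictly below $\max(u_p)$, so $\max(u_{p_1})=\max(u_p)$ retains $\neg P$. Hence $p_1$ is s-topped in every applicable case, and the induction proceeds.

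The main obstacle is this preservation of the s-topped property through the inductive step; once that is in hand, all remaining bookkeeping (the Striation constraints, constraint groups~3 and 5, and the elementarity in group~4) follows automatically from the fact that each $p_i$ produced along the way already lies in $\QQ_I$ by the output of Lemma~\ref{bigHenkin}, and the overall type of $q$ equals $\phi$ by the choice of formulas fed in at each stage.
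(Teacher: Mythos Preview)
Your proof is correct and follows the same inductive approach as the paper, which simply writes ``Arguing by induction on $\lg(\ybar)$, we may assume $\ybar$ is a singleton.  But then, as $p$ s-topped implies $p$ non-trivial, the result follows immediately from Lemma~\ref{bigHenkin}.''  You are more careful than the paper in explicitly verifying that the intermediate condition $p_1$ remains s-topped, which is indeed needed for the induction to go through.

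One small imprecision: in your treatment of Cases~2a--2c you assert that ``$w^*$ is strictly below $\max(u_p)$,'' but in Case~2a we have $\neg P(w^*)$, so $w^*$ could equal $\max(u_p)$.  This does not matter, since in Case~2a the new variable is placed at level $s=w^*\in u_p$, whence $u_{p_1}=u_p$ and $\max(u_{p_1})=\max(u_p)$ regardless.  The conclusion that $p_1$ is s-topped stands in every applicable case.
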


Proof.  Arguing by induction on $\lg(\ybar)$, we may assume $\ybar$ is a singleton.  But then, as $p$ s-topped implies $p$ non-trivial, the result follows immediately from
Lemma~\ref{bigHenkin}.
$\qed_{\ref{combo}}$

\medskip

Next, we have a series of Lemmas aimed at  proving Proposition~\ref{compatible}, which gives a sufficient condition for two
conditions to have a common extension.  For the proof,  we distinguish two cases.  The condition is on the sets $u_p$, $u_q$, and $J$, but
they collectively describe when we  need to increase the sequence described by $g_{0,p}(t^*)$.

\begin{Definition}  {\em  Suppose $u,v$ are finite subsets of $I$ and $J\subseteq I$ is endless.
We say {\em $v$ obstructs $u$ at $J$} if
\begin{enumerate}
\item  $t^*=\min(u\setminus J)$ exists and $P(t^*)$ holds;
\item  $v\subseteq J$ is non-empty;
\item  Taking $s^*=\max(v)$, we have $\neg P(s^*)$, but $s^*/E> s/E$ for every $s\in u\cap J$.
\end{enumerate}
}
\end{Definition}

\begin{Lemma} \label{ex}
Suppose $p,q\in\QQ_I$, $J$ is endless, $u_q\subseteq J$, and $w=\max(u_p\cap J)$.  If $E(w,\max(u_q))$, then $u_q$ does not obstruct
$u_p$ at $J$.  In particular, if $q$ is a simple extension of $p|J$, then $u_q$ does not obstruct $u_p$ at $J$.
\end{Lemma}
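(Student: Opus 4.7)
The plan is to prove the first claim by direct contradiction with clause (3) of the definition of obstruction, and then to deduce the ``in particular'' clause by combining Definition~\ref{simple} with the convexity of the $E$-classes.

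Before starting, note that $w = \max(u_p \cap J)$ is well-defined: since $J$ is endless it is nonempty, hence (as an initial segment) contains $\min(I)$, and $\min(I)\in u_p$ by Striation constraint~2(a), so $u_p\cap J\neq\emptyset$; finiteness of $u_p$ then gives $w$. Similarly $\max(u_q)$ exists since $\min(I)\in u_q$.

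For the first assertion, suppose toward a contradiction that $E(w,\max(u_q))$ and yet $u_q$ obstructs $u_p$ at $J$. Set $s^*:=\max(u_q)$. Obstruction clause~(3) then says $s^*/E > s/E$ for every $s\in u_p\cap J$; applying this with $s:=w$ yields $s^*/E > w/E$. But the hypothesis $E(w,s^*)$ gives $w/E = s^*/E$, a contradiction.

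For the ``in particular'' clause, assume $q$ is a simple extension of $p\mr J$. By Lemma~\ref{truncate}, $u_{p\mr J}=u_p\cap J$, and since $q\ge_\QQ p\mr J$ we have $u_q\supseteq u_p\cap J$; in particular $w\in u_q$, so $w\le \max(u_q)$. By Definition~\ref{simple}, every element of $u_q$ is $E$-equivalent to an element of $u_{p\mr J}=u_p\cap J$, so in particular $\max(u_q)$ shares an $E$-class with some $s'\in u_p\cap J$, where $s'\le w$ by maximality of $w$. Convexity of $E$-classes applied to $s'\le w\le \max(u_q)$ with $E(s',\max(u_q))$ yields $E(w,\max(u_q))$, and the first part of the lemma applies. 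The main work is this last bookkeeping step: the real content of ``simple extension'' here is that the topmost $E$-class appearing in $u_q$ must coincide with $w/E$, which is exactly what convexity delivers.
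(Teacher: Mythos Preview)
Your proof is correct. The paper states this lemma without proof, evidently regarding it as immediate from the definitions; you have supplied exactly the routine verification that was omitted, unpacking clause~(3) of the definition of obstruction for the first part and then using convexity of $E$-classes together with the definition of simple extension for the second. One very small quibble: your claim that ``since $J$ is endless it is nonempty'' is not quite right on its own (the empty initial segment is vacuously endless), but the hypothesis $u_q\subseteq J$ together with $\min(I)\in u_q$ already forces $J\neq\emptyset$, so the conclusion stands.
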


Lemmas~\ref{easypart}  and \ref{easytwo} prove the `easier half'  of Proposition~\ref{compatible} as we do not need to extend $g_0$.

\begin{Lemma}  \label{easypart} Suppose $p,q\in\QQ_I$, $J\subseteq I$ is endless, $u_q\subseteq J$, $u_p\setminus J=\{w^*\}$ is a singleton
 $p\mr J\le q$, and
$u_q$ does not obstruct $u_p$ at $J$.  Then there is $r\in\QQ_I$ with $\xbar_r=\xbar_p\cup\xbar_q$, $r\ge p$, and $r\ge q$.
Moreover, if $q$ is a simple extension of $p\mr{J}$, then $r$ is a simple extension of $p$.
\end{Lemma}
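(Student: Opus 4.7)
The plan is to construct $r$ by gluing the lower portion contributed by $q$ onto $p$'s singleton contribution at $w^*$.  Since $u_q\subseteq J$, $u_p\setminus J=\{w^*\}$, and $J$ is an initial segment, every element of $u_q$ lies below $w^*$, and $u_r:=u_p\cup u_q=u_q\cup\{w^*\}$ has $w^*$ as its unique element outside $J$.  I would set $\xbar_r:=\xbar_p\cup\xbar_q$; the variables at levels in $u_p\cap J$ are shared between $p$ and $q$ and cause no conflict because $p\mr J\le q$ means the two types already agree on $\xbar_p\mr J$.

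To build the type $r(\xbar_r)$, write $B:=g_{1,p}(w^*)[\xbar_p\mr{<w^*}]\subseteq M^*$.  Since $p\mr J\le q$, the tuple $B$ realizes $q\mr{\xbar_p\mr J}$ in $N^*$.  Exploiting that $M^*$ is a countable atomic model and $q(\xbar_q)$ is principal, I would extend $B$ to a tuple $\ebar\subseteq M^*$ realizing $q(\xbar_q)$.  Together with $g_{1,p}(w^*)[\xbar_p\mr{=w^*}]$ (which contains $a^*\in N^*\setminus M^*$), $\ebar$ yields a combined tuple $\abar_r\subseteq N^*$; define $r(\xbar_r):=\tp(\abar_r,N^*)$, which is principal by atomicity of $N^*$.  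By construction $r$ restricts to $p$ on $\xbar_p$ and to $q$ on $\xbar_q$.  For the auxiliary data, for each $t\in u_q\cap P$ I would take $g_{0,r}(t):=g_{0,q}(t)$ and $g_{1,r}(t):=g_{1,q}(t)$; these are coherent because $\xbar_r\mr{\le t}=\xbar_q\mr{\le t}$ for all such $t$.  If $P(w^*)$ holds, I would set $g_{0,r}(w^*):=g_{0,p}(w^*)$ and let $g_{1,r}(w^*)$ extend $g_{1,p}(w^*)$ by using $\ebar$'s values on $\xbar_q\setminus\xbar_p$.

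The Striation Constraints for $r$ follow routinely from those for $p$ and $q$: at $w^*$, the key point $x_{w^*,0}\notin\pcl(\xbar_r\mr{<w^*})$ is forced because $g_{1,r}(w^*)$ sends $\xbar_r\mr{<w^*}$ into $M^*$ while $a^*\notin\pcl(M^*,N^*)$ (any set containing $\ebar\subseteq M^*$ has its pseudo-closure inside $M^*$, which omits $a^*$).  The $g_1$-Constraints are also clear: $g_{1,r}(w^*)$ is elementary by the very definition of $r$, $g_{1,r}(w^*)(x_{w^*,0})=a^*$, and the images split correctly between $N^*\setminus M^*$ at level $w^*$ and $M^*$ below.

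The main obstacle is verifying the $g_0$- and Interconnection-Constraints at $w^*$ when $P(w^*)$ holds.  For $g_{0,r}(w^*)=g_{0,p}(w^*)$ to remain valid under the enlargement $u_p\rightsquigarrow u_r$, the topmost $E$-class in $u_r$ below $w^*$ supporting a $\neg P$-representative must still coincide with $s_{\ell_{p,w^*}-1}/E$.  The failure mode is exactly that $u_q$ has inserted a new topmost $\neg P$ $E$-class strictly above all $E$-classes present in $u_p\cap J$, which would force a proper end-extension of $g_{0,p}(w^*)$ and, via clause 5, the provision of a $c_m$-witness not controlled by our construction.  This is precisely the obstruction configuration ruled out by hypothesis, so $g_{0,p}(w^*)$ still terminates at the topmost $\neg P$ $E$-class of $u_r$ below $w^*$, and clause 5 for $r$ reduces verbatim to clause 5 for $p$.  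The moreover clause is then immediate from Lemma~\ref{simplelemma}: if $q$ is simple over $p\mr J$, then $u_q$ introduces no new $E$-classes relative to $u_p\cap J$, so $u_r$ introduces none relative to $u_p$, and $r$ is a simple extension of $p$.
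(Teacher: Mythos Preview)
Your argument handles the case $P(w^*)$ correctly and in essentially the same way as the paper: you realize $q$ inside $M^*$ extending $g_{1,p}(w^*)[\xbar_p\mr{<w^*}]$, adjoin $g_{1,p}(w^*)[\xbar_p\mr{=w^*}]\subseteq N^*\setminus M^*$, and use non-obstruction to keep $g_{0,r}(w^*)=g_{0,p}(w^*)$ valid for clause~3(c).

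There is, however, a genuine gap.  Your construction of the type $r(\xbar_r)$ begins with ``write $B:=g_{1,p}(w^*)[\xbar_p\mr{<w^*}]$,'' but by the definition of $\QQ_I$ the function $g_{1,p}$ has domain $u_p\cap P$, so $g_{1,p}(w^*)$ simply does not exist when $\neg P(w^*)$.  Your verification of Striation~2(c) at $w^*$ (``$a^*\notin\pcl(M^*,N^*)$'') likewise presupposes $g_{1,p}(w^*)(x_{w^*,0})=a^*$, which is clause~4(b) and only applies for $w^*\in P$.  So the entire construction of $r$ is undefined in the $\neg P(w^*)$ case.

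That case requires a different argument, since there is no canonical embedding into $(N^*,M^*)$ attached to level $w^*$.  One must use the striation constraints directly: writing $\xbar_p$ as $y,\ybar,\zbar$ with $y=x_{w^*,0}$, $\ybar=\xbar_p\mr{=w^*}\setminus\{y\}$, $\zbar=\xbar_p\mr{<w^*}$, constraint~2(c) gives that $\phi(y,\zbar):=\exists\ybar\,p(y,\ybar,\zbar)$ is non-pseudo-algebraic, while~2(d) gives $\ybar\subseteq\pcl(y\zbar)$.  Working now in an arbitrary atomic $M$, realize $q(\zbar,\wbar)$ by some $AB$, pick $c\models\phi(y,A)$ with $c\notin\pcl(AB,M)$ (this is exactly what secures~2(c) for $r$ over the enlarged base), fill in $\dbar$ so that $c\dbar A$ realizes $p$, and set $r:=\tp(c\dbar AB,M)$.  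Here non-obstruction is vacuous (its clause~1 fails), and one simply takes $\gbar_r=\gbar_q$ with nothing to define at level $w^*$.
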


Proof.  We split into two cases, depending on whether or not $P(w^*)$ holds.
 In both cases, as the $r$ we construct will satisfy $\xbar_r=\xbar_p\cup\xbar_q$,
the primary objective is to find an appropriate complete type $r(\xbar_r)$ extending $p(\xbar_p)\cup q(\xbar_q)$.

\medskip\par\noindent{\bf Case 1.}  $\neg P(w^*)$ holds.  [Put $s^*=w^*$ to indicate this.]

\medskip\par

Write the variables of $\xbar_p$ as $y,\ybar,\zbar$, where $y=x_{s^*,0}$, $\ybar=\<x_{s^*,j}:1\le j<n_{s^*}\>$ and $\zbar=\xbar_p\mr{<s^*}$.
As $p(y,\ybar,\zbar)$ is a complete type, so is $\phi(y,\zbar):=\exists\ybar p(y,\ybar,\zbar)$.  By the Striation Constraints, $\phi(y,\zbar)$ is not pseudo-algebraic, but
$p(\ybar;y\zbar)$ is pseudo-algebraic.  Next, write the variables $\xbar_q$ as $\zbar,\wbar$ (this uses $p\mr {<s^*}=p\mr J$ and $p\mr J\le q$).

Choose any $M\in\At$ and choose $AB\subseteq M$ such that $\tp(AB,M)=q(\zbar,\wbar)$.  As $\phi(y,A)$ is not pseudo-algebraic, we can find $c\in M$
such that $M\models \phi(c,A)$, but $c\not\in\pcl(AB,M)$.  Then choose $\dbar$ from $M$ so that $\tp(c\dbar A,M)=p(y,\ybar,\zbar)$.  Necessarily,
$\dbar\subseteq\pcl(cAB,M)$.

Now write $\xbar_r$ as $y,\ybar,\zbar,\wbar$ and let $r(\xbar_r)=\tp(c\dbar AB,M)$.  It is easily checked that the Striation Constraints are maintained.
As for $\gbar_r$, put $\gbar_r=\gbar_q$, i.e., for every $t\in u_q\cap P$, $g_{0,r}(t)=g_{0,q}(t)$ and $g_{1,r}(t)=g_{1,q}(t)$.  As $s^*>s$ for every $s\in u_q$,
the functions $\gbar_r$ are as required, simply because they were for $q$.

\medskip\par\noindent{\bf Case 2.}  $P(w^*)$ holds.  [Put $t^*=w^*$ to indicate this.]
\medskip

Here, write $\xbar_p$ as $\ybar,\zbar$, where $\ybar$ is $\xbar_p\mr{=t^*}$ and $\zbar$ is $\xbar_p\mr{<t^*}$.  As $p\mr J=p\mr{<t^*}$ and $p|J\le q$,
we can write $\xbar_q$ as $\zbar,\wbar$ and we have $q(\zbar,\wbar)\vdash p(\zbar)$.
In this case, we use the function $g_{1,p}(t^*):\ybar\zbar\rightarrow N^*$ as our guide.
We know that $B=g_{1,p}(t^*)[\ybar]\subseteq N^*\setminus M^*$, $A=g_{1,p}(t^*)[\zbar]\subseteq M^*$, and $\tp(BA,N^*)=p(\ybar,\zbar)$.
As $q(\zbar,\wbar)\vdash p(\zbar)$, choose $C\subseteq M^*$ such that $\tp(AC,M^*)=q(\zbar,\wbar)$.

Now write $\xbar_r$ as $\ybar\zbar\wbar$ and let $r(\xbar_r)=\tp(BAC,N^*)$.  It is evident that the Striation Conditions are satisfied.
As for $\gbar_r$, we can put $\gbar_r(t)=\gbar_q(t)$ for every $t\in u_q\cap P$.  So, it only remains to define $g_{0,r}(t^*)$ and $g_{1,r}(t^*)$.
The latter is easy, as we used $N^*$ as our template.  That is, define $g_{1,r}(t^*)$ to be the function mapping $\ybar\zbar\wbar$ to $BAC$.

Finally, the definition of $g_{0,r}(t^*)$ is where we use our assumption that $u_q$ does not obstruct $u_p$ at $J$.  We are assuming that
$t^*=\min(u_p\setminus J)$ and $P(t^*)$ holds.  Choose $s\in u_p$ to be from the largest $E$-class in $u_p$ below $t^*$.  As $p\in\QQ_I$,
$\neg P(s)$ holds.  Let $s^*=\max(u_q)$.    As $u_q$ does not obstruct $u_p$ at $J$, $s^*/E\le s/E$.  Thus, there is no reason to extend $g_{0,p}(t^*)$,
and we simply let $g_{0,r}(t^*)=g_{0,p}(t^*)$.
$\qed_{\ref{easypart}}$

\begin{Lemma}  \label{easytwo}
 Suppose $p,q\in\QQ_I$, $J\subseteq I$ is endless, $u_q\subseteq J$,
 $p\mr J\le q$, and
$u_q$ does not obstruct $u_p$ at $J$.  Then there is $r\in\QQ_I$ with $\xbar_r=\xbar_p\cup\xbar_q$, $r\ge p$, and $r\ge q$.
Moreover, if $q$ is a simple extension of $p\mr{J}$, then $r$ is a simple extension of $p$.
\end{Lemma}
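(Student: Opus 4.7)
The plan is to induct on $n := |u_p \setminus J|$, using Lemma~\ref{easypart} to peel off the bottom element of $u_p \setminus J$ one step at a time. The base case $n \le 1$ is direct: when $n = 0$ we have $u_p \subseteq J$, so $p = p\mr J \le q$ and $r := q$ works (with $\xbar_r = \xbar_q = \xbar_p \cup \xbar_q$ since $\xbar_p \subseteq \xbar_q$); when $n = 1$, Lemma~\ref{easypart} applies verbatim.

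For the inductive step with $n \ge 2$, enumerate $u_p \setminus J$ in increasing order as $w_1 < w_2 < \cdots < w_n$ and set $J' := \{t \in I : t < w_2\}$. By density of $(I,<)$, $J'$ is an endless proper initial segment; since $J$ is an initial segment avoiding $w_1 < w_2$, we have $J \subseteq \{t : t < w_1\} \subseteq J'$. Write $p' := p\mr{J'}$; then $u_{p'} \setminus J = \{w_1\}$, while $u_{p'} \cap J = u_p \cap J$ and $p'\mr J = p\mr J \le q$. Because $\min(u_{p'} \setminus J) = w_1 = \min(u_p \setminus J)$, the obstruction condition for $u_q$ against $u_{p'}$ at $J$ is literally the same predicate as against $u_p$ at $J$, hence fails by hypothesis. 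Lemma~\ref{easypart} then produces $r_1 \in \QQ_I$ with $\xbar_{r_1} = \xbar_{p'} \cup \xbar_q$, $r_1 \ge p'$, and $r_1 \ge q$. I would then apply the induction hypothesis to the pair $(p, r_1)$ with endless segment $J'$: here $|u_p \setminus J'| = n - 1$, $u_{r_1} = u_{p'} \cup u_q \subseteq J'$, and $p\mr{J'} = p' \le r_1$. The resulting $r$ satisfies $\xbar_r = \xbar_p \cup \xbar_{r_1} = \xbar_p \cup \xbar_q$ together with $r \ge p$ and $r \ge r_1 \ge q$.

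The one point that has to be checked for the inductive call is that $u_{r_1}$ does not obstruct $u_p$ at $J'$, and I expect this to be the only nontrivial verification in the argument. It works out automatically: every element of $u_q \subseteq J$ lies strictly below $w_1$, while $\max(u_{p'}) = w_1$, so $\max(u_{r_1}) = w_1$. But $w_1 \in u_p \cap J'$, and the requirement $\max(u_{r_1})/E > s/E$ for every $s \in u_p \cap J'$ already fails at $s = w_1$, where $w_1/E = w_1/E$. Hence no obstruction arises and the induction goes through.

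For the moreover clause, simplicity is preserved step by step: if $q$ is a simple extension of $p\mr J$, then the moreover part of Lemma~\ref{easypart} makes $r_1$ a simple extension of $p\mr{J'} = p'$; the inductive moreover hypothesis, applied to $(p, r_1)$ at $J'$, then makes $r$ a simple extension of $p$, completing the proof.
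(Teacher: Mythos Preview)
Your proof is correct and follows exactly the approach the paper indicates: induction on $|u_p\setminus J|$, invoking Lemma~\ref{easypart} at each step. The paper's proof is a single sentence (``Arguing by induction on $|u_p\setminus J|$, this follows immediately from Lemma~\ref{easypart}''), and you have carefully supplied the details it omits, including the key verification that $u_{r_1}$ does not obstruct $u_p$ at $J'$ (which, as you note, comes down to $\max(u_{r_1})=w_1\in u_p\cap J'$, so condition~(3) of the obstruction definition fails trivially).
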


Proof.  Arguing by induction on $|u_p\setminus J|$, this follows immediately from Lemma~\ref{easypart}.
$\qed_{\ref{easytwo}}$

We now consider the `harder half' where we do need to extend $g_0$.

\begin{Lemma}  \label{hardpart}  Suppose $p,q\in \QQ_I$, $t^*=\max(u_p)$, $P(t^*)$ holds, $u_q\subseteq I\mr{<t^*}$, and $p\mr{<t^*}\le q$.
Then there is $r\in\QQ_I$, $r\ge p$, $r\ge q$, and $\max(u_r)=t^*$.
\end{Lemma}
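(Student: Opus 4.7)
Plan. The plan is to construct $r$ by extending $g_{1,p}(t^*)\colon\xbar_p\mr{\le t^*}\to N^*$ to a larger elementary map whose domain covers $\xbar_p\cup\xbar_q$ (plus, if needed, fresh variables resolving the obstruction), reading $r(\xbar_r)$ off this map, and extending $g_{0,p}(t^*)$ by a single new top entry $s^\top$ placed in the topmost $E$-class of $u_r$ below $t^*$. The new interconnection datum will be a freshly chosen pair $(\dbar_{m^*},c_{m^*})$ from Data~\ref{DataA} with $m^*$ large.

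Type merging proceeds along the lines of Lemma~\ref{easypart}, Case~2. Decompose $\xbar_p=(\ybar,\zbar)$ with $\ybar=\xbar_p\mr{=t^*}$ and $\zbar=\xbar_p\mr{<t^*}$; using $p\mr{<t^*}\le q$, write $\xbar_q=(\zbar,\wbar)$. Let $A=g_{1,p}(t^*)[\zbar]\subseteq M^*$ and $B=g_{1,p}(t^*)[\ybar]\subseteq N^*\setminus M^*$. By atomicity of $M^*\preceq N^*$, pick $C\subseteq M^*$ with $\tp(AC,M^*)=q(\zbar,\wbar)$. Next, examine the topmost $E$-class of $u_p\cup u_q$ below $t^*$: if it admits a $\neg P$ representative from $u_p\cup u_q$, take $s^\top$ to be such; otherwise it is a $P$-singleton contributed by $u_q$, so pick a fresh $s^\top\in I$ with $\neg P(s^\top)$ in a new $E$-class strictly above all existing $E$-classes of $u_p\cup u_q$ below $t^*$ and below $t^*/E$ (available by density of $\neg P$-classes in $I/E$), and adjoin $x_{s^\top,0}$ to $\xbar_r$.

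The extended $g_{0,r}(t^*)$ introduces one new pair $(s_{\ell-1},s^\top)$, for which Constraint~5 requires some $m^*$ with $\dbar_{m^*}\subseteq g_{1,r}(t^*)[\xbar_r\mr{\le s_{\ell-1}/E}]$ and $g_{1,r}(t^*)(x_{s^\top,0})=c_{m^*}$. Choose $m^*$ large enough that $A\cup C\cup\dbar_{m^*}\subseteq M_{m^*}$ and $m^*>m_{\ell-2}$. To seat $\dbar_{m^*}$ inside the image at a level $\le s_{\ell-1}/E$, apply Lemma~\ref{addN*} to $p$ at $t^*$ with target $\dbar_{m^*}$, obtaining a simple extension $p^+\ge p$ with $\dbar_{m^*}\subseteq\mathrm{range}[g_{1,p^+}(t^*)]$; being simple, $p^+$ introduces no new $E$-classes, so the topmost $E$-class analysis above is unaffected. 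Re-run the merging of the previous paragraph with $p^+$ in place of $p$ (compatibility holds since both $p^+\mr{<t^*}$ and $q$ extend $p\mr{<t^*}$), producing a merged tuple in $N^*$ from which we read $r(\xbar_r)$, and set $g_{1,r}(t^*)(x_{s^\top,0})=c_{m^*}$. Striation~2(c) at $s^\top$ holds because $\pcl(g_{1,r}(t^*)[\xbar_r\mr{<s^\top}],N^*)\subseteq M_{m^*}$ (pcl is the intersection of models containing the set) while $c_{m^*}\in M_{m^*+1}\setminus M_{m^*}$.

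The main obstacle is controlling the placement of $\dbar_{m^*}$ under Lemma~\ref{addN*}: the bookkeeping in Lemma~\ref{bigHenkin}(3) inserts each new variable at the least $E$-class over which its value's type is pseudo-algebraic, so we must ensure all components of $\dbar_{m^*}$ land at $E$-classes $\le s_{\ell-1}/E$ rather than being forced into fresh $E$-classes between $s_{\ell-1}/E$ and $t^*/E$, which would reopen the obstruction. This is arranged by exploiting the cumulative structure of the filtration $\<M_n\>$: since $\dbar_{m^*}\in M_{m^*}$ and pseudo-closures are computed against models $\preceq M^*$, by taking $m^*$ large and, if necessary, iterating Lemma~\ref{addN*} with intermediate targets drawn from the pcl-closures of earlier image pieces, the relevant pseudo-algebraicity becomes available over enriched low-level images. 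Once this placement is secured, the remaining verifications --- Striation~2(a)--(d), Constraint groups~3 and~4, and Constraint~5 for both inherited and new pairs --- are routine against the extended elementary map into $N^*$, with $\max(u_r)=t^*$ and $r\ge p$, $r\ge q$ read directly from the construction.
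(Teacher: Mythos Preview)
There is a genuine gap in your handling of the case where the topmost $E$-class of $u_p\cup u_q$ below $t^*$ already admits a $\neg P$-representative from $u_p\cup u_q$. You take $s^\top$ to be this existing element and then declare $g_{1,r}(t^*)(x_{s^\top,0})=c_{m^*}$. But $x_{s^\top,0}$ is already a variable of $\xbar_q$ (or of $\xbar_{p^+}$), and its image under $g_{1,r}(t^*)$ is pinned down by your choice of the tuple $C$ realizing $q(\zbar,\wbar)$ in $M^*$; you are not free to reassign it to $c_{m^*}$. There is no reason the element of $C$ sitting at coordinate $x_{s^\top,0}$ should equal any prescribed $c_{m^*}$, and no amount of enlarging $m^*$ will force this. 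The paper sidesteps the problem by \emph{always} introducing a fresh $s^*$ in a brand-new $\neg P$ $E$-class strictly above $\max(u_q)/E$ and below $t^*$, so that $x_{s^*,0}$ is a genuinely new variable whose $g_1(t^*)$-value may be set to $c_m$ at will.

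A second, structural difference: the paper extends $q$ (not $p$) via Lemma~\ref{combo} to absorb $\dbar_m$. This preserves the hypothesis $p\mr{<t^*}\le q'$ needed to merge directly along the lines of Lemma~\ref{easypart}, Case~2. Your route --- extending $p$ to $p^+$ via Lemma~\ref{addN*} --- breaks this: $p^+\mr{<t^*}$ and $q$ now both properly extend $p\mr{<t^*}$ without either extending the other, so ``re-running the previous paragraph'' does not literally apply; you must separately amalgamate them and then redefine $g_0,g_1$ at every $t'\in u_q\cap P$ over the enlarged variable set, none of which you address. Incidentally, your ``main obstacle'' paragraph is fighting a phantom: since $\dbar_{m^*}\subseteq M^*$, the case analysis in Lemma~\ref{bigHenkin} (Case~1b second subcase, and Cases~2a, 2c) already forces each new variable to land below $t^*$, hence into an $E$-class $\le s_{\ell-1}/E$, with no need for the iterative enrichment you sketch.
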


Proof.  Let $w^*=\max(u_q)$.  If $w^*/E\le s/E$ for some $s\in u_p$, $s<t^*$, then $u_q$ would not obstruct $u_p$ at $I\mr{<t^*}$ and we would be done by Lemma~\ref{easypart}.
So assume that $w^*/E>s/E$ for every $s\in u_p$, $s<t^*$.  First, by Lemma~\ref{s-toppedL}, we may assume that $q$ is s-topped.
Arguing as in Case 2 of Lemma~\ref{easypart} write $\xbar_p$ as $\ybar\zbar$, where $\ybar$ is $\xbar_p\mr{=t^*}$ and $\zbar$ is $\xbar_p\mr{<t^*}$.
As well, write $\xbar_q$ as $\zbar,\wbar$ where $q(\zbar,\wbar)\vdash p(\zbar)$.
Again, $B=g_{1,p}(t^*)[\ybar]\subseteq N^*\setminus M^*$ and $A=g_{1,p}(t^*)[\zbar]\subseteq M^*$, where $\tp(BA,N^*)=p(\ybar,\zbar)$.
As $q(\zbar,\wbar)\vdash p(\zbar)$, choose $C\subseteq M^*$ such that $\tp(AC,M^*)=q(\zbar,\wbar)$.
As $AC$ is finite, choose $m\in\omega$ such that $AC\subseteq M_m$.  Now consider $\tp(\dbar_m/AC,M_m)$.  By applying Lemma~\ref{combo}, there is a simple extension
$q'$ of $q$
such that $\tp(\dbar_mAC,M_m)=q'(\xbar_{q'})$.  As $q'$ is a simple extension of $q$,  $\max(u_{q'})<t^*$.  Thus, by replacing $q'$ by $q$, we may additionally assume that
$\dbar_m\subseteq AC$.
Finally, choose $s^*\in I$ satisfying $w^*/E<s^*/E<t^*$ and $\neg P(s^*)$.

We are now able to define $r$.  Put $\xbar_r=\{x_{s^*,0}\}\cup\ybar\zbar\wbar$ and let $r(\xbar_r)=\tp(c_mBAC,N^*)$.  
For $t^*$, first let $g_{1,r}(t^*)$ map $\xbar_r$ onto $c_mBAC$.  As we assumed $\dbar_m\subseteq AC$, $\dbar_m\subseteq range[g_{1,r}(t^*)]$.  Finally,
let $\ell_{t^*,r}=\ell+1$, where $\ell=\ell_{t^*,p}$ and let $g_{0,r}(t^*)$ be the one-element end extension of $g_{0,p}(t^*)$ formed by $g_{0,r}(t^*)(\ell)=s^*$.
It is easily verified that $r\in\QQ_I$ is as desired.
$\qed_{\ref{hardpart}}$

\begin{Proposition}  \label{compatible} Suppose $p,q\in\QQ_I$, $J\subseteq I$ is endless, $u_q\subseteq J$, and
 $p\mr J\le q$.
Then there is $r\in\QQ_I$ with $\max(u_r)=\max(u_p)$, $r\ge p$, and $r\ge q$.
\end{Proposition}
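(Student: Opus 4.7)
The plan is to induct on $n := |u_p \setminus J|$, reducing each step to Lemma~\ref{easytwo} (when $u_q$ does not obstruct) or Lemma~\ref{hardpart} (when it does). The base case $n=0$ is immediate (take $r=q$), so assume $n\ge 1$; then $u_q\subseteq J$ forces every element of $u_q$ to lie strictly below $\max(u_p)$.

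If $u_q$ does not obstruct $u_p$ at $J$, Lemma~\ref{easytwo} directly supplies $r\in\QQ_I$ with $\xbar_r=\xbar_p\cup\xbar_q$ and $r\ge p,q$; then $\max(u_r)=\max(u_p\cup u_q)=\max(u_p)$ since $u_q\subseteq J$. Otherwise $u_q$ obstructs $u_p$ at $J$, so by definition $t^*:=\min(u_p\setminus J)$ exists and $P(t^*)$ holds. Set $p':=p\mr{\le t^*}$. Then $\max(u_{p'})=t^*$, $u_q\subseteq I\mr{<t^*}$, and $p'\mr{<t^*}=p\mr J\le q$, so Lemma~\ref{hardpart} applies to $p'$ and $q$ to produce $r'\in\QQ_I$ with $r'\ge p',q$ and $\max(u_{r'})=t^*$. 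To promote $r'$ to a common extension of $p$ and $q$, use the density of the dense linear order $I/E$ to pick an endless proper initial segment $J'\supseteq I\mr{\le t^*}$ with $u_p\cap J'=u_{p'}$---so that $J'$ contains $t^*$ but misses every element of $u_p$ lying strictly above $t^*$. Then $u_{r'}\subseteq I\mr{\le t^*}\subseteq J'$, $p\mr{J'}=p'\le r'$, and $|u_p\setminus J'|=n-1$. Applying the inductive hypothesis to $(p,r',J')$ yields $r\in\QQ_I$ with $r\ge p$, $r\ge r'$, and $\max(u_r)=\max(u_p)$; since $r'\ge q$, we have $r\ge q$ as well.

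The substantive obstacle is the obstructing case, which is precisely why Lemma~\ref{hardpart} exists: one must genuinely extend the $g_0$-component of the condition, inserting a new $\neg P$-level in an $E$-class strictly above $\max(u_q)/E$ but strictly below $t^*$, and invoke the interconnection data $(\dbar_m,c_m)$ from Data~\ref{DataA} to realize a suitable complete type over $M^*$ inside $N^*$. Once $r'$ has been produced, the remaining work is bookkeeping: verifying that $J'$ can be chosen as above (via the density of $I/E$ and the fact that $u_p$ is finite) and that the hypotheses of the inductive call---$J'$ endless, $u_{r'}\subseteq J'$, $p\mr{J'}\le r'$, and $|u_p\setminus J'|<n$---are all in force.
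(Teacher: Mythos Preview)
Your proof is correct and follows essentially the same strategy as the paper's: peel off the bottom of $u_p\setminus J$ one layer at a time, invoking Lemma~\ref{hardpart} when obstruction occurs (to genuinely extend $g_0$) and the non-obstructing lemma otherwise, then recurse on a larger initial segment $J'$. The paper's one-line proof inducts on the number of $E$-classes in $u_p\setminus J$ and cites Lemma~\ref{easypart} or Lemma~\ref{hardpart} at each step; you instead induct on $|u_p\setminus J|$ and invoke Lemma~\ref{easytwo} to clear the entire non-obstructing case at once, which is a harmless reorganization since Lemma~\ref{easytwo} is itself just iterated Lemma~\ref{easypart}. Your explicit construction of $J'$ and verification that $p\mr{J'}=p'$, $u_{r'}\subseteq J'$, and $|u_p\setminus J'|=n-1$ are exactly the bookkeeping the paper suppresses.

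One small remark: your base case $n=0$ takes $r=q$, but then $\max(u_r)=\max(u_q)$ need not equal $\max(u_p)$ if $q$ properly extends $p$. This is a non-issue in practice (the recursive calls you generate always land in the situation $\max(u_{r'})=\max(u_{p'})$, and the paper's applications of the proposition all have $u_p\setminus J\neq\emptyset$), but strictly speaking the conclusion $\max(u_r)=\max(u_p)$ should be read under the tacit hypothesis $u_p\not\subseteq J$. The paper's terse proof has the same implicit assumption.
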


Proof.  This follows immediately by induction on the number of $E$-classes of elements in $u_p\setminus J$, using either Lemma~\ref{easypart} or Lemma~\ref{hardpart}
at each step.
$\qed_{\ref{compatible}}$

\medskip

From this, we can easily verify that $\QQ_I$ has the c.c.c.

\begin{Lemma}\label{ccc}
$(\QQ_I,\le_\Q)$ has the c.c.c.
\end{Lemma}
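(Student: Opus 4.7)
The plan is to assume $\{p_\alpha : \alpha < \omega_1\}$ is an uncountable antichain in $\QQ_I$ and produce two compatible members, contradicting the antichain assumption. The tools are the $\Delta$-system lemma, two rounds of pigeonhole exploiting that $I$ is $\aleph_1$-like, and a single application of Proposition~\ref{compatible}.

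First I would apply the $\Delta$-system lemma to the finite supports $\{u_{p_\alpha}\}$, passing to an uncountable subfamily whose supports form a $\Delta$-system with root $u^*$ (automatically containing $\min(I)$); write $v_\alpha := u_{p_\alpha}\setminus u^*$, so the $v_\alpha$'s are pairwise disjoint. Since $\mathrm{pred}_I(\max u^*)$ is countable and the sets $v_\alpha \cap \mathrm{pred}_I(\max u^*)$ are pairwise disjoint finite subsets of it, only countably many can be nonempty, so a refinement ensures every $v_\alpha$ lies strictly above $u^*$ in $I$.

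Next I pigeonhole the surviving family so that the $p_\alpha$'s literally agree on $u^*$. The principal type in the variables $\xbar_{u^*}$ takes only countably many values; for each $t \in u^* \cap P$, the sequence $g_{0,p_\alpha}(t)$ is (by $v_\alpha > u^*$) drawn from the finite set $u^* \cap \mathrm{pred}_I(t)$, and $g_{1,p_\alpha}(t)$ is a function from a fixed finite domain into the countable $N^*$ --- countably many choices each. An uncountable refinement makes all these data constant in $\alpha$. A second use of $\aleph_1$-likeness orders two tails: fixing $\alpha$, pairwise disjointness of $\{v_\beta\}_{\beta\ne\alpha}$ inside the countable set $\{s \in I : s \le \max v_\alpha\}$ forces all but countably many $\beta$ to satisfy $v_\alpha < v_\beta$ in $I$. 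Choose such a pair $\alpha < \beta$ and set $J := \{s \in I : s < \min v_\beta\}$. By density of $(I,<)$, $J$ is an endless proper initial segment, $u_{p_\alpha} \subseteq J$, and $u_{p_\beta} \cap J = u^*$; hence $p_\beta \mr J$ has support $u^*$ and, by the agreement arranged above, coincides with $p_\alpha \mr J$, so $p_\beta \mr J \le p_\alpha$. Proposition~\ref{compatible}, applied with $p := p_\beta$ and $q := p_\alpha$, produces $r \in \QQ_I$ with $r \ge p_\alpha$ and $r \ge p_\beta$, the desired contradiction.

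The only delicate bookkeeping is confirming that $p_\beta \mr J$ really is an element of $\QQ_I$, in particular that Clause~3(c) of the definition of the forcing (the final entry of $g_{0}(t)$ lies in the topmost $E$-class of the support below $t$) survives truncation. This is exactly where the arrangement $v_\beta > u^*$ is used: no element of $v_\beta$ lies below any $t \in u^* \cap P$, so the topmost $E$-class of $u_{p_\beta}$ below $t$ coincides with that of $u^*$ below $t$, and the last element of $g_{0,p_\beta}(t)$ --- which already sits in $u^*$ by the same reasoning --- continues to witness Clause~3(c). With this check in place, all the genuine forcing-theoretic work has been absorbed into Proposition~\ref{compatible}, and the c.c.c.\ lemma itself is a $\Delta$-system/pigeonhole epilogue.
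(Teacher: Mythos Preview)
Your argument is correct and follows essentially the same route as the paper: $\Delta$-system on the supports, pigeonhole to freeze all data on the root $u^*$, use $\aleph_1$-likeness to push the non-root parts above $u^*$ and then to linearly order two of them, and finish with a single call to Proposition~\ref{compatible}. One small slip: you write that $p_\beta\mr J$ ``coincides with $p_\alpha\mr J$,'' but since $u_{p_\alpha}\subseteq J$ we have $p_\alpha\mr J=p_\alpha$, which has strictly larger support than $u^*$; what you mean (and what you correctly conclude) is that $p_\beta\mr J$ agrees with the restriction of $p_\alpha$ to $u^*$, whence $p_\beta\mr J\le_{\QQ} p_\alpha$. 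Your extra paragraph checking Clause~3(c) is fine but not strictly needed here---Proposition~\ref{compatible} only asks for $J$ endless and $p_\beta\mr J\le p_\alpha$, not that $p_\beta\mr J$ itself be a condition.
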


Proof.  Let $\{ p_i\mcolon i< \aleph_1\}\subseteq\QQ_I$ be  a collection of
conditions.  We will find $i\neq j$ for which $p_i$ and $p_j$ are compatible.
We successively reduce this set maintaining its uncountability. By
the $\Delta$-system lemma we may assume that there is a single $u^*$
such that for all $i,j$, $u_{p_i} \cap u_{p_j} = u^*$. Further, by the
pigeonhole principle we can assume that for each $t\in u^*$, $n_{p_i,t}
= n_{p_j,t}$.   We can use pigeon-hole again to guarantee that all
the $p_i$ and $p_j$ agree on the finite set of shared variables. Furthermore, by pigeon-hole,
we may assume $\gbar_{p_i}(t)=\gbar_{p_j}(t)$ for all $t\in u^*\cap P$.  And
finally, since $I$ is $\aleph_1$-like we can choose an uncountable
set $X$ of conditions such that for $i<j$ and $p_i,p_j \in X$ all
elements of $u^*$ precede anything in any $u_{p_i}\setminus u^*$ or $u_{p_j}\setminus u^*$
and that all elements of $u_{p_i}\setminus u^*$ are less that all elements of
$u_{p_j} \setminus u^*$.

Finally, choose any $i<j$ from $X$.  Let $J=\{s\in I:s< \min(u_{p_j}\setminus u_{p_i})\}$.
By Proposition~\ref{compatible} applied to $p_i$ and $p_j$ for this choice of $J$, we conclude that $p_i$ and $p_j$ are compatible.
$\qed_{\ref{ccc}}$

\medskip

In the remainder of Section~\ref{proof1} we list the crucial ‘constraints’, which are sets of
conditions, and we prove each of them to be dense and open in $\QQ_I$. While A-C are quite similar to \cite{BLSmanymod}; the later ones depend more on this context. Before stating the first constraint we prove a lemma needed to study it.

\begin{Lemma}  \label{t-topped}  For every $p\in\QQ_I$ and every $t\in I$ such that $P(t)$ holds and $w<t$ for every $w\in u_p$, there is $q\in\QQ_I$, $q\ge p$,
with  $\max(u_q)=t$ and $u_q\cap P=(u_p\cap P)\cup\{t\}$.
\end{Lemma}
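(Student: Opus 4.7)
The plan is to first reduce to the case where $p$ is s-topped (i.e., has a $\neg P$-witness at the top of its support) by applying Lemma~\ref{s-toppedL} with $s':=t$; the hypothesis $u_p\subseteq I\mr{<t}$ is exactly what that lemma needs. This yields an s-topped $q_0\ge p$ with $u_{q_0}\subseteq I\mr{<t}$. Since the proof of Lemma~\ref{s-toppedL} only invokes Lemma~\ref{alternate}, which adds at most one new element at a $\neg P$-level, we have $u_{q_0}\cap P=u_p\cap P$. Set $s^*:=\max(u_{q_0})$, so $\neg P(s^*)$ by s-toppedness. It then remains to attach a single new top level at $t$.

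To build $q$, I realize $q_0$ inside $M^*$: the complete principal type $q_0(\xbar_{q_0})$ is generated by a single formula, so $\exists\xbar\, q_0(\xbar)$ is a sentence true in $N^*$, hence in $M^*\preceq N^*$, and we may choose $A\subseteq M^*$ with $\tp(A/\emptyset,N^*)=q_0(\xbar_{q_0})$. Because $A\subseteq M^*$ but $a^*\in N^*\setminus M^*$, the submodel $M^*$ itself witnesses $a^*\notin\pcl(A,N^*)$, so $\tp(a^*A/\emptyset,N^*)$ is a complete principal type extending $q_0$ in which the first coordinate is not pseudo-algebraic over the remainder. Let $\xbar_q:=\{x_{t,0}\}\cup\xbar_{q_0}$ and let $q(\xbar_q)$ be the (complete) formula isolating $\tp(a^*A/\emptyset,N^*)$, with $x_{t,0}$ corresponding to $a^*$. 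For each $t'\in u_{q_0}\cap P$, retain $(g_{0,q}(t'),g_{1,q}(t'))=(g_{0,q_0}(t'),g_{1,q_0}(t'))$. At the new $P$-level $t$, take $g_{0,q}(t)$ to be the length-one sequence $(s^*)$, and define $g_{1,q}(t)\colon\xbar_q\to N^*$ by $x_{t,0}\mapsto a^*$ and $\xbar_{q_0}\mapsto A$.

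The verification of the $\QQ_I$-clauses for $q$ is nearly mechanical. Constraints at levels below $t$ are inherited from $q_0\in\QQ_I$. Striation constraint 2(c) at $t$ reduces to $a^*\notin\pcl(A,N^*)$, already witnessed above. Constraint group 3 at $t$ holds because $\ell_{q,t}=1$ and $s^*\in u_q$ occupies the topmost $E$-class below $t$ with $\neg P(s^*)$. Constraint group 4 at $t$ is immediate since $g_{1,q}(t)$ is elementary by construction, $g_{1,q}(t)(x_{t,0})=a^*$, $g_{1,q}(t)[\xbar_q\mr{=t}]=\{a^*\}\subseteq N^*\setminus M^*$, and $g_{1,q}(t)[\xbar_q\mr{<t}]=A\subseteq M^*$. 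Constraint 5 at $t$ is vacuous because $i<\ell_{q,t}-1=0$. Hence $q\in\QQ_I$, $q\ge p$, $\max(u_q)=t$, and $u_q\cap P=(u_p\cap P)\cup\{t\}$, as required.

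The only (mild) obstacle is reconciling Striation 2(c) and constraints 4(c,d) at the new level $t$ simultaneously: the element occupying $x_{t,0}$ must avoid $\pcl$ of everything below \emph{and} lie outside $M^*$ while all of $\xbar_q\mr{<t}$ lies inside $M^*$. This is exactly what the Data~\ref{DataA} ingredient $a^*\in N^*\setminus M^*$ is designed to supply, which is why realizing $q_0$ inside $M^*$ and then topping with $a^*$ is the correct configuration.
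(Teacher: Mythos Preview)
Your argument is correct and follows essentially the same approach as the paper's proof: reduce to the s-topped case via Lemma~\ref{s-toppedL}, realize the condition inside $M^*$, and then top with $a^*$ at level $t$, taking $g_{0,q}(t)$ to be the singleton sequence pointing at the maximal $E$-class below $t$. Your explicit justification that the passage through Lemma~\ref{s-toppedL} (via Lemma~\ref{alternate}) does not enlarge $u\cap P$ is a detail the paper leaves implicit but which is indeed needed for the stated conclusion $u_q\cap P=(u_p\cap P)\cup\{t\}$.
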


Proof.  Given $p$ and $t$ as assumed, first note that by Lemma~\ref{s-toppedL}, we may assume $p$ is s-topped.  Choose any $B$ from $M^*$ realizing $p(\xbar_p)$.
We construct $q$ as follows: $\xbar_q=\{x_{t,0}\}\cup\xbar_p$, and put $q(\xbar_q)=\tp(a^*B,N^*)$,
where $a^*$ is the `preferred element' in $N^*\setminus M^*$.  For $t'\in u_p\cap P$, take $\gbar_q(t')=\gbar_p(t')$.  Let $g_{1,q}(t)$ be the elementary map from
$\xbar_q$ onto $a^*B$, and let $g_{0,q}(t)=s$ for any choice of  $s\in u_p$ for which $s/E$ is maximal in $u_p$.
It is easily verified that $q$ is as required.
$\qed_{\ref{t-topped}}$

\medskip\par\noindent
{\bf A.  Surjectivity}
Our first group of constraints ensure that for any generic $G\subseteq\QQ_I$,
for every $(t,n)\in I\times\omega$, there is $p\in G$ such that $x_{t,n}\in \xbar_p$.
To enforce this, for
any $(t,n)\in I\times\omega$, let
$$\A_{t,n}=\{p\in\QQ_I: x_{t,n}\in\xbar_p\}$$

\begin{Claim}  \label{surjective}
For every $(t,n)\in I\times\omega$, $\A_{t,n}$ is dense and open.
\end{Claim}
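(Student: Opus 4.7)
Openness is immediate from the definition of $\le_\QQ$: if $p \in \A_{t,n}$ and $q \ge_\QQ p$, then $\xbar_p \subseteq \xbar_q$, so $x_{t,n} \in \xbar_q$. For density, fix $p \in \QQ_I$ and $(t,n) \in I\times\omega$; the plan is to produce $q \ge p$ with $t \in u_q$ and $n_{q,t} > n$ in two stages.

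\emph{Stage 1 (put $t$ into $u_q$).} If $t \in u_p$ already, skip. Otherwise, first suppose $t > s$ for every $s \in u_p$: apply Lemma~\ref{t-topped} if $P(t)$, or Lemma~\ref{alternate} with any non-pseudo-algebraic complete formula extending $p$ if $\neg P(t)$. If instead some element of $u_p$ lies above $t$, use density of $(I,<)$ to pick $s^* \in I$ with $t < s^* < \min(u_p \cap I\mr{>t})$; then $J := I\mr{<s^*}$ is endless, $t \in J$, and $u_p \cap J$ lies strictly below $t$. Truncate via Lemma~\ref{truncate} to $p \mr J \le p$, apply the previous case inside $J$ to produce $q_0 \ge p\mr J$ with $t \in u_{q_0}$ and $u_{q_0} \subseteq J$, and invoke Proposition~\ref{compatible} to merge $p$ and $q_0$ into a common extension whose support contains $t$.

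\emph{Stage 2 (raise $n_{q,t}$).} Replace $p$ by the outcome of Stage 1, so $t \in u_p$. If $p$ happens to remain trivial (possible only when $t = \min(I)$), first insert an auxiliary element at some non-$P$ level above $\min(I)$ via Lemma~\ref{alternate}. Now take $\phi(y, \xbar_p) := p(\xbar_p) \wedge (y = x_{t,0})$, a complete pseudo-algebraic formula including $p$. For each $w < t$, the restriction $\phi_w(y, \xbar_p \mr{\le w})$ forces $y$ to realize $\tp(x_{t,0}/\xbar_p\mr{\le w})$, which by Striation Constraint~2(c) is non-pseudo-algebraic; hence $t$ is the least level at which $\phi$ becomes pseudo-algebraic. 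Apply Lemma~\ref{bigHenkin}, using its Moreover clause to set $c_{t'} := g_{1,p}(t')(x_{t,0})$ for each $t' \in u_p \cap P$ with $t' \ge t$; when $P(t)$ holds this forces $c_t = a^* \in N^*\setminus M^*$ (by Constraints~4(b),(c) on $p$), so Case~2b of the proof of Lemma~\ref{bigHenkin} applies and the new variable lands at level exactly $t$. Iterate until $n_{p,t} > n$.

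The main obstacle is keeping the new variable pinned to level $t$ at each invocation of Lemma~\ref{bigHenkin}: that case analysis drops the level below $t$ whenever $\phi_w$ is pseudo-algebraic for some $w < t$, or (when $P(t)$) whenever the Moreover witness $c_t$ lies in $M^*$ rather than in $N^*\setminus M^*$. The choice $\phi = p(\xbar_p) \wedge (y = x_{t,0})$ together with the canonical $c_t = a^*$ eliminates both possibilities, routing the $P(t)$-case uniformly through Case~2b of Lemma~\ref{bigHenkin}.
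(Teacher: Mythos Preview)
Your proof is correct, and Stage~1 is essentially the paper's argument (with slightly cleaner bookkeeping: you take $J=I\mr{<s^*}$ for $s^*>t$ rather than $J=I\mr{<t}$, which sidesteps a minor awkwardness in applying Proposition~\ref{compatible}).

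Stage~2, however, is a genuinely different route. The paper does not invoke Lemma~\ref{bigHenkin} at all for this step: once $x_{t,n}\in\xbar_q$, it simply adjoins $x_{t,n+1}$ by declaring $x_{t,n+1}=x_{t,n}$, extending each $g_{1,q}(t')$ for $t'\ge t$ by $g_{1,r}(t')(x_{t,n+1}):=g_{1,q}(t')(x_{t,n})$, and leaving $g_0$ unchanged. All the constraints are preserved trivially because the new variable is literally a duplicate. Your approach instead routes through the case analysis of Lemma~\ref{bigHenkin}, choosing $\phi(y,\xbar_p)=p(\xbar_p)\wedge(y=x_{t,0})$ and fixing $c_t=g_{1,p}(t)(x_{t,0})=a^*\in N^*\setminus M^*$ so that, when $P(t)$ holds, Case~2b (rather than Case~2c) fires and the new variable lands at level exactly $t$. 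This works, and it illustrates a useful general technique for steering Lemma~\ref{bigHenkin} to a prescribed level---but here it is considerably more machinery than needed, since the paper's one-line ``duplicate the variable'' construction achieves the same end without any case analysis. (A minor cosmetic point: the Moreover clause of Lemma~\ref{bigHenkin} formally asks for $c_{t'}$ at every $t'\in u_p\cap P$, including $t'<t$; you only specify them for $t'\ge t$. This is harmless, since those values are irrelevant to the outcome, but a sentence saying ``choose arbitrary witnesses for $t'<t$'' would make the invocation airtight.)
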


Proof.  Each of these sets are trivially open.
We first show that $\A_{t,0}$ is dense for every $t\in I$.  To see this,  first recall that $\A_{\min(I),0}=\QQ_I$ by Striation constraint 1(a).
Next, fix $t\neq\min(I)$ and choose $p\in\QQ_I$ arbitrarily.
Take the endless proper initial segment $J=I_{<t}$ and let $q=p\mr J$.  Using either Lemma~\ref{s-toppedL} or Lemma~\ref{t-topped}
(depending on whether or not $P(t)$ holds) we get $q'\ge q$ with $u_{q'}\subseteq J$ and $q'\in\A_{t,0}$.  Now, using Proposition~\ref{compatible},
we get $r\ge p$ and $r\ge q'$.  As $r\in\A_{t,0}$, we have shown $\A_{t,0}$ to be dense.

Next, we prove by induction on $n$ that if $\A_{t,n}$ is dense, then so is $\A_{t,n+1}$.  But this is trivial.  Fix $t$ and choose
$p\in\QQ_I$ arbitrarily.  By our inductive hypothesis, there is $q\ge p$ with $x_{t,n}\in \xbar_q$.  If $x_{t,n+1}\in\xbar_q$, there is nothing to
prove, so assume otherwise.  Then, necessarily, $n_{q,t}=n+1$.  Let $r$ be the extension of $q$ formed by $\xbar_r=\xbar_q\cup\{x_{t,n+1}\}$
and $r(\xbar_r)$ the complete type generated by $q(\xbar_q)\cup\{x_{t,n+1}=x_{t,n}\}$.  We take $g_{0,r}=g_{0,q}$ and for every $t'\in u_q\cap P$ with $t\le t'$,
let $g_{1,r}(t')$ be the one-point extension of $g_{1,q}(t')$ satisfying $g_{1,q}(t')(x_{t,n+1})=g_{1,q}(t')(x_{t,n})$.
$\qed_{\ref{surjective}}$

Next we describe the Henkin constraints.

\medskip\par\noindent
{\bf B. Henkin}  For every $t\in I\setminus\{\min(I)\}$, finite sequence $\zbar$ from $I_{<t}\times\omega$ (in the sense of Definition~\ref{finiteseq})
and complete formula $\phi(y,\zbar)$,
$$\B_{t,\phi}=\{p\in\QQ_I:\zbar\subseteq \xbar_p \ \hbox{and {\bf either} $\neg\exists y\phi(y,\zbar)\in p$ {\bf or} $\phi(x_{s,m},\zbar)\in p$ for some $s<t,m<\omega$}\}$$

\begin{Claim}  \label{Henkin}
For every $t\in I\setminus\{\min(I)\}$ and complete $\psi(y,\zbar)$, $\B_{\psi,t}$ is dense and open.
\end{Claim}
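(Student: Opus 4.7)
Openness of $\B_{t,\phi}$ is immediate from the definitions: if $p\in\B_{t,\phi}$ and $q\ge_{\QQ_I} p$, then $\zbar\subseteq\xbar_p\subseteq\xbar_q$ and the witnessing formula (either $\neg\exists y\,\phi(y,\zbar)$ or $\phi(x_{s,m},\zbar)$ for some $s<t$) still lies in $q(\xbar_q)$, since $q$ extends the complete type $p(\xbar_p)$.

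For density, fix $p\in\QQ_I$. First apply Claim~\ref{surjective} finitely often to force $\zbar\subseteq\xbar_p$; since every index of $\zbar$ lies in $I_{<t}\times\omega$ this does not interfere with what follows. Because $p(\xbar_p)$ is a complete type it decides $\exists y\,\phi(y,\zbar)$: if $\neg\exists y\,\phi(y,\zbar)\in p$ we are already in $\B_{t,\phi}$, so assume $p\vdash\exists y\,\phi(y,\zbar)$. Set $J:=I_{<t}$, which is an endless proper initial segment. The plan is to construct a condition $q'\in\QQ_I$ with $u_{q'}\subseteq J$ and $p\mr J\le q'$ which contains $\phi(x_{s,m},\zbar)$ for some $s<t$, and then invoke Proposition~\ref{compatible} on $p,q'$ and the endless $J$ to produce the required common extension $r\in\B_{t,\phi}$.

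To build $q'$, set $q:=p\mr J$ and pick any complete type $\phi'(y,\xbar_q)$ extending $q(\xbar_q)\cup\{\phi(y,\zbar)\}$. In the main case, $q$ is non-trivial, and Lemma~\ref{bigHenkin} applied to $q,\phi'$ produces a one-point simple extension $q'$ of $q$ introducing a single new variable $x_{s,m}$. I claim $s<t$: if $\phi'$ is pseudo-algebraic, then the least $w\in u_q$ with $\phi'_w$ pseudo-algebraic exists, and since $u_q\subseteq J=I_{<t}$, clause~(3) of Lemma~\ref{bigHenkin} forces $s\le w<t$; if instead $\phi'$ is not pseudo-algebraic, then clause~(4) applied with the endless $J\supseteq u_q$ lets me require $s\in J$, so again $s<t$. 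Because the extension is simple, $u_{q'}\subseteq u_q\cup\{s\}\subseteq J$. The residual edge case is when $q$ is trivial (so $u_q=\{\min(I)\}$ and every index of $\zbar$ equals $\min(I)$): if $\phi(y,\zbar)$ is pseudo-algebraic then every witness lies in $\pcl(\emptyset)$ by clause~2(b) of the forcing, so I simply raise $n_{q,\min(I)}$ by one and let the new variable at level $\min(I)<t$ realize $\phi$; if $\phi(y,\zbar)$ is not pseudo-algebraic I apply Lemma~\ref{alternate} with any $s\in J$ satisfying $s>\min(I)$ and $\neg P(s)$, which exists since non-$P$ classes are dense in $I/E$.

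The main obstacle throughout is forcing the new Henkin witness to land at an index strictly below $t$; all the rest is bookkeeping. Restricting to $p\mr J$ with $J=I_{<t}$ before calling Lemma~\ref{bigHenkin} is precisely what confines both the pseudo-algebraic ``$w$'' of clause~(3) and the enclosing endless segment of clause~(4) inside $I_{<t}$, guaranteeing $s<t$ automatically. Proposition~\ref{compatible} then re-attaches everything in $u_p\setminus J$ above $t$ without disturbing the witness.
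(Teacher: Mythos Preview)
Your proof is correct. The approach differs from the paper's in one notable way: the paper applies Lemma~\ref{bigHenkin} directly to (a non-trivial extension of) the full condition $p$, whereas you first restrict to $q=p\mr J$ with $J=I_{<t}$, apply Lemma~\ref{bigHenkin} (or Lemma~\ref{alternate}) there, and then invoke Proposition~\ref{compatible} to re-attach the part of $u_p$ above $t$. Your route makes explicit why the new witness $x_{s,m}$ lands at a level $s<t$: since $u_q\subseteq I_{<t}$, clause~(3) of Lemma~\ref{bigHenkin} forces $s\le w<t$ in the pseudo-algebraic case, and clause~(4) with the endless $J=I_{<t}$ handles the non-pseudo-algebraic case. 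The paper's one-line appeal to Lemma~\ref{bigHenkin} is terse on exactly this point (when $u_p$ has elements at levels $\ge t$ and the chosen $\phi(y,\xbar_p)$ is not pseudo-algebraic, clause~(4) does not literally apply), so your restrict-then-reattach argument is a welcome clarification. It is also the same pattern the paper itself uses in the density proof of $\A_{t,0}$ in Claim~\ref{surjective}. Your separate treatment of the edge case where $q$ is trivial is necessary and handled correctly.
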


Proof.  Fix $t$ and $\phi$.  Clearly, $\B_{t,\psi}$ is open.  To show density, choose any $p\in\QQ_I$.
By applying Lemma~\ref{s-toppedL}, we may assume $p$ is non-trivial.
By iterating Claim~\ref{surjective}, we may also assume $\zbar\subseteq\xbar_p$.
There are now two cases:  First, if $p\vdash \neg\exists y\psi(y,\zbar)$, then $p\in\B_{t,\psi}$, so assume otherwise, i.e., $p(\xbar_p)\cup\{\psi(y,\zbar)\}$ is consistent.
Choose any complete formula $\phi(y,\xbar_p)$ extending $\psi$ that includes $p$.  It follows immediately from Lemma~\ref{bigHenkin} that there  is a simple
extension $q$ of $p$ with $q\in\B_{t,\psi}$.
$\qed_{\ref{Henkin}}$

In order to ensure our generic model is `full' we need a minor variant of the Henkin constraints.

 \medskip\par\noindent {\bf C.  Fullness}
Suppose $\zbar$ is a finite sequence (in the sense of Definition~\ref{finiteseq}), $t\in I$, and a formula $\phi(y,\zbar)$ satisfies
 $\phi(y,\zbar)\vdash\theta(\zbar)$ for some complete formula $\theta(\zbar)$, but $\phi(y,\zbar)$
 is not pseudo-algebraic.

$$\CC_{\phi,t}=\{p\in\QQ_I:\ \hbox{there is}\ s>t, s\in u_p,\zbar\subseteq\xbar_p, p\vdash\phi(x_{s,0},\zbar)\}$$

\begin{Claim}  \label{fullness}  Each is $\CC_{\phi,t}$ is dense and open.
\end{Claim}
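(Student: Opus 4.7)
Openness is immediate from the definition: if $p\in\CC_{\phi,t}$ is witnessed by $s$ and $\zbar$, and $q\ge_{\QQ}p$, then $s\in u_p\subseteq u_q$ still satisfies $s>t$, $\zbar\subseteq\xbar_p\subseteq\xbar_q$, and $q(\xbar_q)$ extends $p(\xbar_p)$, so $q\vdash\phi(x_{s,0},\zbar)$. For density, fix an arbitrary $p\in\QQ_I$; the plan is to produce $q\ge p$ in $\CC_{\phi,t}$ by installing a new top level $s\in I$ with $s>t$ and $\neg P(s)$ at which $x_{s,0}$ realizes $\phi$, using Lemma~\ref{alternate}. Iterating Claim~\ref{surjective} finitely often we may assume $\zbar\subseteq\xbar_p$; since $p(\xbar_p)$ is complete and $\phi\vdash\theta(\zbar)$ with $\theta$ complete, either $p\vdash\theta(\zbar)$ or $p\vdash\neg\theta(\zbar)$, and in the latter case no extension of $p$ can meet $\CC_{\phi,t}$ (so density here is to be understood above conditions compatible with $\theta(\zbar)$); we henceforth assume $p\vdash\theta(\zbar)$.

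The central step is to produce a complete, non-pseudo-algebraic formula $\phi^*(y,\xbar_p)$ that extends $\phi(y,\zbar)$ and includes $p$ in the sense of Definition~\ref{partitioned}. We work inside the countable atomic homogeneous model $N$: fix $\bbar_\zbar\subseteq N$ realizing $\theta(\zbar)$, and since $\phi(y,\bbar_\zbar)$ is not pseudo-algebraic, choose $M_0\preceq N$ with $\bbar_\zbar\subseteq M_0$ together with some $c_0\in N\setminus M_0$ satisfying $N\models\phi(c_0,\bbar_\zbar)$. The principal type $p$ is isolated by a single complete formula $\psi_p(\xbar_p)$, so $N\models\exists\wbar\,\psi_p(\bbar_\zbar,\wbar)$; by $M_0\preceq N$ there is $\wbar\subseteq M_0$ with $M_0\models\psi_p(\bbar_\zbar,\wbar)$, giving a realization $\bbar:=\bbar_\zbar\wbar\subseteq M_0$ of $p$. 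Since $M_0$ contains $\bbar$ but not $c_0$, we have $c_0\notin\pcl(\bbar,N)$. Taking $\phi^*(y,\xbar_p)$ to be the complete formula isolating $\tp(c_0\bbar/\emptyset)$, we obtain a formula that extends $\phi$, includes $p$, and is not pseudo-algebraic.

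To finish, pick $s\in I$ with $\neg P(s)$, $s>t$, and $s>s'$ for every $s'\in u_p$; such $s$ exists because the $E$-classes with $\neg P$ are dense in $I/E$ and $I$ is $\aleph_1$-like. Apply Lemma~\ref{alternate} to $p$, $s$, and $\phi^*$ to obtain $q\ge p$ with $\xbar_q=\{x_{s,0}\}\cup\xbar_p$ and $q(\xbar_q)=\phi^*$. Then $\zbar\subseteq\xbar_q$, $s\in u_q$, $s>t$, and $q\vdash\phi^*(x_{s,0},\xbar_p)\vdash\phi(x_{s,0},\zbar)$, so $q\in\CC_{\phi,t}$, as required. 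The anticipated main obstacle is not the application of Lemma~\ref{alternate} but rather the construction of $\phi^*$: one must argue that the non-pseudo-algebraicity of $\phi(y,\bbar_\zbar)$ survives enlargement from $\bbar_\zbar$ to a full realization of $p$, which is exactly what the elementary submodel argument using principality of $p$ secures.
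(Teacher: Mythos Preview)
Your proof is correct and follows the paper's overall plan: enlarge $p$ so that $\zbar\subseteq\xbar_p$, produce a complete non-pseudo-algebraic extension $\phi^*$ of $\phi$ that includes $p$ (your elementary-submodel construction is essentially Remark~\ref{useremark} with the roles of the smaller and larger parameter sets exchanged), and then add a witness at a level above $t$. The difference is in the final step. The paper first arranges some $s^*>t$ inside $u_p$ and then invokes Lemma~\ref{bigHenkin} to obtain a \emph{simple} extension in which the witness $x_{s,0}$ lands in the top $E$-class of $u_p$ (so $s\ge s^*>t$), leaving $u_q\cap P$ and $\gbar$ unchanged. You instead choose a fresh $s$ with $\neg P(s)$ above all of $u_p$ and above $t$, and apply Lemma~\ref{alternate}, which adds a new top $E$-class. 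Both routes work here; yours is more direct (no need to pre-insert $s^*$), while the paper's produces a simple extension, a feature irrelevant for membership in $\CC_{\phi,t}$. Your hedge about the case $p\vdash\neg\theta(\zbar)$ is also well taken: in that case no extension of $p$ lies in $\CC_{\phi,t}$, so literal density fails, and the paper's proof has the same unacknowledged gap (it silently assumes $p(\xbar_p)\cup\{\phi(y,\zbar)\}$ is consistent). For the intended application---fullness of the generic $N[G]$---one only needs density of $\{p:\zbar\subseteq\xbar_p\ \text{and}\ (p\vdash\neg\theta(\zbar)\ \text{or}\ p\in\CC_{\phi,t})\}$, which both arguments deliver.
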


Proof.  Fix $\phi(y,\zbar)$ and $t$, and choose any $p\in\QQ_I$.  Fix any $s^*>t$.  By extending $p$ as needed, by   Claim~\ref{surjective} we may assume
$s^*\in u_p$  and $\zbar\subseteq\xbar_p$. As $\phi(y,\zbar)$ is not pseudo-algebraic, there is a complete, non-pseudo-algebraic $\psi(y,\xbar_p)$ extending $\phi$
and including $p$.   By Lemma~\ref{bigHenkin} there is a simple extension $q$ of $p$ and $s\ge s^*$ (hence $s>t$) such that $q\vdash\phi(x_{s,0},\zbar)$.
$\qed_{\ref{fullness}}$

\medskip\par\noindent
{\bf D. ${\bf g_0}$ cofinal}  We introduce two sets of conditions that guarantee that in any generic $G$, $g_0(t)$ will describe an $\omega$-chain that is cofinal in $I_{<t}$.
\medskip\par
Fix $t\in I$ such that $P(t)$ holds.
\begin{itemize}
\item For each $s<t$, $\D_{t,s}=\{p\in\QQ_I:t\in u_p\ \hbox{and for some $n$, $s/E< g_{0,p}(t)(n)/E<t$}\}$;
\item For every $n\in\omega$, $\E_{t,n}=\{p\in\QQ_I:t\in u_p\ \hbox{and}\ n\in\dom(g_{0,p}(t))\}$.
\end{itemize}

\begin{Claim}  \label{omega}
For every $t\in I$ satisfying $P(t)$, every $s<t$, and every $n\in\omega$, both $\D_{t,s}$ and $\E_{t,n}$ are dense and open.
\end{Claim}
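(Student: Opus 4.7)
The plan is to deduce density of $\E_{t,n}$ by induction from density of $\D_{t,s}$, which is the principal case. Openness of both sets is immediate from the definition of $\le_{\QQ_I}$: the conditions ``$n\in\dom(g_{0,p}(t))$'' and ``$g_{0,p}(t)(n)/E>s/E$'' both persist under the end-extensions of $g_0(t)$ required of any $q\ge_{\QQ_I} p$.

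For density of $\D_{t,s}$, fix $p\in\QQ_I$. By Claim~\ref{surjective} I may assume $t\in u_p$, and let $s_{\ell-1}$ be the top entry of $g_{0,p}(t)$, which by constraint~3(c) lies in the topmost $E$-class of $u_p\cap I\mr{<t}$. If $s_{\ell-1}/E>s/E$ already, we are done; assume otherwise. The strategy is to produce an auxiliary condition $q_0$ below $t$ whose merger with $p$ via Proposition~\ref{compatible} forces $g_0(t)$ to acquire a new entry at an $E$-class above $s/E$. Set $J=I\mr{<t}$; since $P(t)$ holds and $I/E$ is dense (with both its $P$- and $\neg P$-classes dense), $J$ is suitable and endless, so Lemma~\ref{truncate} gives $p\mr J\in\QQ_I$. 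Using the $\II^*$-density axioms, pick $w^*\in J$ with $w^*/E$ strictly between $\max(s_{\ell-1}/E,s/E)$ and $t/E$ and with $\neg P(w^*)$; Remark~\ref{useremark} (applied to, say, $\delta(y)=\tp(a^*,N^*)$, which is non-pseudo-algebraic) then furnishes a complete, non-pseudo-algebraic formula $\phi(y,\xbar_{p\mr J})$ including $p\mr J$. Because $w^*>r$ for every $r\in u_{p\mr J}$ (as $w^*/E>s_{\ell-1}/E$ and $E$-classes are convex), Lemma~\ref{alternate} applied to $p\mr J$, $w^*$, and $\phi$ produces $q_0\ge p\mr J$ with $\xbar_{q_0}=\{x_{w^*,0}\}\cup\xbar_{p\mr J}$ and $q_0(\xbar_{q_0})=\phi$.

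Now invoke Proposition~\ref{compatible} on $p$, $q_0$, and $J$: the hypotheses ($J$ endless, $u_{q_0}\subseteq J$, $p\mr J\le q_0$) hold, yielding $r\ge p,q_0$. By construction $u_{q_0}$ obstructs $u_p$ at $J$: indeed $t^*=\min(u_p\setminus J)=t$ satisfies $P(t)$, the set $u_{q_0}\subseteq J$ is non-empty, and $s^*=\max(u_{q_0})=w^*$ satisfies $\neg P(w^*)$ with $w^*/E$ strictly above every $E$-class of $u_p\cap J$. The step of the merger handling $t$ therefore invokes the ``hard-part'' construction of Lemma~\ref{hardpart}, which explicitly appends a new element $s^{**}$ to $g_{0,r}(t)$ with $w^*/E<s^{**}/E<t/E$. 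Since $w^*/E>s/E$, the new entry witnesses $r\in\D_{t,s}$.

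Density of $\E_{t,n}$ then follows by induction on $n$. The base $n=0$ is immediate: constraint~3(a) forces $\ell_{p,t}\ge 1$ whenever $t\in u_p$, so $\E_{t,0}=\A_{t,0}$, dense by Claim~\ref{surjective}. For the step $n\to n+1$: given $p$, first extend to $p'\in\E_{t,n}$; if $\ell_{p',t}>n+1$ we are done, otherwise apply density of $\D_{t,\,g_{0,p'}(t)(n)}$ to $p'$ to get $q\ge p'$ having a new entry in $g_{0,q}(t)$ at an $E$-class strictly above $g_{0,p'}(t)(n)/E$. Because the entries of $g_0(t)$ have strictly increasing $E$-classes and $g_{0,p'}(t)$ already topped out at index $n$, this new entry must occupy index $n+1$, so $q\in\E_{t,n+1}$. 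The principal obstacle in the whole argument is verifying the obstruction clause for $u_{q_0}$ at $J$ in the precise sense demanded by Proposition~\ref{compatible}, so that the merger triggers Lemma~\ref{hardpart} at the level $t$ and yields the desired extension of $g_{0,r}(t)$; the rest is bookkeeping.
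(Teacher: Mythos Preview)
Your argument is correct and close in spirit to the paper's, but the two differ in how the auxiliary element below $t$ is introduced and in how $r\in\D_{t,s}$ is justified. The paper truncates $p$ to $q=p\mr J$ with $t=\max(u_q)$, then inserts a new $P$-element $t^*$ with $s<t^*<t$ via Lemma~\ref{t-topped}, merges with $q$ using Proposition~\ref{compatible}, and finally merges back with $p$ --- two applications of Proposition~\ref{compatible}. You instead insert a single $\neg P$-element $w^*$ via Lemma~\ref{alternate} and merge once with the full $p$; this is a genuine simplification. Where your write-up is weaker is the justification that $r\in\D_{t,s}$: you appeal to the internal construction of Proposition~\ref{compatible} (that the step at level $t$ invokes Lemma~\ref{hardpart} and appends $s^{**}$). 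This is valid, but it depends on reading the proof of Proposition~\ref{compatible} in a specific bottom-up order, which that proof only sketches. The robust argument --- which the paper uses --- is purely black-box: any $r\ge p,q_0$ has $w^*\in u_r$ and $t\in u_r\cap P$, so by constraint~3(c) the top entry of $g_{0,r}(t)$ lies in the topmost $E$-class of $u_r$ below $t$, which is $\ge w^*/E>s/E$. This makes the obstruction check and the reference to Lemma~\ref{hardpart} unnecessary. Your treatment of $\E_{t,n}$ is essentially identical to the paper's.
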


Proof.  That each set  is open is immediate.  Fix $t$ such that $P(t)$ holds.  We first argue that $\D_{t,s}$ is dense for every $s<t$.
To see this, fix $s<t$ and choose any $p\in\QQ_I$.  By Claim~\ref{surjective} we may assume $t\in u_p$.
Choose an endless initial segment $J$
such that $t=\max(u_p\cap J)$.  Let $q=p\mr J$ and let $q_1=p\mr{<t}$.  Note that $\max(u_q)=t$.
 Let $s^*=\max(u_{q_1}\cup\{s\})$.  From the definition of $\II^*$, choose $t^*\in I$
such that $s^*<t^*<t$ and $P(t^*)$ holds.

We first find $r\ge q$ with $\max(u_r)=t$ and $r\in\D_{t,s}$.
By Lemma~\ref{t-topped}, choose $q_2\ge q_1$ with $\max(u_{q_2})=t^*$.  Apply Proposition~\ref{compatible} to
$q_2$ and $q$ to obtain an upper bound $r$ satisfying $\max(u_r)=t$.  As  $r\in\QQ_I$ while $s$ is not in the maximal $E$-class of $u_r\setminus \{t\}$,
by Condition~3(c) there is $n$ such that $s/E<g_{0,r}(t)(n)$. That is, $r\in\D_{t,s}$.  Finally, apply Proposition~\ref{compatible} again to $p$ and $r$ to get an extension of $p$ that is in $\D_{t,s}$.

Next,  we argue by induction on $n$ that each set $\E_{t,n}$
is dense.  We begin with $n=0$.  Given $t$ and any $p\in\QQ_I$, by Claim~\ref{surjective} there is $q\ge p$ with $t\in u_q$.  As $q\in\QQ_I$, Condition~3(a) implies
that $q\in\D_{t,0}$.

Finally, assume $\E_{t,n}$ is dense.  Choose any $p\in \QQ_I$.  Using the density of $\E_{t,n}$, we may assume $n\in\dom(g_{0,p}(t))$.  If $(n+1)\in\dom(g_{0,p})$
then there is nothing to prove, so assume otherwise.  That is, $\dom(g_{0,p}(t))=\{0,\dots,n\}$.  Let $s=g_{0,p}(t)(n)$.  Choose $s'$ satisfying
$s/E<s'/E<t$.  As $\D_{t,s'}$ is dense, choose $q\ge p$
with $q\in \D_{t,s'}$.  As $s'\in u_q$, $s/E$ is not in the maximal $E$-class of $u_q$ below $t$, hence $g_{0,q}(t)$ properly extends $g_{0,p}(t)$, so $q\in\E_{t,n+1}$.
$\qed_{\ref{omega}}$

 \medskip\par\noindent {\bf F.  Adjusting the level}
Suppose $t\in I$ such that $P(t)$ holds, $\zbar$ is a finite sequence (in the sense of Definition~\ref{finiteseq}) from $I_{\le t}\times\omega$, $w>t$ and $n\in\omega$.
Then $\F_{t,\zbar,w,n}$ consists of all $p\in\QQ_I$ such that $\{x_{w,n}\}\cup\zbar\subseteq \xbar_p$ and {\bf either} $p$ `says' $x_{w,n}\not\in\pcl(\zbar)$ {\bf or}
$p$ `says' $x_{w,n}=x_{s,m}$ for some $s\le t$, $m\in\omega$.

\begin{Claim}  \label{level}
For all $t,\zbar,w,n$ as above, $\F_{t,\zbar,w,n}$ is dense and open.
\end{Claim}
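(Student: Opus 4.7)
The plan is straightforward: verify openness by inspection, then establish density via a case analysis that reduces the nontrivial case to Lemma~\ref{bigHenkin}.

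Openness is immediate. Both alternatives in the defining disjunction of $\F_{t,\zbar,w,n}$ --- that $p$ forces $x_{w,n}\not\in\pcl(\zbar)$, or that $p$ forces $x_{w,n}=x_{s,m}$ for some $s\le t$ and $m\in\omega$ --- are preserved when passing to a stronger condition, because the complete type of an extension $q$ extends the complete type of $p$ and cannot un-decide something $p$ has already decided.

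For density, given an arbitrary $p\in\QQ_I$, I would first invoke Claim~\ref{surjective} iteratively to strengthen $p$ so that $\{x_{w,n}\}\cup\zbar\subseteq\xbar_p$ and additionally $x_{t,0}\in\xbar_p$ (so $t\in u_p$). Since $p(\xbar_p)$ is a complete type and the relation ``$a\in\pcl(\bbar)$'' depends only on $\tp(a\bbar)$, $p$ decides whether $x_{w,n}\in\pcl(\zbar)$. If $p$ decides negatively, then $p\in\F_{t,\zbar,w,n}$ and we are done. If instead $p$ forces $x_{w,n}\in\pcl(\zbar)$, then I would form the complete formula $\phi(y,\xbar_p)$ that isolates $p(\xbar_p)\wedge (y=x_{w,n})$ and apply Lemma~\ref{bigHenkin} to $\phi$ to obtain a simple extension $q$ with $\xbar_q=\{x_{s,m}\}\cup\xbar_p$ and $q(\xbar_q)=\phi(x_{s,m},\xbar_p)$; in particular, $q\vdash x_{s,m}=x_{w,n}$.

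To locate the new level $s$ at or below $t$, I would use clause (3) of Lemma~\ref{bigHenkin}: it suffices to verify that the restriction $\phi_t(y,\xbar_p\mr{\le t})$ is pseudo-algebraic in the sense of Definition~\ref{partitioned}. This follows from the containment $\zbar\subseteq\xbar_p\mr{\le t}$: in any realization $\bbar$ of $p(\xbar_p)$, the value $b_{w,n}$ lies in $\pcl(\bbar\mr\zbar)\subseteq\pcl(\bbar\mr{\le t})$, which means the restricted type of $y=x_{w,n}$ to the variables $\xbar_p\mr{\le t}$ is pseudo-algebraic. Hence the lemma delivers $s\le t$, and the resulting $q$ lies in $\F_{t,\zbar,w,n}$.

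The main step requiring care is the pseudo-algebraicity check on $\phi_t$; everything else is routine bookkeeping on top of the density results (Claim~\ref{surjective}) and the single-point extension machinery of Lemma~\ref{bigHenkin} already established in the section. The key conceptual point is simply that $\pcl$ is type-invariant, so that a complete type decides pseudo-closure relations and restricts well to subsets of variables indexed by initial segments of $I$.
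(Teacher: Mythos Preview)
Your proof is correct and follows essentially the same route as the paper's own argument: reduce to the case $\{x_{w,n}\}\cup\zbar\subseteq\xbar_p$ via Claim~\ref{surjective}, split on whether $p$ decides $x_{w,n}\in\pcl(\zbar)$, and in the pseudo-algebraic case apply Lemma~\ref{bigHenkin} to the formula $\phi(y,\xbar_p)$ generated by $p\wedge(y=x_{w,n})$, using clause~(3) and the pseudo-algebraicity of $\phi_t$ to force $s\le t$. Your extra step of arranging $t\in u_p$ is a small but welcome clarification the paper leaves implicit; without it one would instead take $w'=\max\{\text{levels occurring in }\zbar\}\in u_p$, note $w'\le t$, and apply clause~(3) at $w'$.
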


Proof.  As always, `open' is clear.  To establish density, choose any $p\in\QQ_I$.  By iterating Claim~\ref{surjective} we may assume $\{x_{w,n}\}\cup\zbar\subseteq \xbar_p$.
There are now two cases.  If $p$ `says' $x_{w,n}\not\in\pcl(\zbar)$ we are done, so assume otherwise. Let $\phi(y,\xbar_p)$ be the complete formula including $p$
that extends `$y=x_{w,n}'$.  Now apply Lemma~\ref{bigHenkin} to get a simple extension $q$ of $p$ with $\xbar_q=\{x_{s,m}\}\cup\xbar_p$ and $\phi(x_{s,m},\xbar_p)$.
Note that since $x_{w_n}\in\pcl(\zbar)$, we have that $\phi_t(y,\xbar_p\mr{\le t})$ is pseudo-algebraic.  Thus, $s\le t$ as required to show $q\in\F_{t,\zbar,w,n}$.
$\qed_{\ref{level}}$

\medskip

We now verify that the forcing $(\QQ_I,\le_\QQ)$ satisfies the conclusions of Proposition~\ref{forcing}.  Suppose $G\subseteq\QQ_I$ is a filter meeting every dense open subset.
Let $$X[G]=\bigcup\{p(\xbar_p):p\in G\}$$
Because of the dense subsets $\A_{t,n}$, $X[G]$  describes a complete type in the variables $\{x_{t,n}:t\in I,n\in\omega\}$.
There is a natural equivalence relation $\sim_G$ on $X[G]$ defined by
$$x_{t,n}\sim_G x_{s,m}\qquad\hbox{if and only if}\qquad X[G] \ \hbox{`says'}\ x_{t,n}=x_{s,m}$$

Let $N[G]$ be the $\tau$-structure with universe $X[G]/\sim_G$. As notation, for each $t,n$, let $a_{t,n}\in N[G]$ denote $[x_{t,n}]=x_{t,n}/\sim_G$.
 As each $p\in\QQ_I$ describes a complete (principal) formula with respect to $T$, $N[G]$ is an atomic set.  As well, it follows from Claim~\ref{Henkin}
that $N[G]\models T$.

For each $t\in I$ such that $P(t)$ holds, let $N_t=\{[x_{w,n}]:$ some $x_{s,m}\in[x_{w,n}]$ with $s<t\}$.
Similarly, for each $s\in I\setminus\{\min(I)\}$ with $\neg P(s)$, let
$N_s=\{[x_{w,n}]:w/E < s/E\}$.

By repeated use of Claim~\ref{Henkin}, each $N_t$ and $N_s$ are elementary substructures of $N[G]$.  Note that  $N_{s'}=N_s$ whenever $E(s',s)$.

Given any $(w,n)$, if there is a least $s\in I$ such that $a_{w,n}=a_{s,m}$ for some $m\in\omega$,
then we say
{\em $a_{w,n}$ is on level $s$}.  For an arbitrary $(w,n)$, a least $s$ need not exist, but it does in many cases.
 Recall that the Striation constraints imply that for every $w\in I$,  $a_{w,0}$ is on level $w$.
As well, for any $n>0$, $a_{t,n}$ is on level $t$ for any $t$ such that $P(t)$ holds.
Because of the Level constraints (group $\FF$) for any $t$ such that $P(t)$ holds, if $b\in N[G]$ and $b\in\pcl(\{a_{s,m}:s\le t,m\in\omega\},N[G])$,
$b=[x_{s',m'}]$ for some $m'\in\omega$ and $s'\le t$.

As $|I|=\aleph_1$ and the fact that each $a_{t,0}\not\in\pcl(N_{t}, N[G])$, $||N[G]||=\aleph_1$.
Finally, it follows from the density of the `Fullness conditions' that $N[G]$ is full.

More information about $N[G]$ can be gleaned from the functions $g_1$ and $g_0$.
Fix any $t$ such that $P(t)$ holds.  Let $B_t\subseteq N[G]$ consist of all elements at level exactly $t$.
Define $$g_1^*(t):B_t\cup N_t\rightarrow N^*$$
by $g_1^*(t)(a_{s,m})=g_{1,p}(t)(x_{s,m})$ for some $p\in G$.  As each  map $g_{1,p}(t)$ is elementary and $g_{1,p}(t)\subseteq g_{1,q}(t)$ whenever $q\ge p$, this is well-defined.
Because of Constraints~4(c,d), $g_1^*$ maps $N_t$ into $M^*$ and takes $B_t$ into $N^*\setminus M^*$.
Furthermore, because of Lemma~\ref{addN*},  $g_1^*$ maps onto $N^*$.  It follows that $B_t\cup N_t$ is the universe of an elementary submodel of $N[G]$
that is isomorphic to $N^*$ via $g_1^*$.  Similarly, the restriction of $g_1^*(t)$ to $N_t$ is onto, hence yields an isomorphism between $N_t$ and $M^*$.
Finally, the restriction of $g_1^*(t)$ to $B_t$ is onto $N^*\setminus M^*$.  Also, by Constraint~4(b), $g_1^*(t)(a_{t,0})=a^*$.

Similarly, because of density groups $\D$ and $\E$, the function
$$g_0^*(t):\omega\rightarrow I_{<t}$$ defined by $g_0^*(t)(i)=g_{0,p}(t)(i)$ for some $p\in G$ is well-defined.  Moreover, if we let $s_i$ denote $g_0^*(t)(i)$,
then the sequence $\<s_i:i\in\omega\>$ is cofinal in $I_{<t}$ and satisfies $s_i/E<s_{i+1}/E$ for all $i$.  By Constraint group~5 and our comments about $g_1^*(t)$
above, for every $i>0$ there is $m(i)\in\omega$, $\bbar$ from $N_{s_i}$  such that
\begin{itemize}
\item $g_1^*(t)(\bbar)=\dbar_m\subseteq M^*$;
and
\item  $g_1^*(t)(a_{s_{i},0})=c_m$
\end{itemize}
As $s_i/E<s_{i+1}/E$, $a_{s_{i},0}\in N_{s_{i+1}}$.  As well, as the Striation constraints imply that $a_{s_{i},0}$ is at level $s_{i}$, $a_{s_{i},0}\not\in N_{s_i}$.
Finally, using $g_1(t)^{-1}$ the relation $$c_m\in\pcl(\dbar_ma^*,N^*)$$
translates into $$a_{s_{i},0}\in\pcl(\bbar a_{t,0},N[G])$$

It remains to verify that $N[G]$ satisfies the three conditions of Proposition~\ref{forcing}.
(1) is handled by the Henkin constraints, most notably Claim~\ref{Henkin}.

 Towards (2), the argument just given implies that $a_{t,0}$ admits a cofinal chain in $N_t$ for every $t$ such that $P(t)$ holds.
To complete the verification of (2) we show that $a_{t,0}$ also $\rk\ \infty$-catches $N_t$ whenever $P(t)$ holds.
Fix such a $t$.  We know that $N_t\preceq N[G]$, so it is pseudo-algebraically closed.
Choose any $b\in N[G]$ such that $b\in\pcl(a_{t,0}N_t,N[G])\setminus N_t$.  Because of the Level constraints (group $\F$) we have
that $b=a_{w,n}$ for some $w\le t$.  However, if $w<t$, then we would have $b\in N_t$, which it isn't.
Thus, $b$ is of level $t$, hence $b\in B_t$ in the notation defined above.  Applying $g_1^*(t)$ to $a_{t,0}, N_t, b$ yields:
$$e\in\pcl(a^*M^*,N^*)\setminus M^*$$
where $e=g_1^*(t)(b)$.
By Data~\ref{DataA}(2) of the initial data, this implies $\rk(a^*/M^*e)<\infty$.
Translating back via $g_1^*(t)$ yields $\rk(a_{t,0}/N_tb)<\infty$.
Thus, $a_{t,0}$ $\rk\ \infty$-catches $N_t$.

It remains to verify (3) of Proposition~\ref{forcing}.  Choose a seamless
$J\subseteq I$ and let $N_J:=\bigcup\{N_t:t\in J\}\preceq N[G]$.  
Choose any  $b\in N[G]\setminus N_J$  that $\rk\ \infty$ catches
$N_J$, and we  show that $b$ has bounded effect in $N_J$.
Say $b$ is $[x_{w^*,n}]$, where necessarily $w^*\in I\setminus J$.
 By the fundamental
theorem of forcing, there is $p\in G$ such that
\begin{equation}
p\Vdash [x_{w^*,n}]_{\Gdot}\ \hbox{$\rk\ \infty$-catches}\ N_J[\Gdot].  \tag{*}
\end{equation}
Thus, among other things, $p\Vdash$ `$x_{w^*,n}\neq x_{s,m}$' for all $s\in J$, $m\in\omega$.
Choose any $s^*\in J$ such that $s^*>s$ for every $s\in u_p\cap J$. (Recall $u_p$ from just below Definition~\ref{finiteseq}.)
That $b$ has bounded effect in $N_J$ follows immediately from the following Claim.

\begin{Claim}  \label{forceit}
$p\Vdash \pcl(\{[x_{w^*,n}]\}\cup N_{<s^*}[\Gdot],N[\Gdot])\cap N_J[\Gdot]\subseteq N_{<s^*}[\Gdot]$.
\end{Claim}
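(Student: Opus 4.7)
\medskip\noindent\emph{Proof proposal.} I would argue by contradiction. Suppose some $r\ge p$ forces the negation of the claim. By extending $r$ (using Claims \ref{surjective}, \ref{Henkin}, and \ref{level}) and shrinking to an explicit witness, I may assume that $r$ carries variables $x_{v',k'}\in\xbar_r$ with $v'\in u_r\cap J$, a finite tuple $\zbar\subseteq\xbar_r$ indexed by elements of $u_r\cap I_{<s^*}$, and a complete pseudo-algebraic formula $\phi(y,x_{w^*,n},\zbar)$ such that $r(\xbar_r)\vdash\phi(x_{v',k'},x_{w^*,n},\zbar)$, while $r$ decides that $v'$ lies in the portion of $J$ above $s^*$ and that no $x_{u,m}\in\xbar_r$ with $u<s^*$ satisfies $x_{u,m}=x_{v',k'}$.

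Next I would apply Lemma~\ref{seamless} to the seamless initial segment $J$, with $w=v'$ and $\SS = u_p\cup\{w^*\}\cup\{\text{indices of }\zbar\}$. Since the chosen $s^*$ satisfies $s^*>s$ for every $s\in u_p\cap J$, and the indices of $\zbar$ all lie strictly below $s^*$, every element of $\SS\cap J$ lies in $I_{<s^*}$, so $v'>s$ for all $s\in\SS\cap J$. Thus we obtain an automorphism $\pi$ of $(I,<,E,P)$ fixing $\SS$ pointwise with $\pi(v')\notin J$. Lemma~\ref{extendauto} lifts $\pi$ to an automorphism of $\QQ_I$ via $x_{t,n}\mapsto x_{\pi(t),n}$. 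Because $\pi$ fixes $u_p$ pointwise, $\pi(p)=p$, so $\pi(r)\ge p$. Since $\pi$ also fixes $x_{w^*,n}$ and every variable in $\zbar$, the condition $\pi(r)$ forces $\phi(x_{\pi(v'),k'},x_{w^*,n},\zbar)$, so $\pi(r)\Vdash [x_{\pi(v'),k'}]\in\pcl(\{[x_{w^*,n}]\}\cup N_{<s^*}[\Gdot],N[\Gdot])$, while $\pi(v')\notin J$ forces $[x_{\pi(v'),k'}]\notin N_J[\Gdot]$.

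I would then invoke Proposition~\ref{compatible} to produce a common extension $r^*\ge r,\pi(r)$; this requires choosing an endless initial segment $J'$ separating the ``new'' portion of $\pi(u_r)$ from the rest (possibly after a preparatory extension of $r$ using the simple extension machinery of Lemmas~\ref{bigHenkin} and \ref{combo} to match what $\pi$ does to $u_r\setminus\SS$). Then $r^*\ge p$ forces that $e:=[x_{\pi(v'),k'}]$ lies in $\pcl(\{[x_{w^*,n}]\}\cup N_J[\Gdot])\setminus N_J[\Gdot]$; combined with $p\Vdash [x_{w^*,n}]$ $\rk\ \infty$-catches $N_J[\Gdot]$, this yields $r^*\Vdash\rk([x_{w^*,n}]/N_J[\Gdot]\cup\{e\})<\infty$.

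The main obstacle is converting this finiteness statement into an outright contradiction, since one such $e$ with finite relative rank is exactly what catching permits. I expect this is overcome by iterating the seamless construction: choose a sequence of automorphisms $\pi_0,\pi_1,\ldots$ of $I$, each fixing $u_p\cup\{w^*\}\cup\{\text{indices of }\zbar\}$ but moving $v'$ to pairwise distinct locations $t_k\notin J$, and build a tower of common extensions in $\QQ_I$ yielding a condition $r^{**}\ge p$ that forces an $\omega$-sequence of pairwise distinct elements $e_k=[x_{t_k,k'}]$ all in $\pcl(\{[x_{w^*,n}]\}\cup[\zbar])\setminus N_J[\Gdot]$. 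Using these $e_k$ together with the original witness $[x_{v',k'}]$ and the cofinal chain available at a $P$-level above (from Proposition~\ref{forcing}(2)), one builds a better $[x_{w^*,n}]/(\text{finite base})$-chain in the sense of Definition~\ref{chaindef} whose existence forces $\rk([x_{w^*,n}]/H\cup\{e_0\})=\infty$ by Proposition~\ref{charinfty}, contradicting the finiteness just derived. The delicate technical point is verifying that the $\omega$-many common extensions can indeed be amalgamated within $\QQ_I$, which reduces to iterated applications of Proposition~\ref{compatible} along the increasing chain of indices $\{\pi_k(v'):k\in\omega\}$.
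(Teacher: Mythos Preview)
Your opening moves are sound---contradict, find a witnessing condition, invoke Lemma~\ref{seamless} and lift the automorphism---but you apply the seamless automorphism to the wrong element, and this is why you cannot close the argument.

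The paper chooses $\pi$ so that $\pi(s^*)\notin J$ (not $\pi(v')\notin J$), while fixing $u_p$, $w^*$, and the indices of the base set pointwise. Because $J$ and $\pi(J)$ are both initial segments and $\pi(s^*)\in\pi(J)\setminus J$, this forces $J\subsetneq\pi(J)$. Now the crucial step: applying $\pi'$ to $(*)$ yields $p\Vdash [x_{w^*,n}]$ $\rk\ \infty$-catches $N_{\pi(J)}[\Gdot]$, so in any generic $H\ni\pi'(q)$ (hence $H\ni p$) we have $\rk(b_H/N_{\pi(J)}[H])=\infty$. The translated $(**)$ gives $e\in N_{\pi(J)}[H]\setminus N_{<\pi(s^*)}[H]$ with $e\in\pcl(A_Hb_H)$; since $J\subseteq I_{<\pi(s^*)}$, $e\notin N_J[H]$. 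Catching at $N_J$ now says $\rk(b_H/N_J[H]\cup\{e\})<\infty$, but monotonicity and $N_J[H]\cup\{e\}\subseteq N_{\pi(J)}[H]$ give $\rk(b_H/N_J[H]\cup\{e\})=\infty$. That is the whole contradiction---catching is used at \emph{two} levels, $N_J$ and $N_{\pi(J)}$, and the second one supplies the $=\infty$ side you are missing.

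Your proposed salvage via an $\omega$-sequence $(e_k)$ cannot work: each $e_k$ lies in $\pcl(\zbar\cup\{b\})$ for the \emph{same} finite $\zbar$, so they all sit at the first step of any putative $b/\zbar$-chain and never witness $c_{i+1}\in\pcl(M_ib)\setminus M_i$ for increasing $M_i$. No amount of iterating produces a chain in the sense of Definition~\ref{chaindef}. Separately, the amalgamation of $r$ and $\pi(r)$ via Proposition~\ref{compatible} that you propose is both unnecessary and unjustified: the paper never amalgamates $q$ with $\pi'(q)$, it simply passes to a generic $H$ containing $\pi'(q)$ and reads off both $(*)$ (forced by $p\in H$) and $(**)$ (forced by $\pi'(q)\in H$) in $V[H]$.
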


Proof.  If not, then there is $q\in\QQ_I$ satisfying $q\ge p$ and a finite $A=\{x_{w_i,m_i}:i<k\}\subseteq I_{<s^*}\times\omega$ such that, letting
$A_\Gdot=\{[x_{w_i,m_i}]_{\Gdot}: i<k\}$,
$$q\Vdash \pcl(A_\Gdot[x_{w^*,n}]_\Gdot,N[\Gdot])\cap N_J[\Gdot]\not\subseteq N_{<s^*}[\Gdot]$$
Without loss, we may assume that for each $x_{w_i,m_i}\in A$, then $w_i\in u_q$.
As $J$ is seamless, by Lemma~\ref{seamless}, choose an automorphism $\pi$ of $(I,<,E,P)$ such that
$\pi\mr{\ge\min(u_p\setminus J)}=id$; $\pi(t^*)=t^*$; $\pi\mr{u_p}=id$; $\pi\mr{u_q\cap I_{<s^*}}=id$, but $\pi(s^*)\not\in J$.
By Lemma~\ref{extendauto}, $\pi$ extends to an automorphism $\pi'$ of $\QQ_I$ given by $x_{t,m}\mapsto x_{\pi(t),m}$.
By our choice of $\pi$, $\pi'(p)=p$.  Whereas $\pi'(q)$ need not equal $q$, we do have $p\le\pi'(q)$.
\begin{equation}
\pi'(q)\Vdash \pcl(A_\Gdot[x_{w^*,n}]_\Gdot,N[\Gdot])\cap N_{\pi(J)}[\Gdot]\not\subseteq N_{<\pi(s^*)}[\Gdot] \tag{**}
\end{equation}

We argue that this forcing statement contradicts the just defined $(*)$.
To see this, choose a generic $H\subseteq\QQ_I$ with $\pi'(q)\in H$.  As $p\le\pi'(q)$, we also have $p\in H$.
As above, let $N[H]\in\At$ be the structure with universe $X/\sim_H$.  Put
$N_J[H]:=\bigcup\{N_t[H]:t\in J\}\preceq N[H]$ and
 $N_{\pi(J)}[H]:=\bigcup\{N_t[H]: t\in\pi(J)\}\preceq N[H]$.
 As notation, let $b_H:=[x_{w^*,n}]_H$ and $A_H:=\{[x_{w_i,m_i}]_H:i<k\}$.
Now:

\begin{itemize}

\item  $A_H\subseteq N_J[H]$.
\item  Applying $\pi'$ to the statement $(*)$, along with $\pi'(p)=p$ and $\pi(w^*)=w^*$ yields
$$p\Vdash [x_{w^*,n}]_{\Gdot}\ \hbox{$\rk\ \infty$-catches}\ N_{\pi(J)}[\Gdot].$$
As $p\in H$ and recording only half of the definition of `$\rk\ \infty$-catches' yields
$$\rk(b_H/N_{\pi(J)},N[H])=\infty$$

\item  From $(**)$, choose $e\in N[H]$ such that
 $e\in\pcl(A_Hb_H,N[H])$ and
 $e\in N_{\pi(J)}[H]$, but
$e\not\in N_{<\pi(s^*)}[H]$. Thus, since $J\subseteq I_{<\pi(s^*)}$, $e\not\in N_J[H]$.

\end{itemize}

However, since  $\{e\}\cup N_J[H]\subseteq N_{\pi(J)}[H]$, we conclude $\rk(b_H/N_J[H]e,N[H])=\infty$.
Combining this with $e\in\pcl(N_J[H]b_H,N[H])$ and $e\not\in N_J[H]$, we contradict `$b_H$ $\rk\ \infty$-catches $N_J[H]$.'
$\qed_{\ref{forceit}}$

\medskip
Finally, as Claim~\ref{forceit} holds for any sufficiently large $s^*\in J$, $b=a_{w^*,n}$ has bounded effect in $N_J$.
This establishes (3), and thus concludes the proof of Proposition~\ref{forcing}. $\qed_{\ref{forcing}}$

\subsection{Many non-isomorphic models in $\At$}  \label{final}

We continue to work under the assumption of Data~\ref{DataA} and the notation there.  
With Proposition~\ref{mainpart} below,  we prove the existence of $2^{\aleph_1}$ non-isomorphic atomic models, each of size $\aleph_1$,
 under the assumption that a countable, transitive model $(M,\epsilon)$ of ZFC exists.  
 The main theorem of this section, Theorem~\ref{bigone}, follows easily from this.

\begin{Proposition}  \label{mainpart}   Assume that a countable, transitive model of ZFC exists.  If we have an instance of Data~\ref{DataA},
then there are atomic models ($N_X:X\subseteq \omega_1$) such that $N_X\not\cong N_Y$ whenever $X\triangle Y$ is stationary.
\end{Proposition}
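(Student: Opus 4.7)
The strategy is to use the c.c.c.\ forcing $(\QQ_{I^S},\le_\QQ)$ from Proposition~\ref{forcing} as a template for constructing atomic models in $V$ directly (rather than in a forcing extension), and then to separate the resulting models using the asymmetry between clauses (2) and (3) of Proposition~\ref{forcing}.

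First, I would pass from arbitrary $X\subseteq\omega_1$ to stationary/costationary $S_X$ in a way preserving stationary symmetric differences: fix a pairwise disjoint family $\{A_\alpha:\alpha<\omega_1\}$ of stationary subsets of $\omega_1$ (Ulam), and set $S_X:=\bigcup_{\alpha\in X}A_\alpha$, adjusting so that $0\notin S_X$ as required by Construction~\ref{IS}. Then $X\triangle Y$ stationary implies $S_X\triangle S_Y$ stationary. Apply Construction~\ref{IS} to obtain $I^X:=I^{S_X}\in\II^*$ with its continuous increasing filtration $\langle J^X_\alpha:\alpha<\omega_1\rangle$.

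Next, I would build $N_X\in V$ by an $\omega_1$-long recursion producing a filter $G_X\subseteq\QQ_{I^X}$ sufficiently generic to realize clauses (1)--(3) of Proposition~\ref{forcing}. Without loss of generality, the countable transitive model $\bar M$ contains a code for the Data of~\ref{DataA}. At stage $\alpha$, the subposet of conditions supported in the countable initial segment $J^X_\alpha$ is a countable complete suborder (via Lemma~\ref{truncate} and Proposition~\ref{compatible}), and I extend the stage-$\alpha$ partial filter by meeting the countably many dense sets from $\bar M$ that are relevant at this stage; this is possible in $V$ by Baire category, and Proposition~\ref{compatible} guarantees the extensions are mutually compatible across stages. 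The c.c.c.\ (Lemma~\ref{ccc}) preserves $\aleph_1^V$, so $N_X:=N_{I^X}[G_X]$ is an atomic model in $\At$ of size $\aleph_1$ satisfying Proposition~\ref{forcing}(1)--(3) with respect to the filtration $\langle J^X_\alpha\rangle$.

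The distinguishing argument is clean. Suppose $X\triangle Y$ is stationary and $f\colon N_X\to N_Y$ is an isomorphism, aiming at a contradiction. Both $\langle N^X_\alpha:=N_{J^X_\alpha}\rangle$ and $\langle N^Y_\alpha:=N_{J^Y_\alpha}\rangle$ are continuous increasing $\omega_1$-chains of countable elementary submodels with unions $N_X$ and $N_Y$, so by standard closure arguments $C:=\{\alpha<\omega_1:f[N^X_\alpha]=N^Y_\alpha\}$ is a club. Choose $\alpha\in C\cap(S_X\triangle S_Y)$, say $\alpha\in S_X\setminus S_Y$. By Construction~\ref{IS}(1), $I^X\setminus J^X_\alpha$ has a minimum element $t$ with $P(t)$; Proposition~\ref{forcing}(2) provides $a_{t,0}\in N_X\setminus N^X_\alpha$ that both $\rk\ \infty$-catches and admits a cofinal chain in $N^X_\alpha$. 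Both properties are preserved by isomorphism, so $f(a_{t,0})\in N_Y\setminus N^Y_\alpha$ inherits them in $N^Y_\alpha$. But since $\alpha\notin S_Y$, Construction~\ref{IS}(2) makes $J^Y_\alpha$ seamless, and Proposition~\ref{forcing}(3) forces $f(a_{t,0})$ to have bounded effect in $N^Y_\alpha$; bounded effect contradicts admitting a cofinal chain, so no such $f$ exists.

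The hard part is Step~2: producing in $V$ a filter $G_X$ meeting enough dense sets to secure clauses (1)--(3) of Proposition~\ref{forcing}, even though $\QQ_{I^X}$ has $V$-cardinality $\aleph_1$. Groups~A--F supply countably many dense sets per initial segment $J^X_\alpha$ and pose no difficulty. Clause (3) is more delicate: the relevant ``no obstruction'' dense sets are defined via the forcing relation (as in Claim~\ref{forceit}) and nominally quantify over uncountably many seamless $J$ and potential witnesses; this is where the countable transitive model $\bar M$ is essential, supplying absolute access to the forcing relation and reducing the essential dense sets at each stage to the countably many living in $\bar M$.
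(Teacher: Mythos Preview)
Your proposal diverges substantially from the paper's argument, and the divergence creates two genuine gaps.

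\textbf{The paper's route.} The paper does \emph{not} build $N_X$ for each $X\subseteq\omega_1$ by a separate $\omega_1$-recursion in $V$. Instead it fixes a \emph{single} stationary/costationary $S\subseteq\omega_1^M$ inside the countable transitive model $M$, constructs $I^S$ and forces with $\QQ_{I^S}$ to obtain $M[G]$ (all of this is countable in $V$, so a generic over $M$ exists). The resulting atomic model $N_I\in M[G]$ is then wrapped in a three-sorted structure $N^*$ (carrying the order $I^S$, the filtration, and a function $F$ recording cofinal sequences), and an infinitary $\tau^*$-formula $\theta$ is isolated that defines $S$ inside $N^*$. Only then does the paper produce the family $(N_X:X\subseteq\omega_1)$ in $V$, by applying Theorem~3.13 of \cite{BLSmanymod} (iterated $M$-normal ultrapowers) to $M[G]$, a countable fragment $L_A$, and $N^*$. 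The ultrapower machinery is what converts the single countable template into $2^{\aleph_1}$ models of size $\aleph_1$ in $V$, together with maps $t_X:\omega_1\to U^{N^*_X}$ controlling which ordinals satisfy $\theta$.

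\textbf{First gap: your Step 2.} You want $G_X\subseteq\QQ_{I^X}$ to meet enough dense sets to secure clause~(3) of Proposition~\ref{forcing}. But the paper's proof of (3), via Claim~\ref{forceit}, is a genuine forcing argument: it invokes the fundamental theorem of forcing to find $p\in G$ forcing that $[x_{w^*,n}]$ $\rk\ \infty$-catches $N_J[\Gdot]$, and then argues by contradiction using a $q\ge p$ forcing the opposite, an automorphism of $I$, and a second generic $H$. This requires $G$ to be generic over a ground model that \emph{contains the full poset} $\QQ_{I^X}$. In your setup $\QQ_{I^X}$ has $V$-cardinality $\aleph_1$, so no countable transitive model contains it, and $G_X$ cannot be generic over $V$. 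Your appeal to $\bar M$ as ``supplying absolute access to the forcing relation'' does not work: $\bar M$ contains neither $I^X$ nor $\QQ_{I^X}$ (nor even $J^X_\alpha$ once $\alpha\ge\omega_1^{\bar M}$), so it cannot compute the $\QQ_{I^X}$-forcing relation. This is precisely the obstacle the paper circumvents by forcing once inside $M$ and then stretching via ultrapowers.

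\textbf{Second gap: the non-isomorphism step.} You assert that ``admits a cofinal chain in $N^X_\alpha$'' is preserved by the isomorphism $f$. It is not: the definition refers to the internal striation $\langle N_s:s\in J^X_\alpha\rangle$, and $f$ need not carry any $N^X_s$ to some $N^Y_{s'}$. (By contrast, ``$\rk\ \infty$-catches'' depends only on the pair $(N^X_\alpha,N_X)$ and does transfer.) The paper handles this by passing to $C=\lim(C_0)$, choosing $\alpha\in C\cap(X\setminus Y)$ together with a sequence $\alpha_n\nearrow\alpha$ from $C_0$, converting the cofinal $s$-chain for $b$ into the statement $\pcl(N^X_{\alpha_n}\cup\{b\})\cap N^X_\alpha\not\subseteq N^X_{\alpha_n}$ for all large $n$, transferring \emph{that} via $f$ (since $f[N^X_{\alpha_n}]=N^Y_{\alpha_n}$), and finally using finite character of $\pcl$ to contradict bounded effect on the $Y$-side. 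Your single-club argument omits this reduction.
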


\begin{proof}  
Choose $N^*,M^*,a^*,\cbar_m,c_m$ witnessing Data~\ref{DataA} and fix  a countable, transitive model $(M,\epsilon)$ of ZFC 
containing these sets, along with $T$ and $\tau$.
We begin by working inside $M$.   In particular, choose $S\subseteq\omega_1^M$ such that $$(M,\epsilon)\models
`\hbox{$S$ is stationary/costationary'}$$  Now perform Construction~\ref{IS} inside $M$ to obtain $I=(I^S,<,P,E)\in\II^*$.

Next, we force with the c.c.c.\ poset $\QQ_{I^S}$ and find $(M[G],\epsilon)$, where $G$ is a generic subset of $\QQ_{I^S}$.
As the forcing is c.c.c., it follows that all cardinals are preserved, as well as
stationarity.  Thus, $\omega_1^{M[G]}=\omega_1^M$ and
$(M[G],\epsilon)\models `\hbox{$S$ is stationary/costationary'}$.  As well, $(I^S)^{M[G]}=I^S$.
According to Proposition~\ref{forcing}, inside $M[G]$ there is an atomic, full $N_I\models T$ that is striated according to $(I^S,<,P,E)$.  Write the universe of $N_I$ as
$\{a_{t,n}:t\in I^S,n\in\omega\}$.  Inside $M[G]$ we have the mapping
$\alpha\mapsto J_\alpha$ given by Construction~\ref{IS}.  For every $\alpha\in\omega_1^{M[G]}$,
let  $N_\alpha$ be the $\tau$-substructure of $N_I$ with universe $\{a_{t,n}:t\in J_\alpha, n\in\omega\}$.  It follows from Proposition~\ref{forcing} and Construction~\ref{IS} that
for every non-zero $\alpha\in\omega_1^{M[G]}$,
\begin{itemize}
\item  $N_\alpha\preceq N_I$;
\item If $\alpha\in S$, then $I^S\setminus J_\alpha$ has a least $E$-class $t/E$ which is topped with $t^*=top(t/E)$, and $a_{t^*,0}$ both $\rk\ \infty$-catches and admits a cofinal chain in $N_\alpha$; and

\item If $\alpha\not\in S$, then every $b\in N_I\setminus N_\alpha$ that $\rk\ \infty$-catches $N_\alpha$ has bounded effect in $N_\alpha$.
\end{itemize}
Now, still working inside $M[G]$, we form a 3-sorted structure $N^*$ that encodes this information.  The language of $N^*$\footnote{Note the inclusion of $F$!} will be
$$\tau^*=\tau\cup\{U,V,W,<_U,<_V,P,E,R_1,R_2,F\}$$
$N^*$ is the $\tau^*$-structure in which
\begin{itemize}
\item $\{U,V,W\}$ are unary predicates that partition the universe;
\item  $(U^{N^*},<_U)$ is  $(\omega_1^{M[G]},<)$;
\item  $(V^{N^*},<_V,P,E)$ is $(I^S,<,P,E)$;
\item  $W^{N^*}$ is $N_I$ (the functions and relations in $\tau$ only
act on the $W$-sort);
\item  $R_1\subseteq U\times V$, with $R_1(\alpha,t)$ holding if and only if
$t\in J_\alpha$;
\item $R_2\subseteq U\times W$, with $R_2(\alpha,b)$ holding if and only if
$b\in N_\alpha$; and
\item  $F:W\times U\times\omega\rightarrow V$ satisfies:  For every $b$ and for every limit ordinal $\alpha$, $\<F(b,\alpha,n):n\in\omega\>$ is a strictly increasing, cofinal sequence in
$J_\alpha$.
\end{itemize}

Note that because of $E$, $S\subseteq\omega_1^{M[G]}$ is an $\tau^*$-definable subset of the $U$-sort of $N^*$.  Also, on the $W$-sort, the relation `$b\in\pcl(\abar)$' is definable by an infinitary $\tau^*$-formula.  Thus,
the relations `$b$ $\rk\ \infty$-catches $N_\alpha$', `$b$ has bounded effect in $N_\alpha$', and `$\<F(b,\alpha,n):n\in\omega\>$ witnesses that $b$ admits a cofinal chain'
are all infinitarily $\tau^*$-definable subsets of $U\times W$.

By construction, $N^*\models\psi$, where the infinitary $\psi$ asserts:
\begin{quotation}  
\noindent For every non-zero $\alpha\in U$, {\bf either} there is an element of $W^{N^*}$ that $\rk\ \infty$-catches $N_\alpha$ and admits a cofinal chain in $N_\alpha$ {\bf or}
every element of $W^{N^*}$ that $\rk\ \infty$-catches $N_\alpha$ has bounded effect in $N_\alpha$.
\end{quotation}

Fix
an infinitary $\tau^*$-formula $\theta(x)$ such that for $x$ from the $U$-sort, $\theta(x)$ holds if and only if there exists $b\in N_I\setminus N_{J_x}$
that $\rk\ \infty$-catches and has a cofinal chain in $N_{J_x}$.
Thus, for  $\alpha\in\omega_1^{M[G]}$ we have  $$N^*\models\theta(\alpha)\quad \Longleftrightarrow\quad \alpha\in S$$
Fix a countable fragment $L_A$ of $L_{\omega_1,\omega}(\tau^*)$ to include the formulas mentioned above, along with infinitary formulas ensuring
$\tau$-atomicity.

Now, we switch our attention to $V$.  By applying Theorem~3.13 of \cite{BLSmanymod},
which is proved by the method of iterated $M$-normal ultrapowers,  to
$(M[G],\epsilon)$, $L_A$, and $N^*$,   we obtain (in $V$!)  a family $(M_X,E)$
of elementary extensions of $(M[G],\epsilon)$, each of size $\aleph_1$, indexed
by subsets $X\subseteq\omega_1$ ($=\omega_1^V$).  Each of these models of
ZFC has an $L^*$-structure, which we call $N^*_X$ inside it.  As well, for each $X\subseteq\omega_1$, there is a continuous,
strictly increasing mapping $t_X:\omega_1\rightarrow U^{N^*_X}$ with the property
that $$N^*_X\models\theta(t_X(\alpha))\quad\Longleftrightarrow\quad\alpha\in X$$
Let $(I^X,<,E,P)$ be the `$V$-sort' of $N^*_X$.  Clearly, each $I^X\in\II^*$.

Finally, the $W$-sort of each $\tau^*$-structure $N^*_X$ is an $\tau$-structure, striated by $I^X$.
We call this `reduct' $N_X$.  Note that by our choice of $L_A$ and the fact
that $N^*_X\succeq_{L_A} N^*$, we know that every $\tau$-structure $N_X$ is an atomic
model of $T$, which is easily seen to be of cardinality $\aleph_1$.

Thus, it suffices to prove that 
 there is no
$\tau$-isomorphism $f:N_X\rightarrow N_Y$ whenever $X\triangle Y$ is stationary.
For this, choose  $X,Y\subseteq\omega_1$ such that $X\setminus Y$ is stationary and by way of contradiction assume that $f:N_X\rightarrow N_Y$ were an $\tau$-isomorphism.
Each of $N_X,N_Y$ has its `expansion' to $L^*$-structures $N^*_X$ and $N^*_Y$, respectively.  As notation, for each $\alpha\in\omega_1^V$,
let $N^X_\alpha$ and $N^Y_\alpha$ denote $\tau$-elementary substructures
with universes of $R_2(t_X(\alpha),N^*_X)$ and $R_2(t_Y(\alpha),N^*_Y)$,
respectively.

Next, choose a club $C_0\subseteq\omega_1$ such that for every $\alpha\in C_0$:
\begin{itemize}
\item $\alpha$ is a limit ordinal;
\item  The restriction of $f:N_\alpha^X\rightarrow N_\alpha^Y$ is a $\tau$-isomorphism.
\end{itemize}

Put $C:=\lim(C_0)$.
As $C$ is club and $(X\setminus Y)$ is stationary, choose $\alpha$ in their intersection.  Fix a strictly increasing $\omega$-sequence $\<\alpha_n:n\in\omega\>$ of elements from $C_0$ converging to $\alpha$.
As $\alpha\in X$, we can choose an element $b\in N_X\setminus N^X_\alpha$
and a strictly increasing sequence $\<s_m:m\in\omega\>$ converging to $\alpha$ such that
$b$  $\rk\ \infty$-catches $N^X_\alpha$ and for every $m\in\omega$
$$\pcl(N^X_{s_m}\cup\{b\})\cap N^X_{s_{m+1}}\not\subseteq N^X_{s_m}.$$
As the sets $J_{\alpha_n}$ are all proper initial segments of $J_\alpha$ with $\bigcup J_{\alpha_n}=J_\alpha$, there is an integer $k$
such that for all $n\ge k$, there is an integer $m(n)$ such that $s_m\in J_{\alpha_n}$, but $s_{m+1}\not\in J_{\alpha_n}$.
Thus, for any $n\ge k$,
$$\pcl(N^X_{\alpha_n}\cup\{b\})\cap N^X_\alpha\not\subseteq N^X_{\alpha_n}.$$

But now, as `$b\in\pcl(\abar)$' is preserved under $\tau$-isomorphisms and $f[N^X_{\alpha_n}]=N^Y_{\alpha_n}$ setwise,
we have that $f(b)$   $\rk\ \infty$-catches $N^Y_\alpha$, but for every
$n\ge k$,
$$\pcl(N^Y_{\alpha_n}\cup\{f(b)\})\cap N^Y_\alpha\not\subseteq N^Y_{\alpha_n}.$$
From this, as $\pcl$ is finitely based, it follows easily that for every
$s^*\in J_\alpha$,  there is $s\in J_\alpha$, $s>s^*$ such that
$$\pcl(N^Y_{s}\cup\{f(b)\},N_Y)\cap N^Y_{\alpha}\neq N^Y_{s}$$
That is, $f(b)$ does not have bounded effect in $N^Y_\alpha$.
As $\alpha\not\in Y$, we obtain a contradiction from  $N^*_Y\models\neg\theta(t_Y(\alpha))$ and $N^*_Y\models\psi$.
\end{proof}  

\begin{Theorem}  \label{bigone}  If $\At$ has $<2^{\aleph_1}$ non-isomorphic atomic models of size $\aleph_1$, then $\At$ is ranked.
\end{Theorem}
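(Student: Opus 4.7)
The plan is to prove the contrapositive: assuming $\At$ is not ranked, I will produce $2^{\aleph_1}$ pairwise non-isomorphic atomic models of cardinality $\aleph_1$. The key preparatory step is to convert the failure of ranking into a concrete instance of Data~\ref{DataA}. By hypothesis some $\tp(d/a)\in\P$ has $\rk(d/a)=\infty$. After absorbing $a$ into the signature as constants and, if necessary, adjoining a further constant to guarantee $\pcl(\emptyset)\neq\emptyset$, I may assume $a=\emptyset$ and rename $d$ as $a^*$. Proposition~\ref{charinfty} then supplies a better $a^*/\emptyset$-chain $\langle(M_n,c_n):n\in\omega\rangle$ with union $M^*$ inside a countable atomic $N^*$; finite tuples $\dbar_m\subseteq M_m$ witnessing $c_{m+1}\in\pcl(a^*\dbar_m,N^*)$ exist because pseudo-closure is finitely based. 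Clauses (1)--(3) of Data~\ref{DataA} then follow from properties (5) and (6) of a better chain.

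With the Data in hand, I would invoke Proposition~\ref{mainpart} to obtain a family $(N_X:X\subseteq\omega_1)$ of atomic models of size $\aleph_1$ with $N_X\not\cong N_Y$ whenever $X\triangle Y$ is stationary. To pass from this to $2^{\aleph_1}$ pairwise non-isomorphic models, I would use an Ulam matrix (equivalently Solovay's partition theorem) to choose $\aleph_1$ pairwise disjoint stationary subsets $\{S_\alpha:\alpha<\omega_1\}$ of $\omega_1$ and then form the indexing sets $X_A:=\bigcup_{\alpha\in A}S_\alpha$ for $A\subseteq\omega_1$. Any distinct $A,A'\subseteq\omega_1$ yield $X_A\triangle X_{A'}$ containing some entire $S_\alpha$, hence stationary, so the $2^{\aleph_1}$ models $\{N_{X_A}:A\subseteq\omega_1\}$ are pairwise non-isomorphic, as required.

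The hard part will be eliminating the hypothesis of Proposition~\ref{mainpart} that a countable transitive model of ZFC exists, since Theorem~\ref{bigone} is a theorem of ZFC alone. The remedy is the usual reflection argument: the proof of Proposition~\ref{mainpart} uses only finitely many axioms $\mathrm{ZFC}^*\subseteq\mathrm{ZFC}$, namely those needed to define the c.c.c.\ poset $\QQ_I$, take a c.c.c.\ generic extension, invoke the $\Delta$-system lemma and Reflection inside the extension, and run the iterated $M$-normal ultrapower from Theorem~3.13 of \cite{BLSmanymod}. By the Reflection Theorem applied in ZFC, there is a countable transitive $(M,\in)\models\mathrm{ZFC}^*$ that contains as elements all of the finite parameters $T,\tau,N^*,M^*,a^*,\langle\dbar_m,c_m:m\in\omega\rangle$. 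Running the proof of Proposition~\ref{mainpart} with this $M$ in place of a full transitive model of ZFC still yields (in $V$) the family $(N_X)_{X\subseteq\omega_1}$, because the iterated ultrapower step is carried out on the $V$-side and outputs genuine $\tau$-structures in $V$, while the distinguishability argument at the end of Proposition~\ref{mainpart} uses only the $L_A$-elementary extension $N^*\preceq N^*_X$, which is preserved by the reflection reduction. This completes the proof of Theorem~\ref{bigone} in ZFC.
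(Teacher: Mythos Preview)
Your proof is correct and follows essentially the same route as the paper: derive Data~\ref{DataA} from Proposition~\ref{charinfty}, invoke Proposition~\ref{mainpart}, and eliminate the hypothesis of a countable transitive model of ZFC via the Reflection Theorem applied to the finite fragment of ZFC actually used. Your added explicit step of using a Solovay partition of $\omega_1$ into $\aleph_1$ disjoint stationary sets to pass from ``$N_X\not\cong N_Y$ whenever $X\triangle Y$ is stationary'' to a genuine family of $2^{\aleph_1}$ pairwise non-isomorphic models is a point the paper leaves implicit, so spelling it out is a minor improvement in exposition rather than a different approach.
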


\begin{proof}  Assume that $\At$ is not ranked.   By Proposition~\ref{charinfty} we obtain a witness to Data~\ref{DataA}.
As the proof of Proposition~\ref{mainpart} is finite, the hypotheses there can be weakened to the existence of
a countable, transitive model of a large enough, finite subset of ZFC.  However, the existence of such a countable, transitive model
is provable from ZFC itself (using the Reflection Theorem).
Thus, the existence of $2^{\aleph_1}$ non-isomorphic atomic models of size $\aleph_1$ is provable in ZFC alone.
\end{proof}

\end{document}